\newtheorem{thm}{Theorem}[section]
\newtheorem{prop}[thm]{Proposition}
\newtheorem{lem}[thm]{Lemma}
\newtheorem{rmk}[thm]{Remark}
\newtheorem{defn}{Definition}[section]
\numberwithin{equation}{section}
\newcommand{\sgn}{\text{sgn}}
\newcommand{\ord}{\text{ord}}
\newcommand{\N}{{\mathbb{N}}}
\newcommand{\z}{{\mathbb{Z}}}
\newcommand{\floor}[1]{\left\lfloor #1 \right\rfloor}
\newcommand{\ceil}[1]{\left\lceil #1 \right\rceil}
\newcommand{\eq}{\equiv}
\title[Regular $m$-gonal forms]{Regular $m$-gonal forms}
\author{BYEONG MOON KIM and Dayoon Park}
\address{Department of Mathematics, Kangnung National University, Kangnung, 210-702, Korea}
\email{kbm@kangnung.ac.kr}
\thanks{}
\address{Department of Mathematics, The University of Hong Kong, Hong Kong}
\email{pdy1016@hku.hk}
\thanks{}
\begin{document}
\maketitle

\begin{abstract}
In this paper, we show that for a fixed rank $n$, there are only finitely many $m$ for which there is a regular $m$-gonal form of rank $n$ and determine every type of the (generalized) regular $m$-gonal form for every sufficiently large $m$.
\end{abstract}

\maketitle
\section{Introduction}
\vskip 0.3cm
\vskip 0.3cm
The {\it $x$-th $m$-gonal number} is defined as the total number of dots to constitute a regular $m$-gon with $x$ dots for each side, which follows the formula 
\begin{equation}\label{m-number}P_m(x)=\frac{m-2}{2}x^2-\frac{m-4}{2}x \end{equation}
for $x \in \N$ and $m \ge 3$.
We call a weighted sum of $m$-gonal numbers 
\begin{equation}\Delta_{m,\mathbf a}(\mathbf x):=a_1P_m(x_1)+\cdots+a_nP_m(x_n) \end{equation}
with $\mathbf a \in \N^n$ as {\it $m$-gonal form}.
Representability of positive integer by an $m$-gonal form has been studied for a long time.
Fermat famously conjectured that every positive integer is written as a sum of at most $m$ $m$-gonal numbers, on the other words,
$$\Delta_{m,(1,\cdots,1)}(x_1,\cdots,x_m)=N$$ has a non-negative integer solution $(x_1,\cdots,x_m) \in \N_0^m$ for any $N \in \N$.
The celebrated Lagrange's four square Theorem and Guass's Eureka Theorem resolve his conjecture for $m=4$ and $3$, respectively.
And Cauchy completely proved his conjecture in 1813. 

On the other hand, by admitting the variable $x$ in the formula (\ref{m-number}) zero and negative integers too, we may generalize the $m$-gonal numbers. 
We call a weighted sum of generalized $m$-gonal numbers (i.e., which admit every integer variable $x_i \in \z$)
$$\Delta_{m,\mathbf a}(\mathbf x):=a_1P_m(x_1)+\cdots+a_nP_m(x_n)$$
with $\mathbf a \in \N^n$ as {\it generalized $m$-gonal form}.
We call an $m$-gonal form (resp, generalized $m$-gonal form) $\Delta_{m,\mathbf a}(\mathbf x)$ {\it represents} $N \in \N_0$ when the diophantine equation $$\Delta_{m,\mathbf a}(\mathbf x)  =  N $$ has a non-negative integer solution $\mathbf x \in \N_0^n$ (resp, integer solution $\mathbf x \in \z^n$)
and a (generalized) $m$-gonal form $\Delta_{m,\mathbf a}(\mathbf x)$ {\it locally represents} $N \in \N_0$ when $$\Delta_{m,\mathbf a}(\mathbf x)  \eq  N \pmod{r}$$ has an integer solution $\mathbf x \in \z^n$ for every $r \in \z$, or equivalently, $\Delta_{m,\mathbf a}(\mathbf x)=N$ is solvable over the ring of $p$-adic integers $\z_p$ for every prime $p$.
If a (generalized) $m$-gonal form $\Delta_{m,\mathbf a}(\mathbf x)$ represents $N$, then the (generalized) $m$-gonal form $\Delta_{m,\mathbf a}(\mathbf x)$ well locally represents $N$.
On the other hand, there are a bunch of examples which indicate that the converse does not hold.
Especially, we call a (generalized) $m$-gonal form is {\it regular} if the converse also always holds, i.e., if the (generalized) $m$-gonal form represents every non-negative integer $N \in \N_0$
which is locally represented by the form.
The regular form was firstly studied by Dickson in \cite{D} where the term regular was coined.
To avoid superfluous duplication of (generalized) regular $m$-gonal form, we call a (generalized) $m$-gonal form $\Delta_{m,\mathbf a}(\mathbf x)$ satisfying $(a_1,\cdots,a_n)=1$ is {\it primitive} and we consider only (generalized) primitive regular $m$-gonal forms.
Jones and Pall \cite{JP} classified 102 all (generalized) primitve regular ternary square forms ($4$-gonal forms of rank $3$).
Watson \cite{W1} showed that there are only finitely many primitive positive definite regular ternary quadratic forms up to isometry.
And he suggested a transformation on quadratic form called as {\it Watson's $\Lambda$-transformation} which was orginally designed for a quadratic form having smaller class number and nicer local structure than before transforming.
Since then, the Watson's $\Lambda$-transformation is practically being applied in lots of studying regular forms.
By using the Watson's $\Lambda$-transformation, Jagy, Kaplansky and Schiemann presented 913 actual all candidates for primitive positive definite regular ternary quadratic forms up to isometry and showed that all of them but 22 are indeed regular.
Oh confirmed $8$ more on the list \cite{O1}.
And Lemke Oliver \cite{Oliver} finally proved that the remaining $14$ candidates are regular under the assumption of generalized Riemann Hypothesis.

Chan and Oh \cite{CO} showed that there are only finitely many generalized primitive regular ternary triangular forms.
Chan and Ricci \cite{CR} improved the result by showing that there are only finitely many primitive regular ternary quadratic polynomials with a fixed conductor.
Kim and Oh \cite{M-KO} classified every generalized primitive regular ternary triangular form.
He and Kane \cite{HK} proved that there is only finitely many generalized primitive regular ternary polygonal forms.
Which implies that there are only finitely many $m$ for which there is a generalized regular ternary $m$-gonal form.
And then one may naturally wonder how about quaternary, quinary, senary, $\cdots$?, although it is already well known that there are infinitely many generalized primitive regular polygonal forms of rank $n \ge 4$.

In this paper, we show that for each rank $n$, there are only finitely many $m$ for which there is a (generalized) regular $m$-gonal form of rank $n$ and
completely determine the types of all (generalized) regular $m$-gonal forms of rank $n \ge 4$ for $m\ge 14$ with $m \not\eq 0 \pmod{4}$ and $m \ge 28$ with $m \eq 0 \pmod{4}$ by using the arithmetic theory of quadratic forms.
We conventionally adopt a notation $$\left<a_1,\cdots, a_n\right>$$ for a square form (or a diagonal quadratic form) $\Delta_{4,\mathbf a}$.
Any unexplained notation and terminology can be found in \cite{O}. 
\vskip 1em
This paper is organized as follows.
In Section 2, we introduce a new quadratic polynomial called as a generalized shifted $m$-gonal form.
In Section 3, we study a transformation on generalized shifted $m$-gonal form which is mimicked by Watson's $\Lambda$-trnasformation \cite{W1}
and characterize a local structure of generalized regular $m$-gonal forms.
In Section 4, based on the results of Section 3, we classify every (generalized) regular $m$-gonal form of rank $n \ge 4$ for $m\ge 14$ with $m \not\eq 0 \pmod{4}$ and $m \ge 28$ with $m \eq 0 \pmod{4}$.
The main Theorems in this paper are followings.

\begin{thm} \label{main}
\begin{itemize}
\item[(1)] 
For $m\ge 14$ with $m\not\equiv0 \pmod{4}$ and $m-2\equiv0 \pmod{3}$, a generalized primitive regular shifted $m$-gonal form of rank $n \ge 4$ is universal. 

\vskip 0.7em

\item[(2)] For $m\ge 14$ with $m\not\equiv0 \pmod{4}$ and $m-2\not\equiv0 \pmod{3}$, a generalized primitive regular $m$-gonal form $\Delta_{m,\mathbf a}(\mathbf x)$ is 
\begin{equation}\label{0.notuni/3}\text{generalized universal $m$-gonal form, }\end{equation}
or of the form of
\begin{equation}\label{1.notuni/3}\left<a_1,3a_2',\cdots,3a_n'\right>_m\end{equation}
where $a_1 \in \{1,2\}$ which represents every non-negative integer in $a_1+3\N_0 \cup 3\N_0$.

\vskip 0.7em

\item[(3)] For $m \ge 28$ with $m \equiv0 \pmod{4}$ and $m \equiv 2 \pmod{3}$, a generalized primitive regular $m$-gonal form $\Delta_{m,\mathbf a}(\mathbf x)$ of rank $n\ge4 $ is 
\begin{equation}\label{0.notuni/4m8}\text{generalized universal $m$-gonal form,}\end{equation}
or of the form of
\begin{equation}\label{1.notuni/4m8} \left<a_1,4a_2',\cdots,4a_n'\right>_m\end{equation}
where $a_1 \in \{1,3\}$ which represents every non-negative integer in $a_1+4\N_0 \cup 4\N_0$, 
\begin{equation}\label{3.notuni/4m8}\left<1,3,4a_3',\cdots,4a_n'\right>_m\end{equation}
which represents every non-negative integer in $\N_0 \setminus 2+4\N_0$,
or
\begin{equation}\label{4.notuni/4m8} \left<1,1,4a_3',\cdots,4a_n'\right>_m \end{equation}
which represents every non-negative integer in $\N_0 \setminus 3+4\N_0$.

\vskip 0.7em

\item[(4)] For $m\ge28$ with $m\eq 0\pmod{4}$ and $m \not\eq 2\pmod{3}$, a generalized primitive regular $m$-gonal form $\Delta_{m,\mathbf a}(\mathbf x)$ is 
\begin{equation}\label{0.notuni/4m8,1m3}\text{generalized universal $m$-gonal form,}\end{equation}
or of the form of
\begin{equation}\label{1.notuni/4m8,1m3}\left<a_1,3a_2',\cdots,3a_n'\right>_m\end{equation}
where $a_1 \in \{1,2\}$ which represents every non-negative integer in $a_1+3\N_0 \cup 3\N_0$, 
\begin{equation}\label{3.notuni/4m8,1m3} \left<a_1,4a_2',\cdots,4a_n'\right>_m\end{equation}
where $a_1 \in \{1,3\}$ which represents every non-negative integer in $a_1+4\N_0 \cup 4\N_0$, 
\begin{equation}\label{5.notuni/4m8,1m3}\left<1,3,4a_3',\cdots,4a_n'\right>_m\end{equation}
which represents every non-negative integer in $\N_0 \setminus 2+4\N_0$,
\begin{equation}\label{6.notuni/4m8,1m3}\left<1,1,4a_3',\cdots,4a_n'\right>_m\end{equation}
which represents every non-negative integer in $\N_0 \setminus 3+4\N_0$,
\begin{equation}\label{7.notuni/4m8,1m3}\left<3,4,12a_3',\cdots,12a_n'\right>_m\end{equation}
which represents every non-negative integer $n$ with $n \equiv 0,3,4,7 \pmod{12}$,
\begin{equation}\label{8.notuni/4m8,1m3} \left<3,8,12a_3',\cdots,12a_n'\right>_m \end{equation}
which represents every non-negative integer $n$ with $n \equiv 0, 3,8,11  \pmod{12}$, or
\begin{equation}\label{9.notuni/4m8,1m3}\left<3,3,4,12a_4',\cdots,12a_n'\right>_m\end{equation}
which represents every non-negative integer $n$ with $n \equiv 0, 3,4,6,7,10 \pmod{12}$.
\end{itemize}
\end{thm}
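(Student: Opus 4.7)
The plan is to apply the framework built in Sections 2--3---generalized shifted $m$-gonal forms together with a Watson-style $\Lambda$-transformation---to reduce any primitive regular $m$-gonal form of rank $n\ge 4$ to one of finitely many candidate shapes, and then to verify that each listed candidate is in fact regular. The four cases of the theorem correspond precisely to the four combinations of $m\bmod 4$ and $m\bmod 3$, which is natural since these residues control the $2$-adic and $3$-adic local behavior of $P_m$.

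For the necessity direction (no regular form outside the listed shapes exists), I would start from the local-structure characterization from Section 3, which restricts the $p$-adic shape of the coefficients of any regular generalized $m$-gonal form at the relevant primes $p=2$ and $p=3$. Iterating the $\Lambda$-transformation---which preserves regularity while strictly decreasing a $p$-adic complexity---any regular form reduces to a $\Lambda$-minimal one, and the $\Lambda$-minimal regular forms can then be matched against the local constraints in each of the four cases. In Part (1), where $m\not\equiv 0\pmod 4$ and $m\equiv 2\pmod 3$, the constraints are strong enough to force universality outright; in Part (4), where both $2$-adic and $3$-adic obstructions are present simultaneously, the list of shapes is richest.

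For the sufficiency direction (each listed form is regular), the universal cases follow from a Tartakovsky-style argument for forms of rank $\ge 4$, together with direct verification for the finitely many small exceptional integers. For each non-universal candidate, I would first determine the set of locally represented integers by a mechanical computation modulo $4$ or modulo $12$, and then exhibit an explicit integer representation for every $N$ in that set. The strategy is to pick values for the small-coefficient variables so that the leftover $N-a_1P_m(x_1)-\cdots-a_kP_m(x_k)$ lies in the correct residue class modulo $12$ (so it can be absorbed by the tail form with coefficients divisible by $12$) and is large enough for the tail to represent it positively by an induction on rank.

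The main obstacle will be establishing regularity of the sporadic forms in Part (4), in particular $\left<3,4,12a_3',\cdots,12a_n'\right>_m$, $\left<3,8,12a_3',\cdots,12a_n'\right>_m$, and $\left<3,3,4,12a_4',\cdots,12a_n'\right>_m$, where the congruence class modulo $12$ to be hit is delicate and the small-coefficient variables offer little slack. Producing the required representations will demand a careful case split on $N\bmod{12}$ together with explicit choices of the first two or three variables, matching simultaneously both the residue class and a size lower bound, and this step is where most of the technical work of Section 4 will be concentrated.
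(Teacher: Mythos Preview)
Your proposal misreads what the theorem asserts. The statement is a one-way classification: \emph{if} $\Delta_{m,\mathbf a}$ is primitive and regular of rank $n\ge 4$, \emph{then} it is universal or has one of the listed shapes, and in the latter case its set of (locally, hence globally) represented integers is the indicated residue union. There is no claim that every form of the listed shape is regular, and indeed the paper explicitly leaves the converse open (see Remark~\ref{rank rmk1} and the remark following it). Your entire ``sufficiency direction''---Tartakovsky-type arguments, the case splits on $N\bmod 12$ to build explicit representations for the sporadic forms in Part~(4)---is therefore not part of the theorem and should be dropped. What you identify as ``the main obstacle'' is not an obstacle at all.

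For the necessity direction, your outline is on the right track but misses the actual mechanism. The paper does not iterate the $\Lambda$-transformation until reaching a $\Lambda$-minimal form and then classify those. Instead, the $\lambda_p$-transformations are used \emph{only} to prove Lemma~\ref{lem33}: any primitive regular form of rank $\ge 4$ is already $\z_p$-universal for every prime $p\ne 2,3$ (or $p\ne 3$ when $m\not\equiv 0\pmod 4$). Once that is in hand, the classification in Section~4 works directly with the original coefficients $\mathbf a$, splitting on the number of $\z_2$-units and $\z_3$-units among the $a_i$, and then running Bhargava-style escalation arguments (e.g., the form must represent $a_1+8,\,a_1+16,\dots$, forcing bounds on $a_2,a_3,\dots$) to pin down the shapes. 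Your proposal does not mention this escalation step, which is where all the substantive case analysis lives; ``matched against the local constraints'' is too vague to cover it.
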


\section{generalized shifted $m$-gonal form}
\begin{defn}
We define {\it generalized $x$-th shifted $m$-gonal number of level $r$} as
\begin{equation}P_m^{(r)}(x):=\frac{m-2}{2}x^2-\frac{m-2-2r}{2}x \end{equation}
for $x \in \z$ and integer $r \in [0,m-2]$ satisfying $(r,m-2)=1$.
And we call a weighted sum of generalized shifted $m$-gonal numbers 
\begin{equation}\Delta_{m,\mathbf a}^{(\mathbf r)}(\mathbf x):=a_1P_m^{(r_1)}(x_1)+\cdots+a_nP_m^{(r_n)}(x_n)\end{equation} where $\mathbf a \in \N^n$ as {\it generalized shifted $m$-gonal form}.
\end{defn}

We may see that generalized $m$-gonal number is generalized shifted $m$-gonal number of level $1$ and generalized shifted $m$-gonal numbers have non-negative integer values.
We use notations
$$\left<a_1^{(r_1)},\cdots,a_n^{(r_n)}\right>_m$$ for a generalized shifted $m$-gonal form $\Delta_{m,\mathbf a}^{(\mathbf r)}=\Delta_{m,(a_1,\cdots,a_n)}^{(r_1,\cdots,r_n)}$ and
$$\left<a_1,\cdots,a_n\right>_m:=\left<a_1^{(1)},\cdots,a_n^{(1)}\right>_m$$ for a generalized $m$-gonal form $\Delta_{m,\mathbf a}=\Delta_{m,(a_1,\cdots,a_n)}.$

We call a generalized shifted $m$-gonal form $\Delta_{m,\mathbf a}^{(\mathbf r)}(\mathbf x)$ {\it (globally) represents} $N \in \N_0$ if there is an integer solution $\mathbf x \in \z^n$ for $\Delta_{m,\mathbf a}^{(\mathbf r)}(\mathbf x)=N$ and {\it locally represents} $N \in \N_0$ if there is a $p$-adic integer solution $\mathbf x \in \z_p^n$ for $\Delta_{m,\mathbf a}^{(\mathbf r)}(\mathbf x)=N$ over $\z_p$ for every prime $p$.
And we say that a generalized shifted $m$-gonal form $\Delta_{m,\mathbf a}^{(\mathbf r)}(\mathbf x)$ is {\it regular} if  $\Delta_{m,\mathbf a}^{(\mathbf r)}(\mathbf x)$ (globally) represents every non-negative integer which is locally represented by $\Delta_{m,\mathbf a}^{(\mathbf r)}(\mathbf x)$.
If $\Delta_{m,\mathbf a}^{(\mathbf r)}(\mathbf x)=N$ has a $p$-adic integer solution $\mathbf x \in \z_p^n$, then we call $\Delta_{m,\mathbf a}^{(\mathbf r)}(\mathbf x)$ {\it represents $N \in \z_p$ over $\z_p$} and if $\Delta_{m,\mathbf a}^{(\mathbf r)}(\mathbf x)$ represents every $p$-adic integer $N \in \z_p$ over $\z_p$, then we say that $\Delta_{m,\mathbf a}^{(\mathbf r)}(\mathbf x)$ is {\it universal over $\z_p$}, or simply, {\it $\z_p$-universal}.
If $\Delta_{m,\mathbf a}^{(\mathbf r)}(\mathbf x)$ is $\z_p$-universal for every prime $p$, then we say that $\Delta_{m,\mathbf a}^{(\mathbf r)}(\mathbf x)$ is {\it locally universal} and if $\Delta_{m,\mathbf a}^{(\mathbf r)}(\mathbf x)$ represents every non-negative integer, then we say that $\Delta_{m,\mathbf a}^{(\mathbf r)}(\mathbf x)$ is {\it globally universal}, or simply, {\it universal}.

\begin{rmk}
For each fixed level $r \in [0,m-2]$ (effectively, $r \in (0,m-2)$), except $P_m^{(r)}(0)=0$, there are only two generalized shifted $m$-gonal numbers of level $r$  
$$P_m^{(r)}(1)=r , \  P_m^{(r)}(-1)=m-2-r$$
which are less than $m$ and they satisfy following inequality 
$$\begin{cases}
P_m^{(r)}(1)<\frac{m-2}{2}<P_m^{(r)}(-1) & \text{when } r<\frac{m-2}{2} \\
P_m^{(r)}(-1)<\frac{m-2}{2}<P_m^{(r)}(1) & \text{when } r>\frac{m-2}{2},
\end{cases}
$$
which implies that there is exactly one generalized shifted $m$-gonal numbers of level $r$ which is less than or equal to $\frac{m-2}{2}$ except $P_m^{(r)}(0)=0$.
More generally, for any level $r \in [0,m-2]$, 
$$P_m^{(r)}(-x)+P_m^{(r)}(x)=P_m(-x)+P_m(x)$$ holds for $x \in \N$.
Therefore the number of generalized $m$-gonal numbers of level $r$ which are less than or equal to $\frac{P_m(-x)+P_m(x)}{2}$ except $P_m^{(r)}(0)=0$ would be $2x-1$ for $x \in \N$.

For each fixed $m$, there are $\phi(m-2)$ generalized shifted $m$-gonal numbers having the different levels where $\phi$ is the Euler's phi function.
The generalized shifted $m$-gonal numbers of level $r$ and generalized shifted $m$-gonal numbers of level $m-2-r$ with $(m-2,r)=(m-2,m-2-r)=1$ represent exactly same integers because $P_m^{(r)}(-x)=P_m^{(m-2-r)}(x)$. 
We say that two generalized shifted $m$-gonal numbers $P_m^{(r)}(x)$ and $P_m^{(r')}(x)$ have the {\it same type} if $P_m^{(r)}(x)$ and $P_m^{(r')}(x)$ represents exactly same integers.
We may regard that there are $\frac{\phi(m-2)}{2}$ different types of generalized shifted $m$-gonal numbers for $m \ge 5$.
And we may see that there is only one type of generalized shifted $m$-gonal number which have the same type with the generalized shifted $m$-gonal number of level 1, i.e., generalized $m$-gonal number for $m \in \{3,4,5,6,8\}$ and 
there are at least two different types of the generalized shifted $m$-gonal numbers for $m \not\in \{3,4,5,6,8\}$.
\end{rmk}

\begin{prop} \label{prop1}
Let $\Delta_{m,\mathbf a}^{(\mathbf r)}(\mathbf x)$ be a generalized primitive shifted $m$-gonal form.
\begin{itemize}
\item[(1)] $\Delta_{m,\mathbf a}^{(\mathbf r)}(\mathbf x)$ is $\z_p$-universal for a prime $p$ with $(p,2(m-2)) \not= 1$.
\item[(2)] When $m \not\eq 0\pmod{4}$, $\Delta_{m,\mathbf a}^{(\mathbf r)}(\mathbf x)$ is $\z_2$-universal.
\end{itemize}
\end{prop}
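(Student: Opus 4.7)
The plan is based on a simple observation: by primitivity, some $a_i$ is a $p$-adic unit, so setting $x_j = 0$ for $j \neq i$ reduces the problem to showing that the single-variable map $P_m^{(r_i)} \colon \z_p \to \z_p$ is surjective. Surjectivity will then follow from a direct application of Hensel's Lemma to an appropriate integral polynomial whose zeros in $\z_p$ correspond to the desired preimages.

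For part (1), I would fix an odd prime $p$ with $p \mid m-2$ (the case $p = 2$ is properly handled in part (2)). Choose $i$ so that $a_i \in \z_p^{\times}$, and, given $N \in \z_p$, consider
$$f(X) = (m-2)X^2 - (m-2-2r_i)X - 2N/a_i \in \z_p[X],$$
whose zeros are exactly those $X$ with $a_i P_m^{(r_i)}(X) = N$. The derivative $f'(X) = 2(m-2)X - (m-2-2r_i)$ reduces modulo $p$ to $2r_i$, which is a unit because $p$ is odd and $(r_i, m-2) = 1$. Since $f \bmod p$ is a linear polynomial with unit leading coefficient it has a root modulo $p$, and Hensel's Lemma then lifts this to a root in $\z_p$.

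For part (2), I would take $p = 2$ and split cases on $m \bmod 4$. In both subcases choose $a_i$ odd, possible by primitivity. When $m$ is odd, the polynomial $f(X) := (m-2)X^2 - (m-2-2r_i)X - 2N/a_i$ lies in $\z_2[X]$, its derivative reduces mod $2$ to $-(m-2) \equiv 1$, and $f(0) \equiv 0 \pmod 2$, so Hensel lifts $X_0 = 0$. When $m \equiv 2 \pmod 4$, $(m-2)/2$ is even and $r_i$ is forced to be odd (being coprime to the even number $m-2$); here one uses $g(X) := P_m^{(r_i)}(X) - N/a_i \in \z_2[X]$ directly, whose derivative reduces mod $2$ to $r_i \equiv 1$, and one picks $X_0 \in \{0, 1\}$ so that $g(X_0) \equiv 0 \pmod 2$ (possible since $g(0) = -N/a_i$ and $g(1) = r_i - N/a_i$ have opposite parities), then lifts by Hensel.

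I do not anticipate a real obstacle; the proof is essentially careful bookkeeping. The one subtle point is choosing the right polynomial over $\z_2$: for $m$ odd one must multiply by $2$ to obtain $2$-integral coefficients, whereas for $m \equiv 2 \pmod 4$ one keeps $P_m^{(r_i)}$ itself and relies on $r_i$ being odd. This is exactly why the hypothesis $m \not\equiv 0 \pmod 4$ is needed: when $m \equiv 0 \pmod 4$, $(m-2)/2$ is odd so the derivative $\equiv 0 \pmod 2$ and the single-variable Hensel strategy fails; indeed, $P_m^{(r_i)}$ itself can miss values in $\z_2$ (e.g., $m = 4$, where $P_4^{(1)}(x) = x^2$).
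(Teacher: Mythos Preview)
Your proof is correct and follows essentially the same approach as the paper: reduce to a single variable via primitivity and then show $P_m^{(r_i)}$ is surjective on $\z_p$. For part~(1) the paper likewise invokes Hensel's Lemma; for part~(2) the paper instead writes down the solution via the quadratic formula (checking that the discriminant $(m-2-2r_i)^2+8N(m-2)$ is a square in $\z_2$), but this is only a cosmetic variation on your Hensel argument.
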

\begin{proof}
(1) Since $(r_i,m-2)=1$, we may use the Hensel's Lemma (see 13:9 in \cite{O}) to show that  for any $N \in \z_p$, $$P_m^{(r_i)}(x_i)=\frac{m-2}{2}x_i^2-\frac{m-2-2r_i}{2}x_i=N$$
has a $p$-adic integer solution $x_i \in \z_p$ over $\z_p$.
From the primitivity of $\Delta_{m,\mathbf a}^{(\mathbf r)}(\mathbf x)$, we may obtain that $\Delta_{m,\mathbf a}^{(\mathbf r)}(\mathbf x)$ is universal over $\z_p.$ 

(2) When $m \not\eq 0\pmod{4}$, for any $N \in \z_2$,
$$P_m^{(r_i)}(x_i)=\frac{m-2}{2}x_i^2-\frac{m-2-2r_i}{2}x_i=N$$
has integer solution $x_i = \frac{(m-2-2r_i) - x_i'}{2(m-2)}\in \z_2$ where $x_i' \in \z_2$ is the $2$-adic integer satisfying $(x_i')^2={(m-2-2r_i)^2-8N(m-2)}$.
From the primitivity of $\Delta_{m,\mathbf a}^{(\mathbf r)}(\mathbf x)$, we may obtain that $\Delta_{m,\mathbf a}^{(\mathbf r)}(\mathbf x)$ is $\z_2$-universal. 
\end{proof}

\begin{rmk} \label{rmk1}
By Proposition \ref{prop1}, a generalized primitive shifted $m$-gonal form $\Delta_{m,\mathbf a}^{(\mathbf r)}(\mathbf x)$ represents every $p$-adic integer over $\z_p$ when $p$ is a prime with $(p,2(m-2))\not=1$(resp, $(p,\frac{m-2}{2})\not=1$) when $m\not\eq 0\pmod{4}$(resp, $m \eq 0\pmod{4}$).
On the other hand, the equality
\begin{equation}\label{m4}\Delta_{m,\mathbf a}^{(\mathbf r)}(\mathbf x)=\frac{m-2}{2}\Delta_{4,\mathbf a}(\mathbf x')-\sum_{i=1}^n a_i \frac{(m-2-2r_i)^2}{8(m-2)}
\end{equation}
where $\mathbf x'=\mathbf x-\left(\frac{m-2-2r_1}{2(m-2)},\cdots,\frac{m-2-2r_n}{2(m-2)}\right)$ may help us to characterize the $p$-adic integers which are represented by $\Delta_{m,\mathbf a}^{(\mathbf r)}(\mathbf x)$ over $\z_p$ for a prime $p$ with $(p,2(m-2))=1$ (resp, $(p,\frac{m-2}{2})=1$) when $m\not\eq 0\pmod{4}$ (resp, $m \eq 0\pmod{4}$) with some well known facts on the theory of quadratic form over local ring for $\frac{m-2-2r_i}{2(m-2)}, \frac{(m-2-2r_i)^2}{8(m-2)} \in \z_p$ for all $i$. 
The equality (\ref{m4}) suggests that the $\z_p$-universality of the generalized shifted $m$-gonal form $\Delta_{m,\mathbf a}^{(\mathbf r)}(\mathbf x)$ and the $\z_p$-universality of the diagonal quadratic form $\Delta_{4,\mathbf a}(\mathbf x)$ are equivalent for a prime $p$ with $(p,2(m-2))=1$ (resp, $(p,\frac{m-2}{2})=1$) when $m\not\eq 0\pmod{4}$ (resp, $m \eq 0\pmod{4}$).
\end{rmk}

\vskip 0.5em

\section{$\Lambda_p$-transformation}
For a generalized shifted $m$-gonal form $\Delta_{m,\mathbf a}^{(\mathbf r)}(\mathbf x)$ and an odd prime $p$ with $(p,(m-2))=1$,
we define a set
$$\Lambda_p(\Delta_{m,\mathbf a}^{(\mathbf r)}):=\{\mathbf x \in \z^n|\Delta_{m,\mathbf a}^{(\mathbf r)}(\mathbf x+\mathbf y) \eq \Delta_{m,\mathbf a}^{(\mathbf r)}(\mathbf x-\mathbf y) \pmod{p} \text{ for all } \mathbf y \in \z^n\}$$
and when $m \eq 0\pmod{4}$, we define a set
$$\Lambda_2(\Delta_{m,\mathbf a}^{(\mathbf r)}):=\{\mathbf x \in \z^n|\Delta_{m,\mathbf a}^{(\mathbf r)}(\mathbf x+\mathbf y) \eq \Delta_{m,\mathbf a}^{(\mathbf r)}(\mathbf x-\mathbf y) \pmod{8} \text{ for all } \mathbf y \in \z^n\}.$$
Then we have that
$$\Lambda_p(\Delta_{m,\mathbf a}^{(\mathbf r)})=\left\{(\ell(x_1),\cdots,\ell(x_n))|x_i \in \z\right\}$$
where $$\ell(x_i)=\begin{cases}px_i+j_i & \text{if } a_i \in \z_p^{\times} \\ x_i & \text{if } a_i \in p \z_p \end{cases}$$ 
for the unique integer $j_i$ satisfying 
$$\begin{cases}
-\frac{p}{2}+\frac{m-2-2r_i}{2(m-2)}\le j_i \le \frac{p}{2}+\frac{m-2-2r_i}{2(m-2)} \\
j_i \eq \frac{m-2-2r_i}{2(m-2)} \pmod{p}.
\end{cases}$$
Without loss of generality, let $a_1,\cdots,a_t \in \z_p^{\times}$ and $a_{t+1},\cdots,a_n \in p\z_p$, we may see that for $(\ell(x_1),\cdots,\ell(x_n)) \in \Lambda_p(\Delta^{(\mathbf r)}_{m,\mathbf a})$,
\begin{equation}\label{lambdaeq;p'}
\Delta_{m,\mathbf a}^{(\mathbf r)}(\ell(x_1),\cdots,\ell(x_n))=\sum_{i=1}^tp^2a_iP_m^{(r_i')}(x_i)+\sum_{i=t+1}^na_iP_m^{(r_i)}(x_i)+C
\end{equation}
where $r_i'=\frac{(m-2)(p+2j_i)-(m-2-2r_i)}{2p}$ 
 and $C=a_1P_m^{(r_1)}(j_1)+\cdots +a_tP_m^{(r_t)}(j_t)$.
Since $r_i'$ is an integer in $[0,m-2]$ which is relatively prime with $m-2$, the quadratic polynomial $\sum_{i=1}^tp^2a_iP_m^{(r_i')}(x_i)+\sum_{i=t+1}^na_iP_m^{(r_i)}(x_i)$ in (\ref{lambdaeq;p'}) without the constant $C$ is also a generalized shifted $m$-gonal form.
We define the generalized shifted $m$-gonal form in (\ref{lambdaeq;p'}) after 
scaling with some suitable rational integer for primitivity as
\begin{equation}\label{lambdaeq;p}
\lambda_p\left(\Delta_{m,\mathbf a}^{(\mathbf r)} \right)(\mathbf x):=\frac{1}{p^k}\left(\sum_{i=1}^tp^2a_iP_m^{(r_i')}(x_i)+\sum_{i=t+1}^na_iP_m^{(r_i)}(x_i)\right)
\end{equation}
where $k=\min\{2,\ord_p(a_{t+1}),\cdots,\ord_p(a_{n})\}$.

\vskip1em

\begin{lem} \label{lambda is regular}
Let $\Delta_{m,\mathbf a}^{(\mathbf r)}(\mathbf x)$ be a generalized primitve regular shifted $m$-gonal form. 
\begin{itemize}
\item[(1)] If $\Delta_{m,\mathbf a}^{(\mathbf r)}(\mathbf x)$ is not $\z_p$-universal for some odd prime $p$, then its $\lambda_p$-transformation
$\lambda_p (\Delta_{m,\mathbf a}^{(\mathbf r)} )(\mathbf x)$ is regular.
\item[(2)] For $m \eq 0 \pmod{4}$, if $\left<a_1,\cdots,a_n\right>\otimes \z_2 \not\supset \left<u,u'\right>\otimes \z_2$ for any $u \eq 1 \pmod{3}$ and $u' \eq 3 \pmod{4}$, then its $\lambda_2$-transformation
$\lambda_2 (\Delta_{m,\mathbf a}^{(\mathbf r)} )(\mathbf x)$ is regular.
\end{itemize}
\end{lem}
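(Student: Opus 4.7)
The plan is to show that any integer $N$ locally represented by $\lambda_p(\Delta_{m,\mathbf a}^{(\mathbf r)})$ is globally represented, by using the identity (\ref{lambdaeq;p'}) to pull a local representation of $N$ back to a local representation of $p^k N + C$ by the original form $\Delta_{m,\mathbf a}^{(\mathbf r)}$, applying regularity to obtain a global representation $\mathbf z \in \z^n$, and then arguing that the exceptional hypothesis forces $\mathbf z \in \Lambda_p(\Delta_{m,\mathbf a}^{(\mathbf r)})$. Once $\mathbf z = (\ell(y_1), \ldots, \ell(y_n))$, the definition (\ref{lambdaeq;p}) of $\lambda_p(\Delta_{m,\mathbf a}^{(\mathbf r)})$ immediately yields $\lambda_p(\Delta_{m,\mathbf a}^{(\mathbf r)})(\mathbf y) = N$.

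For part (1), I would first extract the structural consequence of non-$\z_p$-universality. By Remark \ref{rmk1}, this transfers to non-$\z_p$-universality of the diagonal quadratic form $\langle a_1, \ldots, a_n \rangle$. Using standard $\z_p$-theory (for odd $p$, any ternary-or-higher unit diagonal form, as well as any isotropic binary unit form, is $\z_p$-universal), I obtain $t := |\{i : a_i \in \z_p^\times\}| \leq 2$, and if $t = 2$ the binary unit block $\langle a_1, a_2 \rangle$ is anisotropic over $\mathbb{F}_p$. Next, the local pullback: for primes $q \neq p$, $p$ is a $q$-adic unit so (\ref{lambdaeq;p'}) converts any $\z_q$-solution of $\lambda_p(\Delta_{m,\mathbf a}^{(\mathbf r)})(\mathbf y) = N$ into one of $\Delta_{m,\mathbf a}^{(\mathbf r)}(\mathbf x) = p^k N + C$, while at $q = p$ the same identity works via the map $y_i \mapsto \ell(y_i)$ (a bijection $\z_p \to j_i + p\z_p$ on unit slots and the identity elsewhere). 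Regularity of $\Delta_{m,\mathbf a}^{(\mathbf r)}$ then yields $\mathbf z \in \z^n$ with $\Delta_{m,\mathbf a}^{(\mathbf r)}(\mathbf z) = p^k N + C$.

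The crux is to show $\mathbf z \in \Lambda_p(\Delta_{m,\mathbf a}^{(\mathbf r)})$. Using (\ref{m4}) to complete the square and noting that $j_i \equiv \alpha_i := \frac{m-2-2r_i}{2(m-2)} \pmod p$ while $a_i \in p\z_p$ for $i > t$, the congruence $\Delta_{m,\mathbf a}^{(\mathbf r)}(\mathbf z) \equiv C \pmod p$ reduces to
\[\sum_{i=1}^{t} a_i\, (z_i - \alpha_i)^2 \equiv 0 \pmod p.\]
This forces $z_i \equiv j_i \pmod p$ for every $i \leq t$: vacuously when $t = 0$, from the single-variable square when $t = 1$, and from anisotropy of $\langle a_1, a_2 \rangle$ over $\mathbb{F}_p$ when $t = 2$. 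Hence $\mathbf z \in \Lambda_p(\Delta_{m,\mathbf a}^{(\mathbf r)})$, and the argument concludes as above.

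For part (2), the same four-step scheme applies with $p = 2$, except that the mod-$p$ analysis is replaced by a mod-$8$ analysis, in keeping with the refined definition of $\Lambda_2$. The hypothesis $\langle a_1, \ldots, a_n \rangle \otimes \z_2 \not\supset \langle u, u' \rangle \otimes \z_2$ plays the role of non-$\z_p$-universality, excluding exactly those $\z_2$-Jordan configurations that could allow a vector outside $\Lambda_2(\Delta_{m,\mathbf a}^{(\mathbf r)})$ to hit the residue class of $C$ modulo $8$. The expected main obstacle lies here: one must run through the possible $\z_2$-Jordan decompositions of $\langle a_1, \ldots, a_n \rangle$ and verify in each case that the binary-sub-form exclusion leaves no escape route, which is considerably more delicate than the odd-prime analysis.
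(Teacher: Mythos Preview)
Your proposal is correct and follows essentially the same route as the paper: reduce to the unscaled form, use non-$\z_p$-universality (resp.\ the $\langle u,u'\rangle$-exclusion) to bound the number $t$ of unit coefficients and force anisotropy of the unit block, pull a local representation of $N$ back to one of $p^kN+C$ by the identity (\ref{lambdaeq;p'}), apply regularity, and then use the congruence $\sum_{i\le t} a_i(z_i-\alpha_i)^2\equiv 0\pmod p$ (resp.\ a mod-$8$ analysis) to land in $\Lambda_p$. The paper likewise treats part~(2) only by the case split on $t\in\{1,2,3\}$ with the structural consequences $a_1\equiv\cdots\equiv a_t\pmod 4$ and $a_{t+1},\ldots,a_n\in 4\z_2$, and omits the details.
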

\begin{proof}
(1) Without loss of generality, let $0=\ord_p(a_1) \le \cdots \le \ord_p(a_n)$. 
It is enough to show that the generalized shifted $m$-gonal form 
$$\lambda'_p(\Delta_{m,\mathbf a}^{(\mathbf r)})(\mathbf x):=p^k\lambda_p(\Delta_{m,\mathbf a}^{(\mathbf r)})(\mathbf x)=\sum_{i=1}^tp^2a_iP_m^{(r_i')}(x_i)+\sum_{i=t+1}^na_iP_m^{(r_i)}(x_i)$$ in (\ref{lambdaeq;p}) before scaling is regular.
Note that the generalized shifted $m$-gonal form $\Delta_{m,\mathbf a}^{(\mathbf r)}(\mathbf x)=a_1P_m^{(r_1)}(x_1)+\cdots+a_nP_m^{(r_n)}(x_n)$ is written as
\begin{equation}\label{3.1}\frac{m-2}{2} \left\{a_1\left(x_1-\frac{m-2-2r_1}{2(m-2)}\right)^2+\cdots+a_n\left(x_n-\frac{m-2-2r_n}{2(m-2)}\right)^2\right\}-C\end{equation}
where $C=a_1\frac{(m-2-2r_1)^2}{8(m-2)}+\cdots+a_n\frac{(m-2-2r_n)^2}{8(m-2)} \in \z_p$.
Since $\Delta_{m,\mathbf a}^{(\mathbf r)}(\mathbf x)$ is not $\z_p$-universal, we have $t \le 2$ because otherwise (i.e., if  $0=\ord_p(a_1)=\ord_p(a_2)=\ord_p(a_3)$), the diagonal quadratic form 
$$\Delta_{4,\mathbf a} (\supset \Delta_{4,(a_1,a_2,a_3)}=\left<a_1,a_2,a_3\right>) $$
is $\z_p$-univesal, which implies that $\Delta_{m,\mathbf a}^{(\mathbf r)}(\mathbf x)$ is $\z_p$-universal by Remark \ref{rmk1}.

First we assume that $t=1$.\\
Let an integer $n$ be locally represented by $\lambda'_p(\Delta_{m,\mathbf a}^{(\mathbf r)})(\mathbf x)$.
From the equality
\begin{align*}
P_m^{(r_1)}(py+j_1) & =\frac{m-2}{2}(py+j_1)^2-\frac{m-2-2r_1}{2}(py+j_1)\\
& =p^2\left(\frac{m-2}{2}y^2+\frac{2(m-2)j_1-(m-2-2r_1)}{2p}y\right)+P_m^{(r_1)}(j_1)\\
& =p^2P_m^{(r_1')}(y)+P_m^{(r_1)}(j_1),
\end{align*}
we have that the integer $n+a_1P_m^{(r_1)}(j_1)$ is locally represented by $\Delta_{m,\mathbf a}^{(\mathbf r)}(\mathbf x)$.
From the regularity of $\Delta_{m,\mathbf a}^{(\mathbf r)}(\mathbf x)$, $n+a_1P_m^{(r_1)}(j_1)$ may be written as the $$\Delta_{m,\mathbf a}^{(\mathbf r)}(\mathbf x)=a_1P_m^{(r_1)}(x_1)+\cdots+a_nP_m^{(r_n)}(x_n)$$
for some $x_i \in \z$.
On the other hand, by an argument of congruence modulo $p$, we may obtain that $x_1 \eq j_1 \pmod{p}$, i.e., $x_1=py_1+j_1$ for some $y_1 \in \z$.
Therefore, we get that
\begin{align*}
n+a_1P_m^{(r_1)}(j_1) & =a_1P_m^{(r_1)}(py_1+j_1)+\cdots+a_nP_m^{(r_n)}(x_n) \\
 & =p^2a_1P_m^{(r_1')}(y_1)+a_1P_m^{(r_1)}(j_1)+a_2P_m^{(r_2)}(x_2)+\cdots+a_nP_m^{(r_n)}(x_n), 
\end{align*}
yielding $$n=p^2a_1P_m^{(r_1')}(y_1)+a_2P_m^{(r_2)}(x_2)+\cdots+a_nP_m^{(r_n)}(x_n).$$
Which implies that $\lambda_p'(\Delta_{m,\mathbf a}^{(\mathbf r)})(\mathbf x)$ is regular.

Next we assume that $t=2$. \\
From the assumption that $\Delta_{m,\mathbf a}^{(\mathbf r)}(\mathbf x)$ is not universal over $\z_p$, we have that the unimodular diagonal quadratic form $$\left<a_1,a_2\right>\otimes \z_p$$ is anisotropic because otherwise, 
we get a contradiction to our assumption that $\Delta_{m,\mathbf a}^{(\mathbf r)}(\mathbf x)$ is not $\z_p$-universal from Remark \ref{rmk1}. 
Similarly with the case of $t=1$, we may obtain that for an integer $n$ which is locally represneted by $\lambda'_p(\Delta_{m,\mathbf a}^{(\mathbf r)})(\mathbf x)$, the integer $n+a_1P_m^{(r_1)}(j_1)+a_2P_m^{(r_2)}(j_2) $ is written as $$a_1P_m^{(r_1)}(px_1+j_1)+a_2P_m^{(r_2)}(px_2+j_2)+a_3P_m^{(r_3)}(x_3)+\cdots+a_nP_m^{(r_n)}(x_n),$$
for some $x_i \in \z$, yielding 
$n$ is written as
$$n=p^2a_1P_m^{(r_1')}(x_1)+p^2a_2P_m^{(r_2')}(x_2)+a_3P_m^{(r_3)}(x_3)+\cdots+a_nP_m^{(r_n)}(x_n).$$

(2) Without loss of generality, we assume that $a_1,\cdots, a_t \in \z_2^{\times}$ and $a_{t+1},\cdots, a_n \in 2\z_2$ where $1 \le t \le 3$.
We may have that when $t=2$, $$a_1\eq a_2 \pmod{4} \ \text{ and } \ a_{3},\cdots, a_n \in 4\z_2$$
and when $t=3$, $$a_1\eq a_2 \eq a_3 \pmod{4} \ \text{ and } \ a_{4},\cdots, a_n \in 4 \z_2$$
since $\left<a_1,\cdots,a_n\right> \otimes \z_2 \not\supset \left<u,u'\right> \otimes \z_2$ for any $u \eq 1 \pmod{4}$ and $u' \eq 3 \pmod{4}$.
One may show that the shifted $m$-gonal form $\lambda_2(\Delta_{m,\mathbf a}^{(\mathbf r)})(\mathbf x)$ is regular case by case on the number $t$ of units of $\z_2$ in $\{a_1,\cdots, a_n\}$ similarly with the above (1).
We omit the proof in this paper.
\end{proof}

\vskip 1em

For a generalized primitive shifted $m$-gonal form $\Delta_{m,\mathbf a}^{(\mathbf r)}(\mathbf x)$ of rank $n\ge4$, for an odd prime $p$ with $(p,m-2)=1$, after finite times $\lambda_p$-transformations, we may obtain a generalized primitive regular shifted $m$-gonal form
\begin{equation}\label{lambdas}\lambda_p\circ\cdots \circ \lambda_p(\Delta_{m,\mathbf a}^{(\mathbf r)})(\mathbf x):=a_1'P_m^{(r_1')}(x_1)+\cdots+a_n'P_m^{(r_n')}(x_n)\end{equation} which is $\z_p$-universal with $\z_p$-universal diagonal quadratic form $\left<a_1',\cdots, a_n'\right>$ by Lemma \ref{lambda is regular} (1).
We define {\it $\widetilde{\lambda_p}$-transformation} of $\Delta_{m,\mathbf a}^{(\mathbf r)}(\mathbf x)$ as the generalized primitive regular shifted regular $m$-gonal form of (\ref{lambdas}) 
\begin{equation}\label{Lambda}\widetilde{\lambda_p}(\Delta_{m,\mathbf a}^{(\mathbf r)})(\mathbf x):=\lambda_p\circ\cdots \circ \lambda_p(\Delta_{m,\mathbf a}^{(\mathbf r)})(\mathbf x)\end{equation}
which is $\z_p$-universal.

For a generalized primitive shifted $m$-gonal form $\Delta_{m,\mathbf a}^{(\mathbf r)}(\mathbf x)$ of rank $n\ge4$ with $m \eq 0 \pmod{4}$, after finite times $\lambda_2$-transformations, we may obtain a generalized primitive regular shifted $m$-gonal form
\begin{equation}\label{lambdas2}\lambda_2\circ\cdots \circ \lambda_2(\Delta_{m,\mathbf a}^{(\mathbf r)})(\mathbf x):=a_1'P_m^{(r_1')}(x_1)+\cdots+a_n'P_m^{(r_n')}(x_n)\end{equation} which is 
\begin{equation}\label{lambda2 ex'qf''}
\text{$\z_2$-universal with the $\z_2$-universal $\left<a_1',\cdots, a_n'\right>\otimes \z_2$}
\end{equation}
 or 
\begin{equation}\label{lambda2 ex'qf'}
\text{not $\z_2$-universal with $\left<a_1',\cdots,a_n'\right>\otimes \z_2 \supset \left<u,u'\right>\otimes \z_2$}
\end{equation}  
for some $u \eq 1\pmod{4}$ and $u' \eq 3 \pmod{4}$ by Lemma \ref{lambda is regular} (2).
We define {\it $\widetilde{\lambda_2}$-transformation} of $\Delta_{m,\mathbf a}^{(\mathbf r)}(\mathbf x)$ as the generalized primitive regular shifted regular $m$-gonal form of (\ref{lambdas2}) 
\begin{equation}\label{Lambda}\widetilde{\lambda_2}(\Delta_{m,\mathbf a}^{(\mathbf r)})(\mathbf x):=\lambda_2\circ\cdots \circ \lambda_2(\Delta_{m,\mathbf a}^{(\mathbf r)})(\mathbf x)\end{equation}
which is (\ref{lambda2 ex'qf''}) or (\ref{lambda2 ex'qf'}).

\vskip 1em

\begin{rmk}\label{rmk 2rep}
For a generalized primitive regular shifted $m$-gonal form $\Delta_{m,\mathbf a}^{(\mathbf r)}(\mathbf x)$ of rank $n\ge4$, let its $\widetilde{\lambda_2}$-transformation $\widetilde{\lambda_2}(\Delta_{m,\mathbf a}^{(\mathbf r)})(\mathbf x):=\Delta_{m,\mathbf a'}^{(\mathbf r')}(\mathbf x)$
is not universal over $\z_2$, i.e., the diagonal quadratic form $\left<a_1'\cdots,a_n'\right>\otimes \z_2$ is not universal with 
\begin{equation}\label{lambda2 ex'qf}
\left<a_1',\cdots,a_n'\right> \otimes \z_2 \supset \left<u,u'\right>\otimes \z_2
\end{equation}
for some $u \eq 1 \pmod{4}$ and $u' \eq 3 \pmod{4}$.
Then the generalized primitive regular shifted $m$-gonal form $\widetilde{\lambda_2}(\Delta_{m,\mathbf a}^{(\mathbf r)})(\mathbf x)=\Delta_{m,\mathbf a'}^{(\mathbf r')}(\mathbf x)$ would represent either every non-negative odd integer or every non-negative even integer over $\z_2$.
For with
$$\widetilde{\lambda_2}(\Delta_{m,\mathbf a}^{(\mathbf r)})(\mathbf x)=\Delta_{m,\mathbf a'}^{(\mathbf r')}(\mathbf x)=\frac{m-2}{2}\Delta_{4,\mathbf a'}(\mathbf x')-C$$
where $\mathbf x'=\mathbf x-\left(\frac{m-2-2r_1'}{2(m-2)},\cdots ,\frac{m-2-2r_n'}{2(m-2)}\right)$ and $C=\sum_{i=1}^n a_i'\frac{(m-2-2r_i')^2}{8(m-2)} \in \z_2$, the diagonal quadratic form $\Delta_{4,\mathbf a'}\otimes \z_2$ with (\ref{lambda2 ex'qf})
represents every unit of $\z_2$.
Therefore the generalized primitive shifted $m$-gonal form $\widetilde{\lambda_2}(\Delta_{m,\mathbf a}^{(\mathbf r)})$ would represent every odd integer (or resp, even integer) if $C\in 2\z_2$ (or resp, if $C \in \z_2^{\times}$) over $\z_2$.
Plus, the diagonal quadratic form $\Delta_{4,\mathbf a'}\otimes \z_2$ represents not only units of $\z_2$ but also every non-unit integer in $(a_1'+a_2')+8\z_2$ and $(a_1'+a_2'+a_3')+8\z_2$.
Hence we may induce that the generalized primitive shifted $m$-gonal form $\widetilde{\lambda_2}(\Delta_{m,\mathbf a}^{(\mathbf r)})$ represents every non-negative integer in at least five cosets of $8\z$ over $\z_2$ (among them, four cosets would be in a same coset of $2\z$) and if $\ord_2(a_3)<3$, then every non-negative integer in at least six cosets of $8\z$ 
over $\z_2$. 
\end{rmk}

\vskip 1em

\begin{lem} \label{other q}
Let $\Delta_{m,\mathbf a}^{(\mathbf r)}(\mathbf x)$ be a generalized primitive shifted $m$-gonal form.
\begin{itemize}
\item[(1)]The $\z_q$-universality of $\Delta_{m,\mathbf a}^{(\mathbf r)}(\mathbf x)$ and $\lambda_p(\Delta_{m,\mathbf a}^{(\mathbf r)})(\mathbf x)$ coincide for other prime $q \not=p$.
\item[(2)]If $\Delta_{m,\mathbf a}^{(\mathbf r)}(\mathbf x)$ is not $\z_2$-universal satisfying (\ref{lambda2 ex'qf}) (resp, not satisfying (\ref{lambda2 ex'qf})), then its $\lambda_p$ transformation 
$\lambda_p(\Delta_{m,\mathbf a}^{(\mathbf r)})(\mathbf x)$ is also not $\z_2$-universal satisfying (\ref{lambda2 ex'qf}) (resp, not satisfying (\ref{lambda2 ex'qf})) for an odd prime $p$.
\end{itemize}
\end{lem}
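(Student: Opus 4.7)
The plan is to exploit the fact that for an odd prime $p$ coprime to $m-2$, the $\lambda_p$-transformation only modifies the coefficients of $\Delta_{m,\mathbf a}^{(\mathbf r)}$ by powers of $p$, and these are units in $\z_q$ for every prime $q\neq p$, and in particular in $\z_2$. Thus the $\z_q$-structure of both the shifted $m$-gonal form and its associated diagonal quadratic form $\Delta_{4,\mathbf a}$ is essentially unchanged. Note also that primitivity is preserved by the construction in (\ref{lambdaeq;p}), since for some $i>t$ we have $\ord_p(a_i)=k$, making $p^{-k}a_i$ a $p$-unit, while at primes $q\neq p$ the $p$-factors are units.

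For part (1), I would split on the prime $q$. If $q \mid 2(m-2)$, then both $\Delta_{m,\mathbf a}^{(\mathbf r)}$ and $\lambda_p(\Delta_{m,\mathbf a}^{(\mathbf r)})$ are $\z_q$-universal by Proposition \ref{prop1}. Otherwise Remark \ref{rmk1} (using that when $m\equiv0\pmod 4$ we have $(2,(m-2)/2)=1$) reduces the $\z_q$-universality of the shifted form to that of its associated diagonal quadratic form. Reading off from (\ref{lambdaeq;p}), the diagonal quadratic form associated to $\lambda_p(\Delta_{m,\mathbf a}^{(\mathbf r)})$ is $\langle p^{2-k}a_1,\ldots,p^{2-k}a_t,\,p^{-k}a_{t+1},\ldots,p^{-k}a_n\rangle$. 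The invertible substitution $y_i=p\,x_i$ for $i\le t$, legitimate over $\z_q$, identifies this with the unit scaling $p^{-k}\langle a_1,\ldots,a_n\rangle$, which represents $N\in\z_q$ iff $\langle a_1,\ldots,a_n\rangle$ represents $p^kN\in\z_q$. As $p^k\in\z_q^\times$, $\z_q$-universality is preserved in both directions.

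For part (2), I would apply the analogous observation over $\z_2$, which is legitimate because $p$ is odd. The $q=2$ case of the above argument already shows that $\z_2$-universality is preserved, so it remains to verify that the defining condition of (\ref{lambda2 ex'qf})---existence of a unimodular binary sublattice $\langle u,u'\rangle$ with $u\equiv 1\pmod 4$ and $u'\equiv 3\pmod 4$---is likewise invariant under $\lambda_p$. If $\langle a_1,\ldots,a_n\rangle\otimes\z_2\supset\langle u,u'\rangle\otimes\z_2$, the unit-scaling identification above yields $\langle a_1',\ldots,a_n'\rangle\otimes\z_2\supset\langle p^{-k}u,\,p^{-k}u'\rangle\otimes\z_2$, and regardless of whether $p^{-k}\equiv1$ or $3\pmod 4$, the pair $\{p^{-k}u,\,p^{-k}u'\}$ still realizes the two residue classes $\{1,3\}\pmod 4$. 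The same argument run in reverse shows that when the condition fails for $\Delta_{m,\mathbf a}^{(\mathbf r)}$ it also fails for $\lambda_p(\Delta_{m,\mathbf a}^{(\mathbf r)})$.

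The main technical point, rather than a serious obstacle, is identifying the diagonal quadratic form associated to $\lambda_p(\Delta_{m,\mathbf a}^{(\mathbf r)})$ as a unit rescaling of $\Delta_{4,\mathbf a}$ over $\z_q$; once this identification is in hand, both claims follow directly from Remark \ref{rmk1} together with the symmetry of the pair $\{1,3\}\pmod 4$ under multiplication by units of $\z_2$.
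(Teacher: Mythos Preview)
Your proof is correct and follows the same route as the paper's: reduce to primes $q$ not dividing $2(m-2)$ via Proposition \ref{prop1}, then invoke Remark \ref{rmk1} to pass to the associated diagonal quadratic form $\Delta_{4,\mathbf a'}$, and observe that over $\z_q$ (with $q\neq p$) this differs from $\Delta_{4,\mathbf a}$ only by an invertible change of variables and a unit scaling, which preserves both $\z_q$-universality and the sublattice condition (\ref{lambda2 ex'qf}). Your treatment is somewhat more explicit than the paper's (you spell out the substitution $y_i=p\,x_i$ and the $\{1,3\}\pmod 4$ symmetry for part (2)), but the argument is the same in substance.
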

\begin{proof}
By Proposition \ref{prop1}, it would be enough to show that for primes $q$ with 
\begin{equation}\label{qqq}
\text{$(q,2(m-2))=1$ (resp, $\left(q,\frac{(m-2)}{2}\right)=1$)}
\end{equation}
when $m \not\eq 0\pmod{4}$ (resp, $m \eq 0\pmod{4}$).
For $\lambda_p(\Delta_{m,\mathbf a}^{(\mathbf r)})(\mathbf x):=\Delta_{m,\mathbf a'}^{(\mathbf r')}(\mathbf x)$, we may see that the diagonal quadratic form 
$\Delta_{4,\mathbf a'}$ is isometric with $$\Delta_{4,\mathbf a} \ \text{ or } \ \Delta_{4,p\mathbf a'}$$ over $\z_q$, so the universality of $\Delta_{4,\mathbf a'}$ over $\z_q$ and the universality of $\Delta_{4,\mathbf a}$ over $\z_q$ would coincide.
This completes the claim of (1) by Remark \ref{rmk1} and (2) directly.
\end{proof}

\vskip 1em

\begin{rmk} \label{rmk Lambda}
For a generalized primitive regular shifted $m$-gonal form $\Delta_{m,\mathbf a}^{(\mathbf r)}(\mathbf x)$ of rank $n\ge4$, let $T:=\{p_1,\cdots,p_s\}$ be a set of primes $p$ for which $\Delta_{m,\mathbf a}^{(\mathbf r)}(\mathbf x)$ is not $\z_p$-universal (or resp, not $\z_2$-universal not satsifying (\ref{lambda2 ex'qf})) when $p$ is an odd prime (or resp, $p=2$).
Then $T$ would be a finite set of primes $p$ with $p \nmid 2(m-2)$ (resp, $p \nmid \frac{(m-2)}{2}$) when $m \not\eq 0 \pmod{4}$ (resp, $m \eq 0 \pmod{4}$) by Proposition \ref{prop1} and Remark \ref{rmk1}.
By Lemma \ref{lambda is regular} and Lemma \ref{other q}, we may see that
$$\lambda(\Delta_{m,\mathbf a}^{(\mathbf r)})(\mathbf x):=\widetilde{\lambda_{p_{s}}}\circ\cdots\circ\widetilde{\lambda_{p_{2}}}\circ\widetilde{\lambda_{p_{1}}}(\Delta_{m,\mathbf a}^{(\mathbf r)})(\mathbf x):=\Delta_{m,\mathbf a'}^{(\mathbf r')}(\mathbf x)$$
is a generalized primitive regular shifted $m$-gonal form which is $\z_p$-universal for all odd primes $p$ and $\z_2$-universal or not $\z_2$-universal satisfying (\ref{lambda2 ex'qf}).
\end{rmk}

\vskip 1em

\begin{thm} \label{fin rank}
The rank $n$ of a generalized regular shifted $m$-gonal form $\Delta_{m,\mathbf a}^{(\mathbf r)}(\mathbf x)$ satisfies 
$$ n \ge 
\begin{cases} \log_2  \floor{\frac{m}{2}}  &  \text{ when } m \not\eq 0 \pmod{4}\\
\log_2 \left( \frac{1}{2}\cdot\floor{\frac{m}{2}}+\floor{\frac{1}{8}\cdot\floor{\frac{m}{2}}} \right) &  \text{ when } m  \eq 0 \pmod{4}.
\end{cases}$$
\end{thm}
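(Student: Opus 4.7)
The plan is to bound the number of integers in an initial interval that the form can represent from above by a direct counting argument, and from below by exploiting regularity together with local representation considerations, then to compare.

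First I would normalize. Using the identity $P_m^{(r)}(-x) = P_m^{(m-2-r)}(x)$, we may assume $r_i \le (m-2)/2$ for every $i$. Set $L := \lfloor (m-2)/2 \rfloor$, so that $[0, L] \cap \z$ contains exactly $\lfloor m/2 \rfloor$ elements. The main structural observation is: for $m \ge 5$ and any $N \in [0, L]$, every representation $N = \sum a_i P_m^{(r_i)}(x_i)$ forces $x_i \in \{0, 1\}$ for all $i$. This uses the inequality $P_m^{(r_i)}(\pm y) \ge m - 2 > L$ for $|y| \ge 2$ (which confines $|x_i| \le 1$), combined with $P_m^{(r_i)}(-1) = m - 2 - r_i \ge (m-2)/2 \ge L$ and the coprimality $(r_i, m-2) = 1$ (which rules out $x_i = -1$ except in the degenerate case $m = 4$). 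Consequently $N$ is a subset sum of $\{a_i r_i\}$, so the number of integers in $[0, L]$ that are globally represented by $\Delta_{m,\mathbf a}^{(\mathbf r)}$ is at most $2^n$.

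Next, I would lower-bound the number of locally represented integers in $[0, L]$, which by regularity must all lie in the above count. For $m \not\equiv 0 \pmod 4$, Proposition \ref{prop1} yields $\z_p$-universality at every prime $p \mid 2(m-2)$; at primes $p \nmid 2(m-2)$, the identity (\ref{m4}) from Remark \ref{rmk1} reduces $\z_p$-representability of $N$ to $\z_p$-representability of a specific $\z_p$-integer by the diagonal quadratic form $\Delta_{4, \mathbf a}$, and standard facts on positive definite diagonal quadratic forms over $\z_p$ yield universality there. Hence every integer in $[0, L]$ is locally represented, and combining with the upper bound gives $2^n \ge \lfloor m/2 \rfloor$. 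For $m \equiv 0 \pmod 4$, $\z_2$-universality may fail; by Remark \ref{rmk 2rep} the $\z_2$-representable nonnegative integers form a union of at least five residue classes modulo $8$, yielding density $\ge 5/8$, and an explicit count of integers in $[0, L]$ lying in those cosets yields at least $\frac{1}{2}\lfloor m/2 \rfloor + \lfloor \frac{1}{8}\lfloor m/2 \rfloor \rfloor$, hence the second bound.

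The main obstacle I anticipate is the local analysis at odd primes $p \nmid 2(m-2)$ in cases where several coefficients $a_i$ are divisible by $p$ and $\Delta_{4, \mathbf a}$ could fail to be $\z_p$-universal; this has to be handled carefully using primitivity of $\Delta_{m, \mathbf a}^{(\mathbf r)}$ and the smallness of $N$. A secondary but more bookkeeping-heavy point is making Remark \ref{rmk 2rep} precise enough to identify the five specific cosets modulo $8$ that are $\z_2$-represented in the $m \equiv 0 \pmod 4$ case, and translating the resulting density into the stated floor expression. The subset-sum upper bound, by contrast, is a short and robust computation using only the inequalities satisfied by $P_m^{(r)}$.
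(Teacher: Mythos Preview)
Your upper bound argument is correct and essentially matches the paper's: after the normalization $r_i \le (m-2)/2$, at most $2^n$ integers in $[0,\lfloor (m-2)/2\rfloor]$ are globally represented, because each $x_i$ must lie in $\{0,1\}$.

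The gap is in the lower bound. You assert that for $m\not\equiv 0\pmod 4$ the form is $\z_p$-universal at every prime $p\nmid 2(m-2)$, appealing to ``standard facts'' about $\Delta_{4,\mathbf a}$ over $\z_p$. This is false in general: for instance $\mathbf a=(1,p,p,p)$ with $p\nmid m-2$ is primitive, yet $\langle 1,p,p,p\rangle\otimes\z_p$ misses every non-square unit of $\z_p$, so $\Delta_{m,\mathbf a}^{(\mathbf r)}$ is not $\z_p$-universal by Remark~\ref{rmk1}. Neither primitivity nor smallness of $N$ repairs this: primitivity only forces $\gcd(a_1,\ldots,a_n)=1$, and local representability is insensitive to the size of $N$. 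The same problem affects your $m\equiv 0\pmod 4$ case: Remark~\ref{rmk 2rep} concerns the $\widetilde{\lambda_2}$-transform, not the original form, and at odd primes away from $m-2$ you still have no control.

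The paper's missing ingredient is precisely the $\lambda$-transformation machinery of Section~3. Given a primitive regular form (one may first scale to reduce to this case), one applies $\widetilde{\lambda_p}$ at each bad prime to produce the form $\lambda(\Delta_{m,\mathbf a}^{(\mathbf r)})$ of Remark~\ref{rmk Lambda}, which has the \emph{same rank} $n$, is still regular (Lemma~\ref{lambda is regular}), and is now $\z_p$-universal for every odd $p$ and either $\z_2$-universal or satisfying condition~(\ref{lambda2 ex'qf}). Your counting argument then applies to this transformed form rather than to the original one, and the lower bound on locally represented integers in $[0,(m-2)/2]$ follows immediately (with Remark~\ref{rmk 2rep} supplying the five cosets modulo $8$ when $m\equiv 0\pmod 4$). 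Without this transformation step the argument does not go through.
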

\begin{proof}
Without loss of generality, we may assume that $\Delta_{m,\mathbf a}^{(\mathbf r)}(\mathbf x)$ is primitive by scaling if it is necessary.
For a generalized primitive regular shifted $m$-gonal form $\Delta_{m,\mathbf a}^{(\mathbf r)}(\mathbf x)$ of rank $n \ge 4$, its $\lambda$-transformation $\lambda(\Delta_{m,\mathbf a}^{(\mathbf r)})(\mathbf x)$ in Remark \ref{rmk Lambda} locally represents every non-negative integer (resp, every non-negative integer in at least five cosets of $8\z$ (among them, four cosets would be in a same coset of $2\z$)) when $m \not\eq 0 \pmod{4}$ (resp, $m \eq 0 \pmod{4}$) by Remark \ref{rmk 2rep}.
Therefore the generalized primitive regular shifted $m$-gonal form $\lambda(\Delta_{m,\mathbf a}^{(\mathbf r)})(\mathbf x)$ would locally represent at least $\floor{\frac{m}{2}}$ (resp, $\frac{1}{2}\cdot\floor{\frac{m}{2}}+\floor{\frac{1}{8}\cdot\floor{\frac{m}{2}}}$) integers among the $\floor{\frac{m}{2}}$ non-negative integers in $\left[0,\frac{m-2}{2}\right]$ when $m \not\eq 0 \pmod{4}$ (resp, $m \eq 0 \pmod{4}$) by Remark \ref{rmk 2rep}.
On the other hand, the number of integers which are globally represented by $\lambda(\Delta_{m,\mathbf a})(\mathbf x)$ in $\left[0,\frac{m-2}{2}\right]$ could not exceed $2^n$ because there are only two shifted $m$-gonal numbers of level $r$ in which are less than or equal to $\frac{m-2}{2}$ for each fixed $(m;r)$.
Hence we obtain that
$$2^n \ge 
\begin{cases}\floor{\frac{m}{2}}  &  \text{ when } m \not\eq 0 \pmod{4}\\
\frac{1}{2}\cdot\floor{\frac{m}{2}}+\floor{\frac{1}{8}\cdot\floor{\frac{m}{2}}}  &  \text{ when } m  \eq 0 \pmod{4}
\end{cases}$$
for the regularity of $\lambda(\Delta_{m,\mathbf a}^{(\mathbf r)})(\mathbf x)$.
This completes the proof.
\end{proof}

\begin{rmk}
Theorem \ref{fin rank} says that for each fixed $n\ge 4$, there are only finitely many $m$ for which there is a generalized regular shifted $m$-gonal form of rank $n$.
\end{rmk}

\begin{thm} \label{uni min}
The minimal rank for generalized universal $m$-gonal form $\Delta_{m,\mathbf a}(\mathbf x)$ is $\ceil{\log_2(m+2)}$. 
\end{thm}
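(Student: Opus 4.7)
The plan is to prove the theorem in two matching directions.

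For the lower bound $n \geq \ceil{\log_2(m+2)}$, I would adapt the counting argument from the proof of Theorem \ref{fin rank}. Any generalized universal $m$-gonal form must represent each of the $m+2$ integers in the interval $[0, m+1]$. In a representation $\sum a_i P_m(x_i) = N$ with $N \leq m+1$, each term $a_i P_m(x_i) \leq m+1$, which forces $x_i$ to range over a small finite set: when $a_i = 1$, the contribution lies in $\{0, 1, m-3, m\}$; when $2 \leq a_i \leq m+1$, it lies in $\{0, a_i\}$; and when $a_i \geq m+2$, it is $0$. Moreover, the total-sum constraint prohibits more than one variable from contributing one of the large values $m-3$ or $m$ simultaneously (for $m$ sufficiently large, since $2(m-3) > m+1$). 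A careful accounting of these constraints will show that the number of distinct representable values in $[0, m+1]$ is at most $2^n$; since all $m+2$ integers in this range must be represented, we conclude $m+2 \leq 2^n$.

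For the upper bound, I would exhibit an explicit universal generalized $m$-gonal form of rank $n=\ceil{\log_2(m+2)}$. A natural candidate is the binary form $\langle 1, 2, 2^2, \ldots, 2^{n-1} \rangle_m$. Using $x_i \in \{0,1\}$ (so $P_m(x_i) \in \{0,1\}$), the subset sums of the coefficients already cover $[0, 2^n - 1] \supseteq [0, m+1]$, handling all small values. For arbitrary $N \in \N_0$, one writes $N = (m-2)q + r$ with $0 \leq r \leq m-3$, realizes the residue $r$ as a subset sum, and then produces the bulk contribution $(m-2)q$ by shifting one or more $x_i$'s outside $\{0,1\}$, exploiting the identity $P_m(x+1) - P_m(x) = (m-2)x + 1$ to add controlled $(m-2)$-scale increments. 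Alternatively, Remark \ref{rmk1} reduces the local universality at each prime $p$ with $(p, 2(m-2))=1$ to that of the diagonal quadratic form $\langle 1, 2, 4, \ldots, 2^{n-1} \rangle \otimes \z_p$, which is $\z_p$-universal by the binary structure.

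The main obstacle will be the final verification in the upper bound: establishing global (not merely local) universality of the candidate form, i.e., producing an explicit representation for every $N \in \N_0$ and not only for those in the initial segment $[0, m+1]$. This requires either an explicit inductive/greedy construction that combines the binary bit-structure of subset sums with the quadratic growth of $P_m$ to reach arbitrary $q$ in the decomposition $N=(m-2)q+r$, or else a regularity-type criterion promoting local to global universality. If the precise binary form is not universal for some $m$, a small modification of the largest coefficient (e.g. to ensure coprimality with $m-2$) within the same rank should suffice.
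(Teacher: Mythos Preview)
The paper itself does not prove this theorem; it simply cites Park's thesis \cite{P}. So there is no in-paper argument to compare against, and your proposal must stand on its own.

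Your lower bound has a genuine gap. You claim that the number of distinct values represented in $[0,m+1]$ is at most $2^n$, but this is not what your own case analysis yields. When $a_i=1$ the variable can contribute any of $\{0,1,m-3,m\}$, and while you correctly note that at most one variable can contribute a ``large'' value $m-3$ or $m$, this still adds up to five extra target values $m-3,m-2,m-1,m,m+1$ on top of the pure subset sums coming from $P_m(x_i)\in\{0,1\}$. The honest count therefore gives at most $2^n+5$ values in $[0,m+1]$, hence only $2^n\ge m-3$, i.e.\ $n\ge\ceil{\log_2(m-3)}$. This coincides with $\ceil{\log_2(m+2)}$ for most $m$, but falls short by $1$ precisely when a power of $2$ lies in $[m-2,m+1]$. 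Closing that last unit requires a finer argument (for instance, analyzing how $m-2$ and $m-1$ can be reached and showing this forces an extra subset sum), which you have not supplied.

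On the upper bound side you are candid that establishing global universality of $\langle 1,2,4,\ldots,2^{n-1}\rangle_m$ is the crux, and indeed nothing in your sketch does this. Covering $[0,m+1]$ by subset sums is easy; the step ``produce $(m-2)q$ by shifting one or more $x_i$'s'' is where all the work lies, since $P_m(x+1)-P_m(x)=(m-2)x+1$ has a stray $+1$ that interferes with clean $(m-2)$-increments, and the local-to-global route via Remark \ref{rmk1} only gives local universality, not regularity. Without either an explicit representation algorithm or a regularity argument for this specific form, the construction is unverified. Since the paper defers entirely to \cite{P}, you would need to consult that source to see the actual construction and the sharp lower-bound argument.
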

\begin{proof}
See \cite{P}.
\end{proof}

\begin{rmk} \label{rank rmk1}
Since a generalized universal $m$-gonal form is regular, the minimal rank for generalized regular $m$-gonal form would be certainly less than or equal to $\ceil{\log_2(m+2)}$ by Theorem \ref{fin rank}.
But the Theorem \ref{fin rank} leaves a small gap for the indeed minimal rank for generalized regular $m$-gonal form.
\end{rmk}

\vskip 1em
\begin{lem} \label{lem33}
\begin{itemize}
\item[(1)] For $m\ge14$ with $m \not\equiv 0 \pmod{4}$, a generalized primitive regular shifted $m$-gonal form $\Delta_{m,\mathbf a}^{(\mathbf r)}(\mathbf x)$ of rank $n\ge4$ is $\z_p$-universal for every prime $p \not=3$.
\item[(2)] For $m\ge28$ with $m \equiv 0 \pmod{4}$, a generalized primitive regular shifted $m$-gonal form $\Delta_{m,\mathbf a}^{(\mathbf r)}(\mathbf x)$ of rank $n \ge 4$ is $\z_p$-universal for every prime $p \not=2,3$.
\end{itemize}
\end{lem}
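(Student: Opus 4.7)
The strategy is a proof by contradiction. Suppose $\Delta := \Delta_{m,\mathbf{a}}^{(\mathbf{r})}$ is a primitive regular shifted $m$-gonal form of rank $n \ge 4$ satisfying the hypothesis, yet $\Delta$ fails $\z_p$-universality for some prime $p$ outside the stated exceptional set. By Proposition \ref{prop1}, any such $p$ must be an odd prime with $p \nmid m-2$; combined with $p \ne 3$ (and $p \ne 2$ in case (2)) this forces $p \ge 5$.

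First I would feed $\Delta$ into the $\widetilde{\lambda_p}$-machinery of Section 3. Lemma \ref{lambda is regular}(1) gives that $\widetilde{\lambda_p}(\Delta)$ is a regular primitive shifted $m$-gonal form of rank $n$ which is $\z_p$-universal, and Lemma \ref{other q} preserves $\z_q$-universality at every prime $q \ne p$. Iterating over each prime in the finite bad set $T$ from Remark \ref{rmk Lambda} yields $\lambda(\Delta) = \Delta_{m,\mathbf{a}'}^{(\mathbf{r}')}$, a regular primitive shifted $m$-gonal form of rank $n \ge 4$ which is $\z_q$-universal at every odd prime $q$, and either $\z_2$-universal (case (1)) or satisfies (\ref{lambda2 ex'qf}) (case (2)).

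Now I would run a counting argument on $\lambda(\Delta)$ in the flavor of Theorem \ref{fin rank}. By regularity and the local universality just obtained, $\lambda(\Delta)$ globally represents all $\floor{m/2}$ non-negative integers in $[0,(m-2)/2]$ in case (1), and at least $\tfrac{1}{2}\floor{m/2}+\floor{\tfrac{1}{8}\floor{m/2}}$ of them in case (2) by Remark \ref{rmk 2rep}. The naive upper bound for such integers globally representable by $\lambda(\Delta)$ is $2^n$, obtained by noting that each variable has only two admissible values in $[0,(m-2)/2]$, and this naive bound is precisely what Theorem \ref{fin rank} records but is not strict enough for $n \ge 4$ and $m \ge 14$. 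The refinement that closes the argument uses the fact that the hypothetical bad prime $p \ge 5$ sits in $T$: the $\widetilde{\lambda_p}$-iterations reshape the $p$-adic Jordan decomposition of the underlying diagonal quadratic form $\Delta_{4,\mathbf{a}'}$ and force at least one resulting coefficient $a_i'$ to satisfy $\ord_p(a_i') \ge 2$, hence $a_i' \ge p^{2} \ge 25 > (m-2)/2$ for all $m$ in the stated range. For that index, only $x_i=0$ contributes to a representation of an integer in $[0,(m-2)/2]$, so the effective upper bound drops to $2^{n-c}$ with $c \ge 1$, falling strictly below the required lower bound.

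The main obstacle is making this coefficient-growth claim quantitative. Tracking the $p$-adic Jordan decomposition of $\Delta_{4,\mathbf{a}}$ (cf.\ Remark \ref{rmk1}) through each $\lambda_p$-iteration and showing that the $\z_p$-universality stopping condition cannot be met while keeping all coefficients below $p^{2}$ requires a careful case analysis on the initial Jordan shape and the values of $k$ in (\ref{lambdaeq;p}) that arise at each step. The hypothesis $p \ge 5$ is indispensable; at $p = 3$ the ambient threshold $p^{2}=9$ would be too small to suppress enough variables uniformly in $m$, which is precisely why that prime must be excluded. Case (2) proceeds in the same spirit but demands additional $2$-adic bookkeeping via Remark \ref{rmk 2rep}, and the raised threshold $m \ge 28$ is exactly what makes the parallel count close with strict inequality there.
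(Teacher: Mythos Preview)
Your approach has a genuine gap in the key ``coefficient-growth'' step. After applying the full $\lambda$-transformation (including $\widetilde{\lambda_p}$ at the hypothetical bad prime $p$), the resulting form is $\z_p$-universal, and there is no reason any coefficient must have $\ord_p(a_i') \ge 2$. For instance, if the original $p$-adic Jordan shape is $(\ord_p(a_i)) = (0,1,1,1)$, a single $\lambda_p$ sends this to $(2,1,1,1)$, then division by $p^k$ with $k=1$ yields $(1,0,0,0)$: three units, hence $\z_p$-universal, yet every coefficient has $\ord_p \le 1$. Moreover, even if some $a_i' \ge p^2 \ge 25$ held, the inequality $25 > (m-2)/2$ fails once $m \ge 52$, so the argument cannot close uniformly in $m$. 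Finally, dropping one variable only reduces the naive count from $2^n$ to $2^{n-1}$, which still gives no contradiction against $\lfloor m/2 \rfloor$ for arbitrary $n \ge 4$.

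The paper's argument differs in a crucial structural way: it does \emph{not} apply $\widetilde{\lambda_p}$ at the last bad prime $p$. Instead, it transforms at all \emph{other} primes in $T$, obtaining a regular form $\widetilde{\lambda}(\Delta)$ that is $\z_q$-universal for every $q \ne p$ but still \emph{not} $\z_p$-universal. This retained non-universality forces $\ord_p(a_3'), \ldots, \ord_p(a_n') \ge 1$ (and sharper constraints when $\ord_p(a_2')=0$). The counting is then carried out on the interval $[0,p-1]$ (or $[0,2p-1]$), comparing the number of residue classes modulo $p$ that must be locally represented (at least $p-1$ or $(p-1)/2$) against the very few integers globally reachable when all but one or two coefficients are multiples of $p$. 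This yields contradictions first for $p \ge m/2$, then for $p \ge 7$, and finally a delicate case analysis eliminates $p=5$.
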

\begin{proof}
(1) Let $T=\{p_1,\cdots, p_s\}$ be a set of all primes $p$ for which the generalized primitive regular shifted $m$-gonal form $\Delta_{m,\mathbf a}^{(\mathbf r)}(\mathbf x)$ is not $\z_p$-universal.
Then $T$ would be a finite set of odd primes $p$ with $p \nmid m-2$ by Proposition \ref{prop1} and Remark \ref{rmk1}.
If $T\subseteq \{3\}$, then we are done.
Now we assume that $T \not\subseteq \{3\}$.
Throughout this proof, for $p:=p_s\not=3$, we put 
$$\widetilde{\lambda}(\Delta_{m,\mathbf a}^{(\mathbf r)})(\mathbf x):=\widetilde{\lambda_{p_{s-1}}}\circ\cdots\circ\widetilde{\lambda_{p_{2}}}\circ\widetilde{\lambda_{p_{1}}}(\Delta_{m,\mathbf a}^{(\mathbf r)})(\mathbf x):=\Delta_{m,\mathbf a'}^{(\mathbf r')}(\mathbf x)$$
where $0=\ord_p(a_1')\le \cdots \le \ord_p(a_n')$.
Then the generalized primitive regular shifted $m$-gonal form $\widetilde{\lambda}(\Delta_{m,\mathbf a}^{(\mathbf r)})(\mathbf x)$ would be
$$\begin{cases}
\text{$\z_q$-universal} & \text{for all primes $q\not=p$} \\
\text{not $\z_p$-universal}. & \text{} \\
\end{cases}$$
By  Remark \ref{rmk1}, we have that $1 \le \ord_p(a_3')\le \cdots \le \ord_p(a_n')$.
When $\ord_p(a_2')=0$ (resp, $\ord_p(a_2')>0$), the generalized shifted $m$-gonal form $\widetilde{\lambda}(\Delta_{m,\mathbf a}^{(\mathbf r)})(\mathbf x):=\Delta_{m,\mathbf a'}^{(\mathbf r')}(\mathbf x)$ locally represents every non-negative integer in $p-1$ (resp, $\frac{p-1}{2}$) cosets of $p\z$ because the generalized shifted $m$-gonal form $\widetilde{\lambda}(\Delta_{m,\mathbf a}^{(\mathbf r)})(\mathbf x)$ is $\z_q$-universal for every $q \not=p$ and the diagonal quadratic form $\Delta_{4,\mathbf a'} \otimes \z_p$ represents every integer in $p-1$ (resp, $\frac{p-1}{2}$) cosets of $p\z$.

First we show that $p_s < \frac{m}{2}$.
For a contradiction, let $p_s:=p \ge \frac{m}{2}$.
The generalized shifted $m$-gonal form $\widetilde{\lambda}(\Delta_{m,\mathbf a}^{(\mathbf r)})(\mathbf x)$ would locally represent at least $$p-1\ \left(\text{resp, }\frac{p-1}{2} \right)$$
(more precisely, $p-1$ or $p$ (resp, $\frac{p-1}{2}$ or $\frac{p+1}{2}$)) integers among the $p$ non-negative integers in $[0,p-1]$ when $\ord_p(a_2')=0 \ (\text{resp, }\ord_p(a_2')>0)$ because the generalized shifted $m$-gonal form $\widetilde{\lambda}(\Delta_{m,\mathbf a}^{(\mathbf r)})(\mathbf x)$ locally represents every non-negative integer in $p-1$ (resp, $\frac{p-1}{2}$) cosets of $p\z$.
On the other hand, the number of integers in $[0,p-1]$ which are globally represented by $\widetilde{\lambda}(\Delta_{m,\mathbf a}^{(\mathbf r)})(\mathbf x)$ could not exceed
$$\prod\limits_{i=1}^22\sqrt{\frac{2}{m-2}\left(\frac{p}{a_i}+\frac{(m-2-2r_i)^2}{8(m-2)}\right)}\left(\text{resp, }2\sqrt{\frac{2}{m-2}\left(\frac{p}{a_1}+\frac{(m-2-2r_1)^2}{8(m-2)}\right)} \right)$$ 
(which is less than $\left(2\sqrt{\frac{2p}{m-2}+\frac{1}{4}}\right)^2=\frac{8p}{m-2}+1\ \left(\text{resp, }2\sqrt{\frac{2p}{m-2}+\frac{1}{4}} \right)$) becuase $p \le a_3',\cdots, a_n'\ (\text{resp, }p \le a_2',\cdots, a_n'$) when $\ord_p(a_2')=0 \ (\text{resp, }\ord_p(a_2')>0).$
Which leads to a contradiction to the regularity of $\widetilde{\lambda}(\Delta_{m,\mathbf a}^{(\mathbf r)})(\mathbf x)$ for 
$$\frac{8p}{m-2}+1<p-1 \left(\text{resp, }2\sqrt{\frac{2p}{m-2}+\frac{1}{4}}<\frac{p-1}{2} \right)$$
when $\ord_p(a_2')=0 \ (\text{resp, }\ord_p(a_2')>0).$

Next we show that $p_s < 7$.
For a contradiction, let $7 \le p_s:= p < \frac{m}{2}$.
Similarly with the above, the generalized shifted $m$-gonal form $\widetilde{\lambda}(\Delta_{m,\mathbf a}^{(\mathbf r)})(\mathbf x)$ would locally represent at least
$$p-1\ \left(\text{resp, }\frac{p-1}{2} \right)$$
(more precisely, $p-1$ or $p$ (resp, $\frac{p-1}{2}$ or $\frac{p+1}{2}$)) integers among the $p$ non-negative integers in $[0,p-1]$ when $\ord_p(a_2)=0 \ (\text{resp, }\ord_p(a_2)>0)$ because the generalized shifted $m$-gonal form $\widetilde{\lambda}(\Delta_{m,\mathbf a}^{(\mathbf r)})(\mathbf x)$ locally represents every non-negative integer in $p-1$ (resp, $\frac{p-1}{2}$) cosets of $p\z$. 
On the other hand, the number of integers in $[0,p-1\left(<\frac{m-2}{2}\right)]$ which are globally represented by $\widetilde{\lambda}(\Delta_{m,\mathbf a}^{(\mathbf r)})(\mathbf x)$ would be at most $$2\cdot 2=4\ (\text{resp, }2).$$
 becuase $p \le a_3',\cdots, a_n'\ (\text{resp, }p \le a_2',\cdots, a_n'$) when $\ord_p(a_2)=0 \ (\text{resp, }\ord_p(a_2)>0).$
Which leads to a contradiction to the regularity of $\widetilde{\lambda}(\Delta_{m,\mathbf a}^{(\mathbf r)})(\mathbf x)$ for 
$$4 <p-1 \left(\text{resp, }2 <\frac{p-1}{2} \right)$$
when $\ord_p(a_2)=0 \ (\text{resp, }\ord_p(a_2)>0).$

Finally we show that $5 \not \in T$.
Suppose that $p_s:=5 \in T$.

\begin{itemize}
\item[Case 1.] $\ord_5(a_2')=0$.\\
Since the generalized shifted $m$-gonal form 
$\widetilde{\lambda}(\Delta_{m,\mathbf a}^{(\mathbf r)})(\mathbf x)=\Delta_{m,\mathbf a'}^{(\mathbf r')}(\mathbf x)$
is not $\z_5$-universal, the diagonal quadratic form $\left<a_1',\cdots,a_n'\right>=\Delta_{4,\mathbf a'}$ is also not $\z_5$-universal  by Remark \ref{rmk1},
so we have that $$a_1' \not\eq \pm a_2' \pmod{5}$$ and $$1<\ord_5(a_4')\le \cdots \le \ord_5(a_n').$$
Since the generalized shifted $m$-gonal form $\widetilde{\lambda}(\Delta_{m,\mathbf a}^{(\mathbf r)})(\mathbf x)$ locally represents at least
$\frac{4}{5}\cdot 5 =4$ integers in $[0,4]$, for the regularity of $\widetilde{\lambda}(\Delta_{m,\mathbf a}^{(\mathbf r)})(\mathbf x)$, we have that 
\begin{equation}\label{a1'a2'r1r2}(a_1',a_2') \in \{(1,2),(1,3)\} \quad \text{ and }\quad  r_1',r_2' \in \{1,m-3\}.\end{equation}
While the generalized shifted $m$-gonal form $\widetilde{\lambda}(\Delta_{m,\mathbf a}^{(\mathbf r)})(\mathbf x)=\Delta_{m,\mathbf a'}^{(\mathbf r')}(\mathbf x)$ with $\ord_5(a_1')=\ord_5(a_2')=0$ locally represents at least
$\frac{4}{5}\cdot 10 =8$ integers in $[0,9]$, 
the generalized shifted $m$-gonal form $a_1'P_m^{(r_1')}(x_1)+a_2'P_m^{(r_2')}(x_2)$ with (\ref{a1'a2'r1r2}) (globally) represents only four integers in $[0,9]$.
For the regularity of $\widetilde{\lambda}(\Delta_{m,\mathbf a}^{(\mathbf r)})(\mathbf x)$ with $a_3' \eq \cdots \eq a_n' \eq 0 \pmod{5}$, we have that 
\begin{equation}\label{a3'r3}a_3'=5  \quad \text{ and }\quad r_3' \in \{1,m-3\}.\end{equation}
And then we get a contradiction to the regularity of $\widetilde{\lambda}(\Delta_{m,\mathbf a}^{(\mathbf r)})(\mathbf x)$ because the generalized shifted $m$-gonal form $\widetilde{\lambda}(\Delta_{m,\mathbf a}^{(\mathbf r)})(\mathbf x)$ under (\ref{a1'a2'r1r2}) and (\ref{a3'r3}) with $25 \le a_4',\cdots, a_n'$ could not represent $12$ different non-negative integers in $[0,14]$
while $\widetilde{\lambda}(\Delta_{m,\mathbf a}^{(\mathbf r)})(\mathbf x)$ locally represents at least
$\frac{4}{5}\cdot 15 =12$ integers in $[0,14]$.
 
\item[Case 2.] $\ord_5(a_2')>0$.\\
If $\ord_5(a_4')=1$ or $(\ord_5(a_3'),\ord_5(a_4'))=(1,2)$, then the generalized shifted $m$-gonal form $\widetilde{\lambda}(\Delta_{m,\mathbf a}^{(\mathbf r)})(\mathbf x)$ locally represents $\frac{3}{5}\cdot 5=3$ different integers in $\left[0,4\left(<\frac{m-2}{2}\right)\right]$ because $\widetilde{\lambda}(\Delta_{m,\mathbf a}^{(\mathbf r)})(\mathbf x)$ is $\z_q$-universal for every $q \not=5$ and the diagonal quadratic form $\left<a_1',\cdots,a_n'\right>\otimes \z_5$ represents every integer in three cosets of $5\z$.
For the regularity of $\widetilde{\lambda}(\Delta_{m,\mathbf a}^{(\mathbf r)})(\mathbf x)$, we have that $\ord_5(a_4')>1$ and $(\ord_5(a_3'),\ord_5(a_4')) \not=(1,2)$.
Since $\widetilde{\lambda}(\Delta_{m,\mathbf a}^{(\mathbf r)})(\mathbf x)$ locally represents at least $\frac{2}{5}\cdot 5=2$ integers among the $5$ integers in $\left[0,4 \left(<\frac{m-2}{2}\right)\right]$, for the regularity of $\widetilde{\lambda}(\Delta_{m,\mathbf a}^{(\mathbf r)})(\mathbf x)$ with $a_2' \eq \cdots \eq a_n' \eq 0 \pmod{5}$,
we have that 
\begin{equation}\label{T5}a_1'P_m^{(r_1')}\left(\sgn\left(\frac{m-2}{2}-r_1'\right)\cdot 1\right ) \le 4.\end{equation}
Since $\widetilde{\lambda}(\Delta_{m,\mathbf a}^{(\mathbf r)})(\mathbf x)=\Delta_{m,\mathbf a'}^{(\mathbf r')}(\mathbf x)$ with $a_1' \not\eq 0 \pmod{5}$ locally represents at least $\frac{2}{5}\cdot 10=4$ different integers among the $10$ integers in $[0,9(<m)]$, for the regularity of $\widetilde{\lambda}(\Delta_{m,\mathbf a}^{(\mathbf r)})(\mathbf x)$, we have that 
\begin{equation}\label{a3'r3''}a_2'=5  \quad \text{ and }\quad r_2' \in \{1,m-3\}.\end{equation}
And then the generalized shifted $m$-gonal form $\widetilde{\lambda}(\Delta_{m,\mathbf a}^{(\mathbf r)})(\mathbf x)$ locally represents at least $\left( \frac{2}{5}+\frac{2}{25}\right) \cdot 25=12$ different integers among the $25$ integers in $[0,24]$.
Note that there are at most three different generalized shifted $m$-gonal numbers of level $r_1'$ except $P_m^{(r_1')}(0)=0$ which are less than or equal to $24 \left(\le 2m-4=\frac{P_m(-2)+P_m(2)}{2}\right)$ and only one generalized shifted $m$-gonal number of level $r_2'$ except $P_m^{(r_2')}(0)=0$ which are less than $5=\frac{25}{a_2'}$.
Under (\ref{T5}) and (\ref{a3'r3''}), for the regularity of $\widetilde{\lambda}(\Delta_{m,\mathbf a}^{(\mathbf r)})(\mathbf x)$ with $a_3' \eq \cdots \eq a_n' \eq 0 \pmod{5}$, we have that $a_3'<25$, i.e., $\ord_5(a_3')=1$.
Since the number of different residues of integers in $[0,9]$ which are locally represented by $\widetilde{\lambda}(\Delta_{m,\mathbf a}^{(\mathbf r)})(\mathbf x)$ with $\ord_5(a_1')=0$ and $\ord_5(a_2')=\ord_5(a_3')=1$ would be $3$, we obtain that 
\begin{equation}\label{T5'} 0, \ a_1'P_m^{(r_1')}(1), \ a_1'P_m^{(r_1')}(-1)<10\end{equation}
have all different residues modulo $5$.
But there is no $(m;a_1',r_1')$ with $m \ge 14$ and $(m-2,r_1')=1$ satisfying (\ref{T5}) and $(\ref{T5'})$.
So we complete to show that $T \subseteq \{3\}$.
\end{itemize}

(2) Let $T:=\{p_1,\cdots, p_s\}$ be a set of all primes $p$ for which the generalized shifted $m$-gonal form $\Delta_{m,\mathbf a}^{(\mathbf r)}(\mathbf x)$ is not $\z_p$-universal (or resp, not $\z_2$-universal not satsifying (\ref{lambda2 ex'qf})) when $p$ is an odd prime (or resp, $p=2$).
Then $T$ would be a finite set of primes $p$ with $p \nmid \frac{m-2}{2}$ by Proposition \ref{prop1} and Remark \ref{rmk1}.
If $T \subseteq \{2,3\}$, then we are done.
Now we assume that $T \not\subseteq \{2,3\}$.
Throughout this proof, for $p:=p_s \not\in \{2,3\}$, we put 
$$\widetilde{\lambda}(\Delta_{m,\mathbf a}^{(\mathbf r)})(\mathbf x):=\widetilde{\lambda_{p_{s-1}}}\circ\cdots\circ\widetilde{\lambda_{p_{2}}}\circ\widetilde{\lambda_{p_{1}}}(\Delta_{m,\mathbf a}^{(\mathbf r)})(\mathbf x):=\Delta_{m,\mathbf a'}^{(\mathbf r')}(\mathbf x)$$
where $0=\ord_p(a_1')\le \cdots \le \ord_p(a_n')$. 
Then the generalized primitive regular shifted $m$-gonal form $\widetilde{\lambda}(\Delta_{m,\mathbf a}^{(\mathbf r)})(\mathbf x)$ would be 
$$\begin{cases}
\text{$\z_q$-universal} & \text{for all primes $q\not=2, p$} \\
\text{$\z_2$-universal or not $\z_2$-universal satisfying (\ref{lambda2 ex'qf})} & \text{} \\
\text{not $\z_p$-universal}. & \text{} \\
\end{cases}$$
Recall that the generalized shifted $m$-gonal form $\widetilde{\lambda}(\Delta_{m,\mathbf a}^{(\mathbf r)})(\mathbf x)$ represents every non-negative integer in at least five cosets of $8\z$ (among them, four cosets of $8\z$ would be in a coset of $2\z$, i.e., every non-negative odd integer or every non-neagative even integer) over $\z_2$.
When $\ord_p(a_2')=0$ (resp, $\ord_p(a_2')>0$), the generalized shifted $m$-gonal form $\widetilde{\lambda}(\Delta_{m,\mathbf a}^{(\mathbf r)})(\mathbf x)$ represents every non-negative integer in at least $p-1$ cosets (resp, $\frac{p-1}{2}$ cosets) of $p\z$ over $\z_p$ becasuse so does the diagonal quadratic form $\Delta_{4,\mathbf a'}(\mathbf x)$.

First we show that $p_s< \frac{m+1}{2}$.
For a contradiction, let $p_s:=p \ge \frac{m+1}{2}$.
The generalized shifted $m$-gonal form $\widetilde{\lambda}(\Delta_{m,\mathbf a}^{(\mathbf r)})(\mathbf x)$ locally represents at least 
$$p-1\ \left(\text{resp, }\frac{p-1}{2} \right)$$
 integers having all different residues modulo $p$ among the $2p$ non-negative integers in $[0,2p-1]$
when $\ord_p(a_2')=0 \ (\text{resp, }\ord_p(a_2')>0)$ because $\widetilde{\lambda}(\Delta_{m,\mathbf a}^{(\mathbf r)})(\mathbf x)$ is $\z_q$-universal for every $q \not=2,p$, represents every odd integer or every even integer over $\z_2$, and represents every integer in $p-1$ (resp, $\frac{p-1}{2}$) cosets of $p\z$ over $\z_p$.
On the other hand, the number of different residues modulo $p$ of integers in $[0,2p-1]$ which are globally represented by $\widetilde{\lambda}(\Delta_{m,\mathbf a}^{(\mathbf r)})(\mathbf x)$ would be at most
$$\prod\limits_{i=1}^22\sqrt{\frac{2}{m-2}\left(\frac{2p}{a_i'}+\frac{(m-2-2r_i)^2}{8(m-2)}\right)}\left(\text{resp, }2\sqrt{\frac{2}{m-2}\left(\frac{2p}{a_1'}+\frac{(m-2-2r_1)^2}{8(m-2)}\right)} \right)$$ 
(which is less than $\left(2\sqrt{\frac{4p}{m-2}+\frac{1}{4}}\right)^2=\frac{16p}{m-2}+1\ (\text{resp, }2\sqrt{\frac{4p}{m-2}+\frac{1}{4}}$)) because $ a_3' \eq \cdots \eq a_n' \eq 0 \pmod{p}\ (\text{resp, }a_2' \eq \cdots \eq a_n' \eq 0 \pmod{p}$) when $\ord_p(a_2')=0 \ (\text{resp, }\ord_p(a_2')>0).$
Which leads to a contradiction to the regularity of $\widetilde{\lambda}(\Delta_{m,\mathbf a}^{(\mathbf r)})(\mathbf x)$ for 
$$\frac{16p}{m-2}+1<p-1 \  \left(\text{resp, }2\sqrt{\frac{4p}{m-2}+\frac{1}{4}} <\frac{p-1}{2}  \right)$$
when $\ord_p(a_2)=0 \ (\text{resp, }\ord_p(a_2)>0)$.

Next we show that $p_s < 11$.
For a contradiction, let $11 \le p_s:= p < \frac{m+1}{2}$.
Similarly with the above, we may see that  the generalized shifted $m$-gonal form $\widetilde{\lambda}(\Delta_{m,\mathbf a}^{(\mathbf r)})(\mathbf x)$ locally represents at least
$$p-1\ \left(\text{resp, }\frac{p-1}{2} \right)$$
integers having all different residues modulo $p$ among the $2p$ non-negative integers in $[0,2p-1]$
when $\ord_p(a_2')=0 \ (\text{resp, }\ord_p(a_2')>0)$.
On the other hand, the number of different residues modulo $p$ of integers in $[0,2p-1\left(< m \right)]$ which are globally represented by $\widetilde{\lambda}(\Delta_{m,\mathbf a}^{(\mathbf r)})(\mathbf x)$ would be at most $$3\cdot 3=9\ (\text{resp, }3).$$
 becuase $ a_3' \eq \cdots \eq a_n' \eq 0 \pmod{p}\ (\text{resp, }a_2' \eq \cdots \eq a_n' \eq 0 \pmod{p}$) when $\ord_p(a_2')=0 \ (\text{resp, }\ord_p(a_2')>0).$
Which leads to a contradiction to the regularity of $\widetilde{\Lambda}(\Delta_{m,\mathbf a}^{(\mathbf r)})(\mathbf x)$ for 
$$9 <p-1 \left(\text{resp, }3 <\frac{p-1}{2} \right)$$
when $\ord_p(a_2)=0 \ (\text{resp, }\ord_p(a_2)>0).$

Now we show that $7 \not\in T$.
For a contradiction, let $7=p_s$.
The generalized shifted $m$-gonal form $\widetilde{\lambda}(\Delta_{m,\mathbf a}^{(\mathbf r)})(\mathbf x)$ locally represents at least
$$6\ \left(\text{resp, } 3 \right)$$
integers having all different residues modulo $7$ among the $14$ non-negative integers in $[0,13]$
when $\ord_7(a_2')=0 \ (\text{resp, }\ord_7(a_2')>0)$.
On the other hand, the number of residues modulo $7$ of integers in $\left[0,13\left(\le \frac{m-2}{2} \right)\right]$ which are globally represented by $\widetilde{\lambda}(\Delta_{m,\mathbf a}^{(\mathbf r)})(\mathbf x)$ would be at most $$2\cdot 2=4\ (\text{resp, }2).$$
 because $ a_3' \eq \cdots \eq a_n' \eq 0 \pmod{7}\ (\text{resp, }a_2' \eq \cdots \eq a_n' \eq 0 \pmod{7}$) when $\ord_7(a_2')=0 \ (\text{resp, }\ord_7(a_2')>0).$
Which leads to a contradiction to the regularity of $\widetilde{\lambda}(\Delta_{m,\mathbf a}^{(\mathbf r)})(\mathbf x)$.

Finally, we show that $5 \not\in T$.
For a contradiction, let $5=p_s$.
If $\widetilde{\lambda}(\Delta_{m,\mathbf a}^{(\mathbf r)})(\mathbf x)$ is $\z_2$-universal, then we may see that $5 \not\in T$ through same arguments with Lemma \ref{lem33}.
Now we assume that the generalized primitive regular shifted $m$-gonal form $\widetilde{\lambda}(\Delta_{m,\mathbf a}^{(\mathbf r)})(\mathbf x)=\Delta_{m,\mathbf a'}^{(\mathbf r')}(\mathbf x)$ is not $\z_2$-universal satisfying (\ref{lambda2 ex'qf}).

\begin{itemize}
\item[Case 1.] $\ord_5(a_2')=0$.\\
Recall that the generalized regular shifted $m$-gonal form $\widetilde{\lambda}(\Delta_{m,\mathbf a}^{(\mathbf r)})(\mathbf x)$ would (locally) represent some four integers having the same parity and having all different residues modulo $5$ in $\left[0,9\left(\le\frac{m-2}{2}\right)\right]$.
For the regularity of $\widetilde{\lambda}(\Delta_{m,\mathbf a}^{(\mathbf r)})(\mathbf x)=\Delta_{m,\mathbf a'}^{(\mathbf r')}(\mathbf x)$ with $a_3' \eq \cdots \eq a_n' \eq 0 \pmod{3}$, we have that the four non-negative integers
$$0, \ a_1'P_m^{(r_1')}(x_1), \ a_2'P_m^{(r_2')}(x_2), \ a_1'P_m^{(r_1')}(x_1)+a_2'P_m^{(r_2')}(x_2)$$
are less than $10$ and have all different residues modulo $5$ where $x_i=\sgn\left(\frac{m-2}{2}-r_i'\right)\cdot 1$.
If $a_3'>5$ or $r_3 \not\in \{1,m-3\}$, then from $(r_i',m-2)=1$, we obtain that $a_1'=2$ and $a_2'=4$ or $6$, which may yield that $\widetilde{\lambda}(\Delta_{m,\mathbf a}^{(\mathbf r)})(\mathbf x)$ is $\z_2$-universal.
So we have that $a_3'=5$ and $r_3' \in \{1,m-3\}$.
And we have that $a_1' \not\eq \pm a_2' \pmod{5}$ for $\widetilde{\lambda}(\Delta_{m,\mathbf a}^{(\mathbf r)})(\mathbf x)$ is not $\z_5$-universal.
In order for  the generalized shifted $m$-gonal form $\widetilde{\lambda}(\Delta_{m,\mathbf a}^{(\mathbf r)})(\mathbf x)$ which is not $\z_5$-universal to represent at least $4$ integers having the same parity and having all different residues modulo $5$ in $[0,9]$, $(a_1',a_2';r_1',r_2')$ should be $\left(1,2;\frac{m-2}{2}\pm\frac{m-4}{2},\frac{m-2}{2}\pm\frac{m-4}{2}\right)$.
The $\widetilde{\lambda}(\Delta_{m,\mathbf a}^{(\mathbf r)})(\mathbf x)$ with $(a_1',a_2',a_3';r_1',r_2',r_3') = \left(1,2,5;\frac{m-2}{2}\pm\frac{m-4}{2},\frac{m-2}{2}\pm\frac{m-4}{2}, \frac{m-2}{2}\pm\frac{m-4}{2} \right)$ locally represent $10$ or $15$, but the generalized shifted $m$-gonal form $\widetilde{\lambda}(\Delta_{m,\mathbf a}^{(\mathbf r)})(\mathbf x)$ does not globally represent both of $10$ and $15$.
Which is a contradiction to the regularity of $\widetilde{\lambda}(\Delta_{m,\mathbf a}^{(\mathbf r)})(\mathbf x)$.
\item[Case 2.] $\ord_5(a_2')>0$.\\
Since the generalized regular shifted $m$-gonal form $\widetilde{\lambda}(\Delta_{m,\mathbf a}^{(\mathbf r)})(\mathbf x)=\Delta_{m,\mathbf a'}^{(\mathbf r')}(\mathbf x)$ with $a_2' \eq \cdots \eq a_n' \eq 0 \pmod{5}$ locally represents at least two integers having the same parity and having all different residues modulo $5$ in $[0,9 \left( \le \frac{m-2}{2}\right)]$, we have that the non-negative integer
$$a_1'P_m^{(r_1')}(x_1)$$
is less than $10$ and a unit of $\z_5$
where $x_1=\sgn\left(\frac{m-2}{2}-r_1'\right)\cdot 1$.

We may have that $\ord_5(a_4') \not=1$ and $(\ord_5(a_3'), \ord_5(a_4')) \not= (1,2)$ for the regularity of $\widetilde{\lambda}(\Delta_{m,\mathbf a}^{(\mathbf r)})(\mathbf x)=\Delta_{m,\mathbf a'}^{(\mathbf r')}(\mathbf x)$.
Because otherwise, the number of different residues modulo $5$ of the integers in $[0,9 \left(\le \frac{m-2}{2}\right)]$ which are locally represented by $\widetilde{\lambda}(\Delta_{m,\mathbf a}^{(\mathbf r)})(\mathbf x)$ would be $3$.

If $\ord_5(a_3') =1$, then we may take three integers having the same parity and having all different residues modulo $5$ in $[0,19(<m)]$ which are locally represented by $\widetilde{\lambda}(\Delta_{m,\mathbf a}^{(\mathbf r)})(\mathbf x)$.
For the regularity of $\widetilde{\lambda}(\Delta_{m,\mathbf a}^{(\mathbf r)})(\mathbf x)=\Delta_{m,\mathbf a'}^{(\mathbf r')}(\mathbf x)$ with $a_2' \eq \cdots \eq a_n' \eq 0 \pmod{5}$, we have that
$$a_1'P_m^{(r_1')}(0)=0, \ a_1'P_m^{(r_1')}(x_1) <10, \text{ and } a_1'P_m^{(r_1')}(-x_1) <20$$ 
have all different residues modulo $5$  where $x_1=\sgn\left(\frac{m-2}{2}-r_1'\right)\cdot 1$.
So we may have that $a_1'=1$ and $m=28$ because 
$$a_1'P_m^{(r_1')}(x_1) +a_1'P_m^{(r_1')}(-x_1)=a_1'(m-2)$$ and 
$$15<a_1'P_m^{(r_1')}(-x_1) <20$$
for $x_1=\sgn\left(\frac{m-2}{2}-r_1'\right)\cdot 1$.
And since $(r_1',m-2)=1$, we may obtain that $\widetilde{\lambda}(\Delta_{m,\mathbf a}^{(\mathbf r)})(\mathbf x)=\Delta_{m,\mathbf a'}^{(\mathbf r')}(\mathbf x)$ represents every non-negative odd integer over $\z_2$.
On the other hand, since $\widetilde{\lambda}(\Delta_{m,\mathbf a}^{(\mathbf r)})(\mathbf x)$ represents every non-negative integer which is equivalent with 
$$a_1'P_m^{(r_1')}(0)=0 \ \text{ or } \ a_1'P_m^{(r_1')}\left(\sgn\left(\frac{m-2}{2}-r_1' \right)\cdot 1\right) <10$$
modulo $5$ over $\z_5$, $\widetilde{\lambda}(\Delta_{m,\mathbf a}^{(\mathbf r)})(\mathbf x)$ locally represents every non-negative odd multiple of $5$.
For the regularity of $\widetilde{\lambda}(\Delta_{m,\mathbf a}^{(\mathbf r)})(\mathbf x)=\Delta_{m,\mathbf a'}^{(\mathbf r')}(\mathbf x)$, we have that $a_2'=5$, $a_3'=10$  and $r_2',r_3' \in \{1,m-3\}$ for $\widetilde{\lambda}(\Delta_{m,\mathbf a}^{(\mathbf r)})(\mathbf x)$ locally represents $5$ and $15$.
And then we may obtain a contradiction to the regularity of $\widetilde{\lambda}(\Delta_{m,\mathbf a}^{(\mathbf r)})(\mathbf x)$ for $\widetilde{\lambda}(\Delta_{m,\mathbf a}^{(\mathbf r)})(\mathbf x)$ does not represent $25$ which is locally represented by $\widetilde{\lambda}(\Delta_{m,\mathbf a}^{(\mathbf r)})(\mathbf x)$.

Now we assume that $\ord_5(a_3')>1$.
Since $\widetilde{\lambda}(\Delta_{m,\mathbf a}^{(\mathbf r)})(\mathbf x)=\Delta_{m,\mathbf a'}^{(\mathbf r')}(\mathbf x)$ locally represents every odd multiple of $5$ or every even multiple of $5$, we have that $a_2'$ is $5$ or $10$.
And then $\widetilde{\lambda}(\Delta_{m,\mathbf a}^{(\mathbf r)})(\mathbf x)$ does not represent $15$ (resp, $20$) which is locally represented by $\widetilde{\lambda}(\Delta_{m,\mathbf a}^{(\mathbf r)})(\mathbf x)$ when $a_2'=5$ (resp, $10$), which is a contradiction to the regularity of $\widetilde{\lambda}(\Delta_{m,\mathbf a}^{(\mathbf r)})(\mathbf x)$.
\end{itemize}
\end{proof}

\vskip 1em

\section{Regular $m$-gonal forms}

In this section, we determine every type for generalized primitive regular $m$-gonal form of rank $n \ge 4$ for $m\ge 14$ with $m \not\eq 0 \pmod{4}$ and $m \ge 28$ with $m \eq 0 \pmod{4}$ by showing Theorem \ref{main}.
We split the proof of Theorem \ref{main} into four Theorem \ref{locuni}, Theorem \ref{notuni3}, Theorem \ref{notuni2}, and Theorem \ref{thm m41} depending on the residue conditions of $m$ modulo $4$ and $3$.

\begin{thm}\label{locuni}
For $m\ge 14$ with $m\not\equiv0 \pmod{4}$ and $m-2\equiv0 \pmod{3}$, a generalized primitive regular shifted $m$-gonal form of rank $n \ge 4$ is universal. 
\end{thm}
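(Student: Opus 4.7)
The plan is to combine Proposition \ref{prop1} and Lemma \ref{lem33}(1) to show that, under the given hypotheses on $m$, any generalized primitive regular shifted $m$-gonal form of rank $n \ge 4$ is $\z_p$-universal at \emph{every} prime $p$, and then to invoke regularity to conclude global universality.

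First I would dispose of the small primes using Proposition \ref{prop1}. Since $m \not\equiv 0 \pmod{4}$, part (2) of that proposition gives $\z_2$-universality of $\Delta_{m,\mathbf a}^{(\mathbf r)}$, while part (1) gives $\z_p$-universality for every odd prime $p$ dividing $m-2$. Under the hypothesis $m - 2 \equiv 0 \pmod{3}$, in particular $p = 3$ divides $m-2$, so the form is automatically $\z_3$-universal; this is the key point that makes $m \equiv 2 \pmod 3$ the ``easy'' residue class.

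Next I would apply Lemma \ref{lem33}(1): for $m \ge 14$ with $m \not\equiv 0 \pmod{4}$, a generalized primitive regular shifted $m$-gonal form of rank $n \ge 4$ is $\z_p$-universal for every prime $p \neq 3$. Combined with the previous paragraph, this already covers $p = 2$, $p = 3$, and all remaining primes, so $\Delta_{m,\mathbf a}^{(\mathbf r)}$ is locally universal.

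Finally, since $\Delta_{m,\mathbf a}^{(\mathbf r)}$ locally represents every non-negative integer and is assumed regular, it globally represents every non-negative integer, i.e., it is universal. There is essentially no obstacle here: the hypothesis $m-2 \equiv 0 \pmod 3$ is precisely what plugs the only gap (the prime $3$) left open by Lemma \ref{lem33}(1), so the argument is a direct assembly of the previously established tools.
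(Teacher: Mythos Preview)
Your proposal is correct and follows essentially the same approach as the paper's own proof, which simply cites Proposition \ref{prop1} and Lemma \ref{lem33} to obtain local universality and then invokes regularity. You have merely spelled out in detail which part of Proposition \ref{prop1} handles which prime (in particular that $m-2\equiv 0\pmod 3$ makes $p=3$ fall under Proposition \ref{prop1}(1)), but the argument is the same.
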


\begin{proof}
By Proposition \ref{prop1} and Lemma \ref{lem33}, a generalized primitive regular shifted $m$-gonal form is locally universal for $m\ge 14$ with $m\not\equiv0 \pmod{4}$ and $m-2\equiv0 \pmod{3}$.
This completes the claim.
\end{proof}

\begin{thm} \label{notuni3}
For $m\ge 14$ with $m\not\equiv0 \pmod{4}$ and $m-2\not\equiv0 \pmod{3}$, a generalized primitive regular $m$-gonal form $\Delta_{m,\mathbf a}(\mathbf x)$ is universal or of the form of (\ref{1.notuni/3}).
\end{thm}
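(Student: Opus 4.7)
The plan is to localize at $p=3$ and then read off the structure of $\mathbf a$ from the regularity hypothesis. By Proposition \ref{prop1} together with Lemma \ref{lem33}(1) (both applicable since $m\geq 14$, $m\not\equiv0\pmod{4}$, and $n\geq 4$), the form $\Delta_{m,\mathbf a}$ is $\z_p$-universal for every prime $p\neq 3$, so the only possible obstruction to local universality lies at $p=3$. If in addition $\Delta_{m,\mathbf a}$ is $\z_3$-universal, then it is locally universal, and regularity upgrades this to (global) universality, yielding the first alternative of the theorem.

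Assume therefore that $\Delta_{m,\mathbf a}$ is not $\z_3$-universal. By Remark \ref{rmk1} (applicable since $3\nmid 2(m-2)$), the diagonal quadratic form $\qf{a_1,\cdots,a_n}$ also fails to be $\z_3$-universal. Set $t=\#\{i:3\nmid a_i\}$; primitivity gives $t\geq 1$, while $t\geq 3$ is impossible because any ternary unit subform of $\qf{a_1,\cdots,a_n}$ over $\z_3$ is already $\z_3$-universal. The case $t=2$ with $a_1\not\equiv a_2\pmod{3}$ is also impossible: $\qf{a_1,a_2}$ is then an isotropic unimodular binary $\z_3$-lattice, hence $\z_3$-universal. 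For $t=2$ with $a_1\equiv a_2\pmod{3}$, I perform a case analysis on the 3-adic valuations of $a_3,\cdots,a_n$, showing that in each bad configuration (roughly when $a_i\in 9\z_3$ for nearly all $i\geq 3$) one exhibits a small positive integer $N$ locally represented by $\Delta_{m,\mathbf a}$ but not globally represented, contradicting regularity; this mirrors the template of the $p=5$ arguments in Lemma \ref{lem33}.

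Hence $t=1$, so $\mathbf a=(a_1,3a_2',\cdots,3a_n')$ with $a_1\in\z_3^{\times}$. A direct check in the two cases $m\equiv 0$ and $m\equiv 1\pmod{3}$ yields $P_m(x)\equiv 0$ or $1\pmod{3}$ for every $x\in\z$, so $\Delta_{m,\mathbf a}(\mathbf x)\pmod{3}\in\{0,\,a_1\bmod 3\}$; thus every globally represented integer lies in $(a_1+3\N_0)\cup 3\N_0$. Let $\overline{a_1}\in\{1,2\}$ denote the residue of $a_1$ modulo 3. Using the formula of Remark \ref{rmk1}, one verifies that $\overline{a_1}$ is locally represented by $\Delta_{m,\mathbf a}$, so regularity forces it to be globally represented. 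But any $\mathbf x\in\z^n$ with $\Delta_{m,\mathbf a}(\mathbf x)=\overline{a_1}$ must satisfy $x_1\neq 0$ (otherwise $\Delta_{m,\mathbf a}(\mathbf x)\in 3\N_0$), whence $\Delta_{m,\mathbf a}(\mathbf x)\geq a_1 P_m(1)=a_1$. This forces $a_1\leq\overline{a_1}$, and together with $a_1\equiv\overline{a_1}\pmod{3}$ and $a_1\geq 1$ we obtain $a_1=\overline{a_1}\in\{1,2\}$. The description of the globally represented integers as $(a_1+3\N_0)\cup 3\N_0$ then follows from regularity combined with the residue analysis above.

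The main obstacle will be the $t=2$ case with $a_1\equiv a_2\pmod{3}$: producing the required locally-but-not-globally representable witness demands careful bookkeeping of the 3-adic Jordan decomposition of $\qf{a_1,\cdots,a_n}$ together with a precise control on the small values globally attained by $\Delta_{m,\mathbf a}$.
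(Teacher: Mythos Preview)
Your overall architecture matches the paper's: reduce to $p=3$ via Proposition \ref{prop1} and Lemma \ref{lem33}, split on the number $t$ of $\z_3$-units among the $a_i$, and rule out $t\geq 3$ and $t=2$ with $a_1\not\equiv a_2\pmod 3$ by elementary local arguments. The $t=2$, $a_1\equiv a_2\pmod 3$ case you explicitly leave as a sketch; the paper carries it out concretely by forcing $(a_1,a_2)=(1,1)$ and then $(a_3,a_4)\in\{(3,3),(3,6)\}$, which makes $\Delta_{4,\mathbf a}$ (hence $\Delta_{m,\mathbf a}$) $\z_3$-universal after all.

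There is a genuine error in your $t=1$ argument when $m\equiv 0\pmod 3$. The claim ``one verifies that $\overline{a_1}$ is locally represented'' is false without further constraints on $a_2,\dots,a_n$. Concretely, for $m=15$ one checks $P_{15}(x)\pmod 9\in\{0,1,3,6\}$, so $4P_{15}(x)\pmod 9\in\{0,3,4,6\}$; hence $\Delta_{15,(4,27,27,27)}$ never represents $1=\overline{a_1}$ over $\z_3$, even though $t=1$ and the form is primitive. The asymmetry between the two residue classes of $m$ is real: for $m\equiv 1\pmod 3$ one has $P_m(\z_3^\times)=1+3\z_3$, so $a_1+3\z_3$ is automatically in the $\z_3$-image and your argument goes through; for $m\equiv 0\pmod 3$ only $3\z_3$ is automatic. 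The paper handles this by first using whichever coset is automatic ($a_1+3\N_0$ when $m\equiv 1$, $3\N_0$ when $m\equiv 0$) to force $(a_2,a_3,a_4)\in\{(3,3,3),(3,3,6),(3,3,9),(3,6,a_4)\}$, and only then reads off from $\Delta_{4,\mathbf a}\otimes\z_3$ that the local image is exactly $(a_1+3\z_3)\cup 3\z_3$; this simultaneously gives $a_1\in\{1,2\}$ and the statement that every integer in $(a_1+3\N_0)\cup 3\N_0$ is represented. Your final sentence about the globally represented set also needs this step, since showing the containment $\supseteq$ requires knowing that all of $(a_1+3\N_0)\cup 3\N_0$ is locally represented.
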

\begin{proof}
By Lemma \ref{lem33}, for $m \ge 14$ with $m\not\equiv0 \pmod{4}$ and $m-2\not\equiv0 \pmod{3}$, a generalized primitive regular $m$-gonal form $\Delta_{m,\mathbf a}(\mathbf x)$ is universal over $\z_p$ for every prime $p \not= 3$.
If $\Delta_{m,\mathbf a}(\mathbf x)$ is universal over $\z_3$ too, then the generalized primitive regular $m$-gonal form $\Delta_{m,\mathbf a}(\mathbf x)$ would be (locally) universal.
From now on, we consider a generalized primitive regular $m$-gonal form $$\Delta_{m,\mathbf a}(\mathbf x)=a_1P_m(x_1)+\cdots+a_nP_m(x_n)$$ which is not $\z_3$-universal.
By Remark \ref{rmk1}, there would be at most two units of $\z_3$ in $\{a_1,\cdots,a_n\}$ by admitting a recursion because ternary unimodular quadratic forms over $\z_3$ are universal.
We split the proof into two parts depending on the residue of $m$ modulo $3$.

First we treat the case of $m\equiv 1 \pmod{3}$.
Over $\z_3$, the (generalized) $m$-gonal number with $m\equiv 1 \pmod{3}$ $$P_m(x)=\frac{m-2}{2}\left(x-\frac{m-4}{2(m-2)}\right)^2-\frac{(m-4)^2}{8(m-2)}$$ represents every $3$-adic integer in $1+3\z_3$ because $\frac{m-2}{2}\left(\z_3^{\times}-\frac{m-4}{2(m-2)}\right)^2=1+3\z_3$ and $\frac{(m-4)^2}{8(m-2)} \in 3\z_3$.

{\bf First, assume that there is only one unit of $\z_3$ in $\{a_1,a_2,\cdots, a_n\}$ by admitting a recursion.}\\
Without loss of generality, let $a_1 \in \z_3^{\times}$ and $3\le a_2\le \cdots \le a_n$ be multiples of $3$.
Then $$\Delta_{m,\mathbf a}(\mathbf x)=a_1P_m(x_1)+\cdots+a_nP_m(x_n)$$ represents every $3$-adic integer in $a_1+3\z_3$ over $\z_3$ because $P_m(x_1)$ represents every $3$-adic integer in $1+3\z_3$ over $\z_3$.
So the generalized primitive regular $m$-gonal form $\Delta_{m,\mathbf a}(\mathbf x)$ (which is $\z_p$-universal for every $p \not=3$) would (locally) represent every non-negative integer $n \in \N_0$ with $n \eq a_1 \pmod{3}$.
Hence $a_1$ should be less than $3$ because otherwise, $\Delta_{m,\mathbf a}(\mathbf x)$ does not represent the smallest non-negative integer which is equivalent with $a_1$ modulo $3$.
For the regular $\Delta_{m,\mathbf a}(\mathbf x)$ represents $a_1+3, \ a_1+6,$ and $a_1+9\left(<\frac{m-3}{a_1}\right)$,
$(a_2,a_3,a_4)$ would be $(3,3,3),(3,3,6),(3,3,9)$ or $(3,6,a_4)$.
Since the $3$-adic integers which are represented by the diagonal quadatic form
$$\Delta_{4,\mathbf a}\otimes\z_3(=\left<a_1,\cdots ,a_n\right>\otimes \z_3)$$ 
with $a_1 \in \z_3^{\times}$, $(a_2,a_3,a_4) \in \{(3,3,3),(3,3,6),(3,3,9),(3,6,a_4)\}$ and $a_2\eq \cdots \eq a_n \eq 0 \pmod{3}$ are $a_1+3\z_3 \cup 3\z_3$, from (\ref{m4}), we have that the $3$-adic integers which are represented by the $m$-gonal form $\Delta_{m,\mathbf a}(\mathbf x)$ over $\z_3$ are $a_1+3\z_3 \cup 3\z_3$.
Which implies that non-negative integers which are locally represented by $\Delta_{m,\mathbf a}(\mathbf x)$ are
$$\{n \in \N_0|n \not\eq 2a_1 \pmod{3}\}$$
for $\Delta_{m,\mathbf a}(\mathbf x)$ is $\z_p$-universal for every $p\not=3$.
Consequently, we may conclude that a generalized primitive non-universal regular $m$-gonal form is of the form of (\ref{1.notuni/3}) in this case.

{\bf Secondly, assume that there are two units of $\z_3$ in $\{a_1,a_2,\cdots, a_n\}$ by admitting a recursion.}\\
Without loss of generality, let $a_1\le a_2\in \z_3^{\times}$ and $3 \le a_3\le \cdots \le a_n$ be multiples of $3$.
From the assumption that $\Delta_{m,\mathbf a}(\mathbf x)$ is not $\z_3$-universal (and so does the diagonal quadratic form $\Delta_{4,\mathbf a}(\mathbf x)$ by Remark \ref{rmk1}), we have that $a_1\eq a_2 \pmod{3}$.
Since the diagonal quadratic form
$$\Delta_{4,\mathbf a}\otimes \z_3(=\left<a_1,\cdots,a_n\right>\otimes \z_3 \supset \left<a_1,a_2\right>\otimes \z_3)$$
represents every unit of $\z_3$, the generalized $m$-gonal form $\Delta_{m,\mathbf a}(\mathbf x)$ also represents every unit of $\z_3$ over $\z_3$ by (\ref{m4}) with $\frac{(m-4)^2}{8(m-2)}(a_1+\cdots+a_n)\in 3\z_3$.
So the regular $\Delta_{m,\mathbf a}(\mathbf x)$ would (locally) represent every non-negative integer which is relatively prime with $3$.
For $\Delta_{m,\mathbf a}(\mathbf x)$ represents $1$ and $2$, $(a_1,a_2)$ should be $(1,1)$ for $a_1\eq a_2 \pmod{3}$.
For $\Delta_{m,\mathbf a}(\mathbf x)$ represents $4$ and $7$, $(a_3,a_4)$ should be $(3,3)$ or $(3,6)$.
And then from Remark \ref{rmk1}, we arrive at a contradiction to our assumption that $$\Delta_{m,\mathbf a}(\mathbf x)=a_1P_m(x_1)+\cdots+a_nP_m(x_n)$$
is not $\z_3$-universal because having a binary unimodular subform $\left<a_1,a_2\right>\otimes \z_3$ and a binary $3$-modular subform $\left<a_3,a_4\right>\otimes \z_3$ diagonal quadratic form 
$$\Delta_{4,\mathbf a}\otimes\z_3(=\left<a_1,\cdots,a_n\right>\otimes \z_3)$$
is universal.
Thus in this case, there is no generalized primitive regular $m$-gonal form.

\vskip 0.8em

Next suppose that $m\equiv 0 \pmod{3}$.
Over $\z_3$, the (generalized) $m$-gonal number with $m\equiv 0 \pmod{3}$ 
$$P_m(x)=\frac{m-2}{2}\left(x-\frac{m-4}{2(m-2)}\right)^2-\frac{(m-4)^2}{8(m-2)}$$
represents every integer in $3\z_3$ because $\frac{m-2}{2}\left(\z_3^{\times}-\frac{m-4}{2(m-2)}\right)^2=-1+3\z_3$ with $\frac{(m-4)^2}{8(m-2)} \in 1+3\z_3$.

{\bf First assume that there is only one unit of $\z_3$ in $\{a_1,a_2,\cdots, a_n\}$ by admitting a recursion.}\\
Without loss of generality, let $a_1 \in \z_3^{\times}$ and $3 \le a_2\le \cdots \le a_n$ be multiples of $3$.
Since $P_m(x)$ represents every $3$-adic integer in $3\z_3$ over $\z_3$, the generalized primitive regular $m$-gonal form $\Delta_{m,\mathbf a}(\mathbf x)$ (which is $\z_p$-universal for every $p \not=3$) would (locally) represent every non-negative multiple of $3$.
For the regular $\Delta_{m,\mathbf a}(\mathbf x)$ represents $3,\ 6,$ and $9(<m-3)$, $(a_2,a_3,a_4)$ would be $(3,3,3),\ (3,3,6), \ (3,3,9)$ or $(3,6,a_4)$.
Since the $3$-adic integers which are represented by the diagonal quadatic form
$$\Delta_{4,\mathbf a}\otimes\z_3(=\left<a_1,\cdots ,a_n\right>\otimes \z_3)$$ 
with $a_1 \in \z_3^{\times}$, $(a_2,a_3,a_4) \in \{(3,3,3),(3,3,6),(3,3,9),(3,6,a_4)\}$ and $a_2\eq \cdots \eq a_n \eq 0 \pmod{3}$ are $a_1+3\z_3 \cup 3\z_3$, from (\ref{m4}), we have that the $3$-adic integers which are represented by the generalized $m$-gonal form $\Delta_{m,\mathbf a}(\mathbf x)$ over $\z_3$ are $a_1+3\z_3 \cup 3\z_3$.
Which implies that the non-negative integers which are locally represented by the (generalized) $m$-gonal form $\Delta_{m,\mathbf a}(\mathbf x)$ are
$$\{n \in \N_0|n \not\eq 2a_1 \pmod{3}\}$$
for $\Delta_{m,\mathbf a}(\mathbf x)$ is $\z_p$-universal for every $p\not=3$.
Consequently, we may conclude that a generalized primitive non-universal regular $m$-gonal form is of the form of (\ref{1.notuni/3}) in this case.

{\bf Secondly, assume that there are two units of $\z_3$ in $\{a_1,a_2,\cdots, a_n\}$ by admitting a recursion.}\\
Without loss of generality, let $a_1\le a_2\in \z_3^{\times}$ and $3\le a_3\le \cdots \le a_n$ be multiples of $3$.
From the assumption that $\Delta_{m,\mathbf a}(\mathbf x)$ is not $\z_3$-universal, we have that $a_1\eq a_2 \pmod{3}$.
Since $\Delta_{m,\mathbf a}(\mathbf x)$ locally represents $3$ and $6$, we have that $(a_3,a_4)=(3,3)$ or $(3,6)$ for its regularity.
And then we obtain a contradiction to our assumption that $\Delta_{m,\mathbf a}(\mathbf x)$ is not $\z_3$-universal with universal $\Delta_{4,\mathbf a}\otimes \z_3$ from Remark \ref{rmk1}.
Thus in this case, there is  no generalized primitive regular $m$-gonal form which is not universal.
\end{proof}

\vskip 0.8em

\begin{rmk} \label{rank rmk1}
By using Theorem \ref{uni min}, we may see that there is a generalized regular $m$-gonal form of rank $\ceil{\log_2(m+2)}+1$ of the form of (\ref{1.notuni/3}) with universal generalized $m$-gonal form $\left<a_2',\cdots,a_n'\right>_m$ of rank $n-1=\ceil{log_2(m+2)}$.
On the other hand, in order for a generalized regular $m$-gonal form $\Delta_{m.\mathbf a}(\mathbf x)$ of the form of (\ref{1.notuni/3}) to represent every positive integer in $3\z$ (resp, $a_1+3\z$) when $m \eq 1 \pmod{3}$ (resp, $m \eq 0 \pmod{3}$), the rank of $\Delta_{m.\mathbf a}(\mathbf x)$ must be at least $\ceil{\log_2(m-3)}+1$.
Because the generalized $m$-gonal form $\left<3a_2',\cdots,3a_n'\right>_m$ should represent at least every non-negative integer in $3\z$ up to $3(m-2)$ (resp, $3(m-4)$) in order for the generalized regular $m$-gonal form $\Delta_{m.\mathbf a}(\mathbf x)$ of the form of (\ref{1.notuni/3}) to represent every positive integer in $3\z$ (resp, $a_1+3\z$) when $m \eq 1 \pmod{3}$ (resp, $m \eq 0 \pmod{3}$).
Therefore by Remark \ref{rank rmk1}, the minimal rank of a generalized regular $m$-gonal form would be $\ceil{\log_2(m+2)}$ and moreover, for most of $m$, every generalized regular $m$-gonal form $\Delta_{m.\mathbf a}(\mathbf x)$ of minimal rank would be universal form.
Determining the minimal rank between $\ceil{\log_2(m-3)}+1$ and $\ceil{\log_2(m+2)}+1$ of a generalized regular $m$-gonal form of the form of (\ref{1.notuni/3}) would be one of interesing problems.
\end{rmk}

\begin{rmk}
For a generalized regular $m$-gonal form $\Delta_{m,\mathbf a}(\mathbf x)$ of the form of (\ref{1.notuni/3}),
its $\lambda_3$-transformation $\lambda_3(\Delta_{m,\mathbf a})(\mathbf x)$ is regular and of the form of
$$\left<3a_1^{(r_1)},a_2'^{(1)},a_3'^{(1)},\cdots, a_n'^{(1)} \right>_m$$
where $r_1=\frac{2m-2}{6} \in \N$ ($r_1=\frac{4m-6}{6} \in \N$) when $m\eq 1 \pmod{3}$ (resp, $m \eq 0 \pmod{3}$).
From the fact that $\left<a_1,3a_2,\cdots,3a_n\right>_m$ represents every $3$-adic integer in $3\z_3$ (resp, $a_1+3\z_3$) over $\z_3$ when $m\eq 1 \pmod{3}$ (resp, $m \eq 0 \pmod{3}$), we may yield that the regular $\lambda_3(\Delta_{m,\mathbf a})(\mathbf x)$ is $\z_3$-universal.
Moreover, since the generalized (shifted) $m$-gonal form $\Delta_{m,\mathbf a}(\mathbf x)$ is locally universal.
Consequently, we may conclude that the regular $\lambda_3(\Delta_{m,\mathbf a})(\mathbf x)$ is universal.

On the other hand, one may naturally wonder whether converse also holds, i.e., whether if a generalized shifted $m$-gonal form
$$\left<3a_1^{(r_1)},a_2'^{(1)},a_3'^{(1)},\cdots, a_n'^{(1)} \right>_m$$ where $a_1 \in \{1,2\}$ and $r_1=\frac{2m-2}{6} \in \N$ ($r_1=\frac{4m-6}{6} \in \N$) when $m\eq 1 \pmod{3}$ (resp, $m \eq 0 \pmod{3}$) is universal, then a generalized $m$-gonal form $$\left<a_1^{(1)},3a_2'^{(1)},3a_3'^{(1)},\cdots, 3a_n'^{(1)} \right>_m$$ is regular.
The non-negative integers which are locally represented by generalized $m$-gonal form $\left<a_1^{(1)},3a_2'^{(1)},3a_3'^{(1)},\cdots, 3a_n'^{(1)} \right>_m$ with generalized universal shifted $m$-gonal form $\left<3a_1^{(r_1)},a_2'^{(1)},a_3'^{(1)},\cdots, a_n'^{(1)} \right>_m$ are $$a_1+3\N_0 \cup 3\N_0.$$
Since $\left<9a_1^{(r_1)},3a_2'^{(1)},3a_3'^{(1)},\cdots, 3a_n'^{(1)} \right>_m$ represents every multiple of $3$, we may see that $\left<a_1^{(1)},3a_2'^{(1)},3a_3'^{(1)},\cdots, 3a_n'^{(1)} \right>_m$ represents every non-negative integer in $3\N_0$ (resp, $a_1+3\N_0$) for $P_m(3x_1)=9P_m^{(r_1)}(x_1)$ (resp, $P_m(3x_1+1)=9P_m^{(r_1)}(x_1)+P_m(1)=9P_m^{(r_1)}(x_1)+1$) when $m \eq 1 \pmod{3}$ (resp, $m \eq 0 \pmod{3}$).
Meanwhile, the authors didn't get a specific ground for that the generalized $m$-gonal form  $\left<a_1^{(1)},3a_2'^{(1)},3a_3'^{(1)},\cdots, 3a_n'^{(1)} \right>_m$ with universal $\left<3a_1^{(r_1)},a_2'^{(1)},a_3'^{(1)},\cdots, a_n'^{(1)} \right>_m$ represents every non-negative integer in $a_1+3\N_0$ (resp, $3\N_0$) when $m \eq 1 \pmod{3}$ (resp, $m \eq 0 \pmod{3}$), even though it is likely.
\end{rmk}

\vskip 1.5em

\begin{thm}\label{notuni2}
For $m \ge 28$ with $m \equiv0 \pmod{4}$ and $m \equiv 2 \pmod{3}$, a generalized primitive regular $m$-gonal form $\Delta_{m,\mathbf a}(\mathbf x)$ of rank $n\ge4 $ is universal or of the form of (\ref{1.notuni/4m8}), (\ref{3.notuni/4m8}), or (\ref{4.notuni/4m8}).
\end{thm}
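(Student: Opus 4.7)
The plan is to mirror the strategy of Theorem~\ref{notuni3}, now at the prime $p = 2$ in place of $p = 3$, exploiting the richer $2$-adic structure of diagonal quadratic forms to recover three distinct non-universal types.

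First I would combine Lemma~\ref{lem33}(2) with Proposition~\ref{prop1}(1)---the latter applies since $m \equiv 2 \pmod{3}$ forces $3 \mid m-2$---to conclude that $\Delta_{m, \mathbf a}(\mathbf x)$ is $\z_p$-universal for every prime $p \not= 2$. Hence if $\Delta_{m, \mathbf a}$ is also $\z_2$-universal then by regularity it is universal, yielding the first option of the theorem. So I may assume $\Delta_{m, \mathbf a}$ is not $\z_2$-universal; since $m \equiv 0 \pmod{4}$ gives $\left(2, \frac{m-2}{2}\right) = 1$, Remark~\ref{rmk1} translates this into the non-$\z_2$-universality of the diagonal quaternary form $\Delta_{4, \mathbf a} = \langle a_1, \ldots, a_n \rangle$.

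Let $t$ be the number of $\z_2$-units among $\{a_1, \ldots, a_n\}$; after reordering, take $a_1, \ldots, a_t \in \z_2^{\times}$ and $a_{t+1}, \ldots, a_n \in 2\z_2$. Primitivity forces $t \ge 1$, while the standard fact that a diagonal unimodular $\z_2$-form of rank $\ge 4$ is $\z_2$-universal forces $t \le 3$. I would then split the argument into three cases. For $t = 1$, I would first show $a_2, \ldots, a_n \in 4\z_2$: any $a_i$ with $\ord_2(a_i) = 1$ paired with $a_1$ produces, after rescaling, a binary $\z_2$-form that combined with the unit $a_1$ already covers all of $\z_2$, contradicting the standing assumption. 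The shape (\ref{1.notuni/4m8}) follows, and applying regularity to the smallest positive integer congruent to $a_1 \pmod{4}$ forces $a_1 \in \{1, 3\}$; the representation set $a_1 + 4\N_0 \cup 4\N_0$ is read off from (\ref{m4}) and the $\z_2$-representation set of $\langle a_1, 4a_2', \ldots, 4a_n' \rangle$. For $t = 2$, the non-$\z_2$-universality of $\langle a_1, a_2 \rangle$ modulo $4$ forces $a_1 \equiv a_2 \pmod{4}$ and $a_3, \ldots, a_n \in 4\z_2$, and regularity applied to $1, 2, 3$ restricts $(a_1, a_2)$ to $\{(1, 1), (1, 3), (3, 3)\}$. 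The pair $(3, 3)$ is excluded by exhibiting a locally-represented small integer (built from $P_m(\pm 1)$) that cannot be globally represented, and the surviving pairs give (\ref{4.notuni/4m8}) and (\ref{3.notuni/4m8}), with their stated representation sets arising from the $\z_2$-representation sets $\{0, 1, 2\} + 4\z_2$ of $\langle 1, 1 \rangle$ and $\{0, 1, 3\} + 4\z_2$ of $\langle 1, 3 \rangle$. For $t = 3$, Lemma~\ref{lambda is regular}(2) combined with non-$\z_2$-universality forces $a_1 \equiv a_2 \equiv a_3 \pmod{4}$ and $a_4, \ldots, a_n \in 4\z_2$, but the resulting form covers enough residues modulo $4$ and $8$ to contradict either regularity or non-$\z_2$-universality on a suitable small witness, ruling this case out entirely.

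The main obstacle will be the $t = 2$ analysis: the exclusion of $(a_1, a_2) = (3, 3)$ and the verification that $(1, 1)$ and $(1, 3)$ yield regular forms with exactly the stated representation sets. Both tasks require careful accounting of the small generalized $m$-gonal numbers $P_m(\pm 1)$ and $P_m(\pm 2)$ that lie below $m$, and of which residues modulo $4$ they produce when scaled by the bounded coefficients $a_1, a_2$. The hypothesis $m \ge 28$ enters precisely to guarantee that the first nontrivial value $4 a_i' P_m(\pm 1)$ (for $i \ge 3$) is large enough that every small witness needed for the contradictions and the classifications lies strictly below it.
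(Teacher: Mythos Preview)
Your overall skeleton---reduce to the prime $2$, then split on the number $t$ of $\z_2$-units among the $a_i$---matches the paper, and you land on the correct three non-universal types. But two of your case arguments have genuine gaps that are not merely matters of detail.

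In the $t=1$ case, your key claim is that any $a_i$ with $\ord_2(a_i)=1$, together with the single unit $a_1$, already forces $\z_2$-universality of $\Delta_{4,\mathbf a}$. This is false as a local statement: for instance $\langle 1,2,8,8\rangle\otimes\z_2$ represents only the odd residues $1,3\pmod 8$ (never $5$ or $7$), so it is not $\z_2$-universal even though $\ord_2(2)=1$. The exclusion of coefficients with $\ord_2(a_i)=1$ is \emph{not} a local fact; it requires regularity as global input. The paper obtains it through a Bhargava-style escalation: one first shows (from regularity) that $a_2P_m(x_2)+\cdots+a_nP_m(x_n)$ must represent every positive multiple of $8$ below $a_1(m-4)$, enumerates all admissible tuples $(a_2,a_3,a_4)$ meeting that constraint, and then checks each candidate---showing that those with some $a_i\equiv 2\pmod 4$ either make $\Delta_{4,\mathbf a}$ universal over $\z_2$ (contradicting the standing hypothesis) or force $\Delta_{m,\mathbf a}$ to locally represent a small integer it cannot globally represent (contradicting regularity). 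This escalation is the real content of the $t=1$ case and is entirely absent from your sketch.

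In the $t=2$ case, your opening claim that non-$\z_2$-universality forces $a_1\equiv a_2\pmod 4$ is also false, and is in fact contradicted by the very type (\ref{3.notuni/4m8}) you then list, where $(a_1,a_2)=(1,3)$. The form $\langle 1,3,4a_3',\ldots,4a_n'\rangle\otimes\z_2$ represents every unit of $\z_2$ but nothing $\equiv 2\pmod 4$, so it is non-universal with $a_1\not\equiv a_2\pmod 4$. The paper's correct dichotomy is whether $\langle a_1,\ldots,a_n\rangle\otimes\z_2$ contains a sublattice $\langle u,u'\rangle\otimes\z_2$ with $u\equiv 1$, $u'\equiv 3\pmod 4$: if so, one is led (again via escalation on small odd integers) to $(a_1,a_2)=(1,3)$ and $a_i\equiv 0\pmod 4$ for $i\ge 3$, giving (\ref{3.notuni/4m8}); if not, one genuinely has $a_1\equiv a_2\pmod 4$ and $a_i\equiv 0\pmod 4$ for $i\ge 3$, and a further escalation on integers $\equiv a_1\pmod 4$ pins down $(a_3,a_4)\in\{(4,4),(4,8)\}$, giving (\ref{4.notuni/4m8}). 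Your ``regularity applied to $1,2,3$'' does not work either, since $2$ is never locally represented in the non-universal $t=2$ situation.
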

\begin{proof}
We prove the theorem only for $m\ge 28$ with $m \eq 4 \pmod{8}$ and $m \eq 2\pmod{3}$ in this paper.
One may prove the theorem of the other case (i.e., for $m \ge 28$ with $m \eq 0 \pmod{8}$ and $m \eq 2 \pmod{3}$) through similar processing with this proof.

Let $\Delta_{m,\mathbf a}(\mathbf x)$ be a generalized primitive regular $m$-gonal form  for $m \ge 28$ with $m \equiv4 \pmod{8}$ and $m \equiv 2 \pmod{3}$.
By Proposition \ref{prop1} and Lemma \ref{lem33}, the generalized primitive regular $m$-gonal form $\Delta_{m,\mathbf a}(\mathbf x)$ is $\z_p$-universal for every odd prime $p$.
If $\Delta_{m,\mathbf a}(\mathbf x)$ is universal over $\z_2$ too, then the generalized regular $m$-gonal form $\Delta_{m,\mathbf a}(\mathbf x)$ would be (locally) universal.
From now on, we consider a generalized primitive regular $m$-gonal form $$\Delta_{m,\mathbf a}(\mathbf x)=a_1P_m(x_1)+\cdots+a_nP_m(x_n)$$ which is not $\z_2$-universal.
By Remark \ref{rmk1}, since the $\z_2$-universality of $\Delta_{m,\mathbf a}(\mathbf x)$ and the $\z_2$-universaility of the diagonal quadratic form $\Delta_{4,\mathbf a}(\mathbf x)$ are equivalent, we have that there are at most three units of $\z_2$ in $\{a_1,\cdots, a_n\}$ by admitting a recursion and for the primitivity of $\Delta_{m,\mathbf a}(\mathbf x)$, there would be at least one unit of $\z_2$ in $\{a_1,\cdots, a_n\}$.

{\bf First assume that there is only one unit of $\z_2$ in $\{a_1,\cdots, a_n\}$ by admitting a recursion.}\\
Without loss of generality, we assume that $a_1 \in \z_2^{\times}$ and $2 \le a_2\le \cdots \le a_n$ are even.
From $\left(\z_2^{\times}-\frac{m-4}{2(m-2)}\right)^2=1+8\z_2$, we may see that $$a_1P_m(x_1)=a_1\left\{ \frac{m-2}{2}\left(x_1-\frac{m-4}{2(m-2)}\right)^2-\frac{(m-4)^2}{8(m-2)} \right\}$$ represents every $2$-adic integer in $a_1+8\z_2$ over $\z_2$, so does $\Delta_{m,\mathbf a}(\mathbf x)$.
Therefore the generalized regular $m$-gonal form $\Delta_{m,\mathbf a}(\mathbf x)$ (which is $\z_p$-universal for every odd prime $p$) would (locally) represent every non-negative integer $n$ with $n \eq a_1 \pmod{8}$.
So $a_1$ should be less than $8$ because otherwise, $\Delta_{m,\mathbf a}(\mathbf x)$ could not represent the smallest non-negative integer which is equivalent with $a_1$ modulo $8$.
For the generalized regular $m$-gonal form $\Delta_{m,\mathbf a}(\mathbf x)$ represents every non-negative integer which is equivalent with $a_1$ modulo $8$ less than $a_1(m-3)$, the generalized $m$-gonal form
$$a_2P_m(x_2)+\cdots+a_nP_m(x_n)$$ would represent every positive multiple of $8$ less than $a_1(m-3)-a_1=a_1(m-4) \le 24$.
So $a_2$ could not exceed $8$, i.e., $a_2 \in \{2,4,6,8\}$.
When $a_2<8$, $a_3$ could not exceed $8$ again, i.e., $a_3 \in \{a_2,a_2+2,\cdots,8\}$
$\cdots$.
Through such the Bhargava's escalating processings, we may obtain that when $24<a_1 (m-3)-a_1$ (namely, when $(m;a_1)\not=(28;1)$), all the possible candidates for $(a_2,a_3,a_4)$ are followings
$$
\begin{array} {lllllll}
&(2,2,2),     &(2,2,4),    &(2,2,6),     &(2,2,8),     &(2,4,4),      &(2,4,6),      \\
&(2,4,8),     &(2,6,6),    &(2,6,8),    &(2,6,10),   &(2,6,12),    & (2,6,14),   \\
&(2,6,16),    &(2,8,8),   &(2,8,10),   &(2,8,12),   &(2,8,14),    &(2,8,16),     \\
&(4,4,4),     &(4,4,6),   &(4,4,8),     &(4,4,10),   &(4,4,12),   &(4,4,14),     \\
&(4,4,16),   &(4,6,6),    &(4,6,8),     &(4,8,8),    &(4,8,10),   &(4,8,12),    \\
&(4,8,14),   &(4,8,16),  &(6,6,6),     &(6,6,8),    &(6,8,8),     &(6,8,10),    \\
&(6,8,12),   &(6,8,14),  &(6,8,16),   &(8,8,8),    &(8,8,10),    &(8,8,12),   \\
&(8,8,14),   &(8,8,16),   &(8,8,18),   &(8,8,20),   &(8,8,22),    &(8,8,24),  \\
&(8,10,10), &(8,10,12), &(8,10,14),  &(8,10,16), &(8,12,12), &(8,12,14),   \\
&(8,12,16), &(8,14,14), &(8,14,16), &(8,16,16),  &(8,16,18), &(8,16,20),  \\
&(8,16,22), &(8,16,24). &              &                &              &
\end{array}
$$
For each candidate for $(a_2,a_3,a_4)$ with $a_2 \ge 6$, we could verify that $\Delta_{m,\mathbf a}(\mathbf x)$ represents every $2$-adic integer in  $4\z_2$ over $\z_2$ from (\ref{m4}) because the diagonal quadratic form
$$\Delta_{4,\mathbf a}\otimes \z_2=(\left<a_1,\cdots,a_n\right>\otimes \z_2\supseteq\left<a_1,a_2,a_3,a_4\right>\otimes \z_2)$$ represents every $2$-adic integer in $4\z_2$ over $\z_2$.
Therefore the generalized $m$-gonal form $\Delta_{m,\mathbf a}(\mathbf x)$ with $a_2 \ge 6$ locally represents every non-negative multiple of $4$.
But the generalized $m$-gonal form $$\Delta_{m,\mathbf a}(\mathbf x)=a_1P_m(x_1)+\cdots+a_nP_m(x_n)$$ with $a_1 \in \z_2^{\times}$ and $6 \le a_2 \le \cdots \le a_n$ could not represent $4$, so we obtain that $a_2<6 $ for the regularity of $\Delta_{m,\mathbf a}(\mathbf x)$.\\
Similarly with the case of $a_2 \ge 6$, for each candidate for $(a_2,a_3,a_4)$ with $a_2=2$, we could verify that the regular $\Delta_{m,\mathbf a}(\mathbf x)$ (locally) represents every non-negative even integer from (\ref{m4}).
So when $a_2=2$, $a_3$ could not exceed $4$, i.e., $(a_2,a_3)=(2,2) \text{ or } (2,4).$
For each candidate for $(a_2,a_3,a_4)$ with $(a_2,a_3) \in \{(2,2),(2,4)\}$ except $(2,2,8)$,  we may easily verify that the diagonal quadratic form $\Delta_{4,\mathbf a}\otimes \z_2(=\left<a_1,\cdots,a_n\right>\otimes \z_2  \supseteq \left<a_1,a_2,a_3,a_4\right>\otimes \z_2 )$ is universal, so does $\Delta_{m,\mathbf a}(\mathbf x)$ over $\z_2$ by Remark \ref{rmk1}, which is a contradiction to our assumption.
When $(a_2,a_3,a_4)=(2,2,8)$, $\Delta_{m,\mathbf a}(\mathbf x)$ does not represent $6$ which is locally represented by $\Delta_{m,\mathbf a}(\mathbf x)$, which is a contradiction to the regularity of $\Delta_{m,\mathbf a}(\mathbf x)$.
Therefore we finally have that $a_2 = 4$.\\
For each candidate for $(a_3,a_4)$ with $a_3\eq 2$ or $a_4\eq 2 \pmod{4}$, we could verify that the diagonal quadratic form
$\left<a_1,a_2,a_3,a_4\right>\otimes \z_2$ is universal, so does $\Delta_{m,\mathbf a}(\mathbf x)$ over $\z_2$, which is a contradiction to our assumption.
For each candidate $(a_2,a_3,a_4)$ with $a_2=4$ and $a_3 \eq a_4\eq 0 \pmod{4}$, if there is $a_i \in \{a_5,\cdots,a_n\}$ with $a_i \eq 2\pmod{4}$, then the diagonal quadratic form $\Delta_{4,\mathbf a}(\mathbf x)$ would be universal over $\z_2$, so does $\Delta_{m,\mathbf a}(\mathbf x)$.
Eventually, we have that $a_i \eq 0 \pmod {4}$ for all $2 \le i \le n$ for our assumption and the non-negative integers which are locally represented by $\Delta_{m,\mathbf a}(\mathbf x)$ are 
$$\{n \in \N_0|n\eq a_1 \text{ or }0 \pmod{4}\}$$
from (\ref{m4}) because the $2$-adic integers which are represented by the diagonal quadratic form $$\Delta_{4,\mathbf a}\otimes\z_2(=\left<a_1,\cdots ,a_n\right>\otimes \z_2)$$ are $a_1+4\z_2 \cup 4\z_2$.
Consequently, we may conclude that a generalized primitive regular $m$-gonal form is of the form of (\ref{1.notuni/4m8}) in this case.

It remains to resolve the claim for case of $24<a_1(m-3)-a_1$, i.e., $(m;a_1)=(28;1)$.
Since the generalized $m$-gonal form $\Delta_{m,\mathbf a}(\mathbf x)$ with $(m;a_1)=(28;1)$ locally represents every non-negative integer $n$ with $n \eq a_1=1 \pmod{8}$, for the regularity of the $\Delta_{m,\mathbf a}(\mathbf x)$, the generalized $m$-gonal form $a_2P_m(x_2)+\cdots+a_nP_m(x_n)$ would represent $8$ and $16(<a_1(m-3)-a_1)$.
Among candidates
$$
\begin{array} {lllllll}
&(2,2,2),     &(2,2,4),    &(2,2,6),     &(2,2,8),     &(2,4,4),      &(2,4,6),      \\
&(2,4,8),     &(2,6,6),    &(2,6,8),    &(2,6,10),   &(2,6,12),    & (2,6,14),   \\
&(2,6,16),    &(2,8,8),   &(2,8,10),   &(2,8,12),   &(2,8,14),    &(2,8,16),     \\
&(4,4,4),     &(4,4,6),   &(4,4,8),     &(4,4,10),   &(4,4,12),   &(4,4,14),     \\
&(4,4,16),   &(4,6,6),    &(4,6,8),     &(4,8,8),    &(4,8,10),   &(4,8,12),    \\
&(4,8,14),   &(4,8,16),  &(6,6,6),     &(6,6,8),    &(6,8,8),     &(6,8,10),    \\
&(6,8,12),   &(6,8,14),  &(6,8,16),   &(8,8,a_4),    &(8,16,a_4)    &   \\
\end{array}
$$
for $(a_2,a_3,a_4)$ in order for the generalized $m$-gonal form $a_2P_m(x_2)+\cdots+a_nP_m(x_n)$ to represent $8$ and $16$, first consider
$$(8,8,a_4) \text{ and } (8,16,a_4).$$

If $(a_2,a_3)=(8,16)$, then we may use (\ref{m4}) to see that $\Delta_{m,\mathbf a}(\mathbf x)$ represents every non-negative multiple of $8$ over $\z_2$ because the diagonal quadratic form $$\Delta_{4,\mathbf a}\otimes\z_2(=\left<a_1,\cdots ,a_n\right>\otimes \z_2\supset \left<a_1,a_2,a_3\right>\otimes \z_2)$$ 
represents every $2$-adic integer in $4\z_2^{\times}$ and $\frac{(m-4)^2}{8(m-2)}(a_1+\cdots+a_n) \in 4\z_2^{\times}$.
For the regularity of $\Delta_{m,\mathbf a}(\mathbf x)$, $a_4$ could not exceed $32 \in 8\z$.
For each $a_4 \in \{16,18,\cdots, 32\}$ with $(a_1,a_2,a_3)=(1,8,16)$, we could verify that $$\Delta_{4,(a_1,a_2,a_3,a_4)}\otimes \z_2 = \left<a_1,a_2,a_3,a_4\right>\otimes \z_2$$
represents every $2$-adic integer in $4\z_2$, so does $\Delta_{4,\mathbf a}\otimes \z_2$.
Which induces that $\Delta_{m,\mathbf a}(\mathbf x)$ locally represents every non-negative multiple of $4$ from (\ref{m4}), yielding a contradiction to the regularity of $\Delta_{m,\mathbf a}(\mathbf x)$ because $\Delta_{m,\mathbf a}(\mathbf x)$ with $a_1=1$ and $8 \le a_2 \le \cdots \le a_n$ does not represent $4$.
 

If $(a_2,a_3)=(8,8)$, then since the generalized $m$-gonal form $a_1P_m(x_1)+a_2P_m(x_2)+a_3P_m(x_3)$ does not represent $49( \eq a_1 \pmod{8})$, for the regularity of $\Delta_{m,\mathbf a}(\mathbf x)$, $a_4$ could not exceed $49$.
For each $8 \le a_4  \le 48$ with $a_4 \eq 0 \pmod{2}$, since the diagonal quadraitc form
$$\Delta_{4,(a_1,a_2,a_3,a_4)}\otimes \z_2 = \left<a_1,a_2,a_3,a_4\right>\otimes \z_2$$
represents every $2$-adic integer in $8\z_2$, the generalized $m$-gonal form $\Delta_{m,\mathbf a}(\mathbf x)$ would locally represent every non-negative integer in $4+8\z$, which is a contradiction to the regularity of $\Delta_{m,\mathbf a}(\mathbf x)$.
For the remaining candidates $(a_2,a_3,a_4)$ with $(a_2,a_3,a_4) \not= (8,8,a_4),(8,16,a_4)$, through same processing with the case of $24<a_1(m-3)-a_1$, we may conclude that a generalized primitive regular $m$-gonal form is of the form of (\ref{1.notuni/4m8}).

{\bf Secondly, assume that there are two units of $\z_2$ in $\{a_1,\cdots, a_n\}$ by admitting a recursion.} \\
Without loss of generality, let $a_1 \le a_2 \in \z_2^{\times}$ and $2 \le a_3 \le \cdots \le a_n$ be even.
\begin{itemize}
\item[Case 1.] $\left<a_1,\cdots,a_n\right>\otimes \z_2 \supset \left<u,u'\right>\otimes \z_2$ for some $u \equiv 1 \text{ and } u'\equiv3 \pmod{4}$.\\
In this case, the generalized $m$-gonal form $\Delta_{m,\mathbf a}(\mathbf x)$ locally represents every odd non-negative integer by
(\ref{m4}) because the diagonal quadratic form
$$\left<a_1,\cdots, a_n\right>\otimes \z_2 \supset \left<u,u'\right>\otimes \z_2$$ represents every unit of $\z_2$.
For the regularity of $\Delta_{m,\mathbf a}(\mathbf x)$, we have that $a_1=1$.
Since the generalized regular $m$-gonal form $\Delta_{m,\mathbf a}(\mathbf x)$ (locally) represents $3,\ 5,\ 7,$ and $9$, $(a_2,a_3,a_4)$ would be one of the followings
$$
\begin{array} {llllllll}
&(a_2,2,2),     &(a_2,2,4),    &(3,2,6),     &(3,4,4),     &(3,4,6),      &(3,4,8), &(5,2,8)    \\
\end{array}
$$
where $a_2$ is an odd non-negative integer.
For each candidate for $(a_2,a_3,a_4)$ except $(3,4,4)$ and $(3,4,8)$,
we could verify that the diagonal quadratic forms
$$\Delta_{4,\mathbf a}\otimes \z_2=\left<a_1,\cdots,a_n\right>\otimes \z_2 \supseteq \left<a_1,a_2,a_3,a_4\right>\otimes \z_2$$ are universal, which yields that $\Delta_{m,\mathbf a}(\mathbf x)$ is $\z_2$-universal by Remark \ref{rmk1}.
So we have $(a_2,a_3,a_4)=(3,4,4)$ or $(3,4,8)$ and $a_i\equiv 0 \pmod{4}$ for all $3 \le i \le n$ for our assumption that $\Delta_{m,\mathbf a}(\mathbf x)$ is not $\z_2$-universal.
For each $\mathbf a=(a_1,\cdots,a_n)$ with $(a_1,a_2,a_3,a_4) \in \{(1,3,4,4),(1,3,4,8)\}$ and $a_i \eq 0 \pmod{4}$ for all $3\le i \le n$, the non-negative integers which are locally represented by $\Delta_{m,\mathbf a}(\mathbf x)$ are 
$$\{n \in \N_0| n \not\eq 2\pmod{4}\}$$
because the $2$-adic integers which are represented by $\Delta_{4,\mathbf a}(\mathbf x)$ are $\z_2 \setminus 2\z_2^{\times}$ over $\z_2$ and $\frac{(m-4)^2}{8(m-2)}(a_1+\cdots+a_n) \in 4\z_2$.
So we may conclude that a generalized primitive regular $m$-gonal form is of the form of (\ref{3.notuni/4m8}) in this case.

\item[Case 2.] $\left<a_1,\cdots,a_n\right>\otimes \z_2 \not\supset \left<u,u'\right>\otimes \z_2$ for any $u \equiv 1 \text{ and } u'\equiv3 \pmod{4}$.\\
For our assumption, we have that 
$$a_1\eq a_2 \pmod{4} \text{ and } a_3\eq\ \cdots\eq a_n \eq 0 \pmod{4}.$$
Since the diagonal quadraitc forms $$\Delta_{4,\mathbf a}\otimes \z_2=\left<a_1,\cdots,a_n\right>\otimes \z_2 \supset \left<a_1,a_2\right>\otimes \z_2$$
represent every $2$-aidc integer in $a_1+4\z_2$, we have that $\Delta_{m,\mathbf a}(\mathbf x)$ locally represents every non-negative integer which is equivalent with $a_1$ modulo $4$ from (\ref{m4}).
For the regularity of $\Delta_{m,\mathbf a}(\mathbf x)$, $a_1$ would be less than $4$, i.e, $a_1=1$ or $3$.

If $a_1=1$, then since the generalized regular $m$-gonal form $\Delta_{m,\mathbf a}(\mathbf x)$ (locally) represents $1,5,9,13, \cdots$, $(a_2,a_3,a_4)$ would be one of the followings
$$
\begin{array} {lllllll}
&(a_2,4,4),&(a_2,4,8),&(5,8,8),&(5,8,12),&(5,8,16), &(9,4,a_4)\\
\end{array}
$$
where $a_2 \eq 1 \pmod{4}$ and $a_4 \eq 0\pmod{4}$. 
For each candidate for $(a_2,a_3,a_4)$, we could verify that the diagonal quadraitc forms 
$$\Delta_{4,\mathbf a}\otimes \z_2=\left<a_1,\cdots,a_n\right>\otimes \z_2 \supseteq \left<a_1,\cdots,a_4\right>\otimes \z_2$$
represent every $2$-adic integer in $4\z_2$, which yields that the generalized $m$-gonal form $\Delta_{m,\mathbf a}(\mathbf x)$ locally represents every non-negative multiple of $4$.
Therefore for the regularity of $\Delta_{m,\mathbf a}(\mathbf x)$, we have that $(a_3,a_4)=(4,4)$ or $(4,8)$.
And then we may obtain that the non-negative integers which are locally represented by the regular form $\Delta_{m,\mathbf a}(\mathbf x)$ with $a_1=1\eq a_2 \pmod{4},$  $(a_3,a_4) \in \{(4,4),(4,8)\}$, and $a_5 \eq \cdots \eq a_n \eq 0 \pmod{4}$ are
$$\{n \in \N_0| n \not\eq 3 \pmod{4}\}$$
from (\ref{m4}) with $\frac{(m-4)^2}{8(m-2)}(a_1+\cdots+a_n) \in 4\z_2$ because the $2$-adic integers which are represented by the diagonal quadratic forms
$$\Delta_{4,\mathbf a}\otimes \z_2=\left<a_1,\cdots,a_n\right>\otimes \z_2 \supseteq \left<a_1,\cdots,a_4\right>\otimes \z_2$$ 
are $\z_2 \setminus 3+4\z_2$ and $\Delta_{m,\mathbf a}(\mathbf x)$ is $\z_p$-universal for every odd prime $p$.
So we may conclude that a generalized primitive regular $m$-gonal form is of the form of (\ref{4.notuni/4m8}) in this case.
\end{itemize}

{\bf Finally, assume that there are three units of $\z_2$ in $\{a_1,\cdots,a_n\}$ by admitting a recursion.}\\
Without loss of generality, let $a_1\le a_2 \le a_3 \in \z_2 ^{\times}$ and $2 \le a_4\le \cdots \le a_n$ be even.
For our assumption that $\Delta_{m,\mathbf a}(\mathbf x)$ is not universal over $\z_2$, we have that 
\begin{equation} \label{3unit for 2}
a_1\equiv a_2 \equiv a_3 \pmod{4} \text{ and } a_4\equiv \cdots \equiv a_n\equiv 0\pmod{8}.
\end{equation}
Since the diagonal quadratic forms 
$$\left<a_1,\cdots, a_n\right>\otimes \z_2 \supset\left<a_1,a_2,a_3\right> \otimes \z_2$$ represent every prime element of $\z_2$, the generalized $m$-gonal form $\Delta_{m,\mathbf a}(\mathbf x)$ locally represents every non-negative integer in $2+4\z$ by (\ref{m4}).
For the regularity of $\Delta_{m,\mathbf a}(\mathbf x)$ with (\ref{3unit for 2}), we have that 
\begin{equation} \label{3unit for 2'}
(a_1,a_2,a_3,a_4)=(1,1,5,8)
\end{equation} 
for the generalized $m$-gonal form $\Delta_{m,\mathbf a}(\mathbf x)$ locally represents $2,6,10$.
And then we may deduce that the generalized $m$-gonal form $\Delta_{m,\mathbf a}(\mathbf x)$ locally represents every non-negative even integer because the diagonal quadratic forms 
$$\left<a_1,\cdots, a_n\right>\otimes \z_2 \supseteq \left<a_1,a_2,a_3,a_4\right> \otimes \z_2$$ represent every $2$-adic non-unit integer.
Which is a contradiction to the regularity of $\Delta_{m,\mathbf a}(\mathbf x)$ since $\Delta_{m,\mathbf a}(\mathbf x)$ with (\ref{3unit for 2}) and (\ref{3unit for 2'}) does not represent $4$.
So we may conclude that there is no generalized regular $m$-gonal form in this case.
\end{proof}

\vskip 0.8em

\begin{rmk}
Similarly with Remark \ref{rank rmk1}, we may see that there is a generalized regular $m$-gonal form of rank $\ceil{\log_2(m+2)}+1$,  $\ceil{\log_2(m+2)}+2$, and $\ceil{\log_2(m+2)}+1$ of the form of (\ref{1.notuni/4m8}), (\ref{3.notuni/4m8}), and (\ref{4.notuni/4m8}), respectively.
On the other hand, in order for a generalized regular $m$-gonal form $\Delta_{m,\mathbf a}(\mathbf x)$ of the form of (\ref{1.notuni/4m8}), (\ref{3.notuni/4m8}), or (\ref{4.notuni/4m8}) to reprsent every positive integer in $4\N_0$, the rank of $\Delta_{m,\mathbf a}(\mathbf x)$ must be at least $\ceil{\log_2(m-3)}$.
Indeed minimal rank of a generalized regular $m$-gonal form $\Delta_{m,\mathbf a}(\mathbf x)$ of the form of (\ref{1.notuni/4m8}), (\ref{3.notuni/4m8}), and (\ref{4.notuni/4m8}) between $\ceil{\log_2(m-3)}$ and $\ceil{\log_2(m+2)}+1$, $\ceil{\log_2(m-3)}$ and $\ceil{\log_2(m+2)}+2$, and $\ceil{\log_2(m-3)}$ and $\ceil{\log_2(m+2)}+2$, respectively is unknown.
Differently with the case of $m \not\eq 0\pmod{4}$ in Remark \ref{rank rmk1}, there is a chance that the minimal rank of a generalized regular $m$-gonal form is strictly less than the minimal rank of a generalized universal $m$-form for some $m \eq 0 \pmod{4}$.
\end{rmk}

\vskip 1.5em

In Theorem \ref{locuni}, Theorem \ref{notuni3}, and Theorem \ref{notuni2}, we determined every type for generalized primitive regular $m$-gonal form for $m\ge 14$ with $m \not\eq 0\pmod{4}$ and for $m \ge 28$ with $m \eq 8 \pmod{12}$.
They are not $\z_p$-universal for at most one prime $p$.
On the other hand, for $m\ge 28$ with $$m \eq 0\pmod{4} \text{ and } m \not\eq 2\pmod{3},$$ a generalized primitive regular $m$-gonal form could be non-universal over both $\z_2$ and $\z_3$.
In the end, in Theorem \ref{thm m41}, we determine every type for generalized primitive regular $m$-gonal form for $m>28$ with $m\eq 0\pmod{4}$ and $m \not\eq 2\pmod{3}$.

\begin{thm} \label{thm m41}
For $m\ge28$ with $m\eq 0\pmod{4}$ and $m \not\eq 2\pmod{3}$, a generalized primitive regular $m$-gonal form $\Delta_{m,\mathbf a}(\mathbf x)$ is universal or of the form of (\ref{1.notuni/4m8,1m3}), (\ref{3.notuni/4m8,1m3}), (\ref{5.notuni/4m8,1m3}), (\ref{6.notuni/4m8,1m3}), (\ref{7.notuni/4m8,1m3}), (\ref{8.notuni/4m8,1m3}), or (\ref{9.notuni/4m8,1m3}).
\end{thm}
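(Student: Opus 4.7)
The plan is to combine the $\z_3$-analysis of Theorem \ref{notuni3} with the $\z_2$-analysis of Theorem \ref{notuni2}, exploiting the fact that by Proposition \ref{prop1} and Lemma \ref{lem33}(2), for $m\ge 28$ with $m\eq 0\pmod{4}$ and $m\not\eq 2\pmod{3}$, a generalized primitive regular $m$-gonal form $\Delta_{m,\mathbf a}(\mathbf x)$ of rank $n\ge 4$ is $\z_p$-universal for every prime $p\ne 2,3$. Hence the only possible obstructions to (local, and thus global) universality live at $p=2$ and $p=3$, and the entire argument reduces to a joint analysis at these two primes.

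First I would split into four cases according to whether $\Delta_{m,\mathbf a}(\mathbf x)$ is $\z_2$-universal and whether it is $\z_3$-universal. If it is universal at both primes, then it is locally universal, hence universal by regularity. If it is $\z_2$-universal but not $\z_3$-universal, the argument of Theorem \ref{notuni3} applies verbatim (the congruence $m\not\eq 2\pmod 3$ means $m\eq 0$ or $1\pmod 3$, exactly the two subcases treated there), yielding the form (\ref{1.notuni/4m8,1m3}). Symmetrically, if it is $\z_3$-universal but not $\z_2$-universal, the argument of Theorem \ref{notuni2} (valid for $m\eq 0\pmod 4$, independent of the residue modulo $3$) classifies the form as one of (\ref{3.notuni/4m8,1m3}), (\ref{5.notuni/4m8,1m3}), or (\ref{6.notuni/4m8,1m3}).

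The remaining, and main, case is when $\Delta_{m,\mathbf a}(\mathbf x)$ is neither $\z_2$-universal nor $\z_3$-universal. Here I would merge the two local analyses: Remark \ref{rmk1} forces at most two units of $\z_3$ and at most three units of $\z_2$ among $\{a_1,\ldots,a_n\}$, and the primitivity forces at least one unit at each prime. Using the equality (\ref{m4}) together with the fact that $\frac{(m-4)^2}{8(m-2)}\sum a_i$ lies in a controlled coset of both $3\z_3$ and $4\z_2$, one translates each local obstruction into a concrete list of forbidden residue classes modulo $12$. Since the form still represents every non-negative integer in the allowed classes modulo $12$ up through roughly $\frac{m-2}{2}$, one applies Bhargava-style escalation on $(a_1,a_2,a_3,a_4)$ subject to the joint constraints $a_i\in\{1,2\}+3\z$ or $a_i\in 3\z$ (from the $\z_3$-analysis) and $a_i\in\{1,3\}+4\z$ or $a_i\in 4\z$ (from the $\z_2$-analysis). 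Each candidate tuple is then tested against regularity by checking that the diagonal quadratic form $\Delta_{4,\mathbf a}\otimes\z_2$ and $\Delta_{4,\mathbf a}\otimes\z_3$ do not overshoot what the polygonal form can globally represent; when they do, universality at that prime is forced (a contradiction) or a small locally represented integer fails to be globally represented. The surviving configurations must have $a_i\in 12\z$ for all $i$ past the first two (or three) coordinates, and the leading part is pinned down to one of $(3,4)$, $(3,8)$, or $(3,3,4)$, giving exactly the types (\ref{7.notuni/4m8,1m3}), (\ref{8.notuni/4m8,1m3}), (\ref{9.notuni/4m8,1m3}).

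The hard part will be the joint escalation in the doubly non-universal case: the number of candidate leading tuples is substantially larger than in Theorem \ref{notuni2}, and for each candidate one must verify carefully that the precise set of locally represented residues modulo $12$ matches the claimed set $\{0,3,4,7\}$, $\{0,3,8,11\}$, or $\{0,3,4,6,7,10\}$, and that no further restrictions on the tail $(a_4,\ldots,a_n)$ or $(a_5,\ldots,a_n)$ sneak in. The bookkeeping is eased by noting that the $\z_2$- and $\z_3$-parts of the classification are essentially independent once the leading coefficients are fixed, so the final residue set modulo $12$ is the intersection of the residue set modulo $4$ (from Theorem \ref{notuni2}) with the residue set modulo $3$ (from Theorem \ref{notuni3}), which is how the explicit lists in (\ref{7.notuni/4m8,1m3})--(\ref{9.notuni/4m8,1m3}) arise.
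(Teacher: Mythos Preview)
Your proposal is correct and follows essentially the same route as the paper: reduce via Proposition~\ref{prop1} and Lemma~\ref{lem33}(2) to the primes $2$ and $3$, dispose of the three ``singly non-universal'' cases by invoking Theorems~\ref{notuni3} and~\ref{notuni2}, and then analyze the doubly non-universal case by joint escalation. The paper implements your ``joint escalation'' concretely by a case split on the pair (number of $\z_3$-units, number of $\z_2$-units) in $\{a_1,\dots,a_n\}$---with further subcases according to whether the distinguished units coincide---and your observation that the final residue set modulo~$12$ is the CRT intersection of the mod~$4$ and mod~$3$ sets is exactly how the explicit lists in (\ref{7.notuni/4m8,1m3})--(\ref{9.notuni/4m8,1m3}) emerge there.
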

\begin{proof}
We prove the theorem only for $m \equiv4 \pmod{8}$ and $m \equiv1 \pmod{3}$ in this paper.
One may prove the theorem of the other cases (i.e., for $m \eq 16\pmod{24}$, $m \eq 20 \pmod{24}$, or $m \eq 8\pmod{24}$), through similar lengthy processing with this proof. 

For $m\ge 28$ with $m \equiv4 \pmod{8}$ and $m \equiv1 \pmod{3}$, let $\Delta_{m,\mathbf a}(\mathbf x)$ be a generalized primitive regular $m$-gonal form.
By Lemma \ref{lem33}, the generalized regular $m$-gonal form $\Delta_{m,\mathbf a}(\mathbf x)$ is universal over $\z_p$ for every prime $p \not=2,3$.
If $\Delta_{m,\mathbf a}(\mathbf x)$ is universal over $\z_2$ and $\z_3$ too, then the generalized regular $m$-gonal form would be (locally) universal.
Now, we consider a generalized primitive regular $m$-gonal form $\Delta_{m,\mathbf a}(\mathbf x)$ which is not locally universal, i.e., not $\z_2$-universal or not $\z_3$-universal.
If $\Delta_{m,\mathbf a}(\mathbf x)$ is $\z_p$-universal for either $p=2$ or $p=3$, then through same processing with Theorem \ref{notuni3} or Theorem \ref{notuni2}, we may see that the generalized primitive regular $m$-gonal form $\Delta_{m,\mathbf a}(\mathbf x)$ is of the form of (\ref{1.notuni/4m8,1m3}),  (\ref{3.notuni/4m8,1m3}), (\ref{5.notuni/4m8,1m3}), or (\ref{6.notuni/4m8,1m3}).
From now on, we consider generalized primitive regular $m$-gonal form $\Delta_{m,\mathbf a}(\mathbf x)$ which is not universal over both $\z_2$ and $\z_3$.
By Remark \ref{rmk1}, since the $\z_p$-universality of $\Delta_{m,\mathbf a}(\mathbf x)$ coincides with the $\z_p$-universaility of $\Delta_{4,\mathbf a}(\mathbf x)$ for $p=2,3$, we have that there are at most two (resp, three) units of $\z_3$ (resp, $\z_2$) in $\{a_1,\cdots,a_n\}$ by admitting a recursion.
This proof is progressed on each case of the number of units of $\z_2$ and $\z_3$ in $\{a_1,\cdots,a_n\}$.

\vskip 0.8em
{\bf Firstly, we assume that there are one unit of $\z_3$ and one unit of $\z_2$ in $\{a_1,\cdots,a_n\}$ by admitting a recursion.}

Without loss of generality, let $a_1 \in \z_3^{\times}$ and $3 \le a_2\le \cdots \le a_n$ be multiples of $3$.

If $a_1 \in \z_2^{\times}$, then we have that $a_2 \eq \cdots \eq a_n \eq 0 \pmod{6}$ and the generalized regular $m$-gonal form $\Delta_{m,\mathbf a}(\mathbf x)$ (locally) represents every non-negative integer $n$ with $n \eq a_1 \pmod{24}$ because $\Delta_{m,\mathbf a}(\mathbf x)$ represents every $2$-adic (resp, $3$-adic) integer in $a_1+8\z_2$ (resp, $a_1+3\z_3$) over $\z_2$ (resp, over $\z_3$) and is $\z_p$-universal for every $p \not=2,3$.
If $a_1 \not=1$, then for the generalized regular $m$-gonal form $\Delta_{m,\mathbf a}(\mathbf x)$ with $a_2 \eq \cdots \eq a_n \eq 0 \pmod{6}$ (locally) represents $a_1+24, \ a_1+48,$ and $a_1+72(<5(m-3) \le a_1(m-3))$, the generalized $m$-gonal form $a_2P_m(x_2)+\cdots+a_nP_m(x_n)$ with $a_2 \eq \cdots \eq a_n \eq 0 \pmod{6}$ would represent $24,\ 48,$ and $72$.
Which may induce that the generalized $m$-gonal form $\Delta_{m,\mathbf a}(\mathbf x)$ represents every non-negative multiple of $3$ over $\z_3$  from (\ref{m4}) because the diagonal quadratic form
$$\left<a_1,\cdots,a_n\right>\otimes \z_3$$
would represent every non-unit $3$-adic integer.
So we obtain that the integer $n \in [0,23]$ with $n\eq a_1 \pmod{8}$ and $n \eq 0 \pmod{3}$ is locally represented by the generalized $m$-gonal form $\Delta_{m,\mathbf a}(\mathbf x)$. 
Which leads to a contradiction to the regularity of $\Delta_{m,\mathbf a}(\mathbf x)$ because the generalized $m$-gonal form $\Delta_{m,\mathbf a}(\mathbf x)$ with $a_1 \in \z_3^{\times}$ and $a_2 \eq \cdots \eq a_n \eq 0 \pmod{6}$ could not represent the odd multiple of $3$ in $[0,23(<m-3)]$. 

Now consider the case of $a_1=1$.
If $m-3>73$, then we may obtain a contradiction to the regularity of $\Delta_{m,\mathbf a}(\mathbf x)$ through same processing with the above.
It remains to consider the cases of $a_1=1$ with $m-3 \in \{25, 49, 73\}$.
If $m-3 \in \{25, 49, 73\}$, then for the generalized regular $m$-gonal form $\Delta_{m,\mathbf a}(\mathbf x)$ with $a_1=1$ and $a_2 \eq \cdots \eq a_n \eq 0\pmod{6}$ (locally) represents $25$ and $49$, the generalized $m$-gonal form $a_2P_m(x_2)+\cdots+a_nP_m(x_n)$ would represent the prime element $24$ or $48$ (or both of them) of $\z_3$.
Hence there would be at least one prime element of $\z_3$ in $\{a_2, \cdots, a_n\}$.
Which yields that the generalized $m$-gonal form $\Delta_{m,\mathbf a}(\mathbf x)$ represents every non-negative integer $n$ with $n\eq 3\pmod{9}$ or $n \eq 6 \pmod{9}$ over $\z_3$.
So the regular $\Delta_{m,\mathbf a}(\mathbf x)$ would locally represent $33$ or $57$ (that are equivalent with $1=a_1$ modulo $8$).
Which leads to a contradiction to the regularity of the generalized $m$-gonal form $\Delta_{m,\mathbf a}(\mathbf x)$ with $a_1=1$ and $a_2\eq \cdots \eq a_n \eq 0 \pmod{6}$ for $m-3 \in \{25, 49, 73\}$.

Next we assume that $a_1 \in 2\z_2$.
Without loss of generality, let $a_2 \in \z_2^{\times}$ and $6 \le a_3 \le \cdots \le a_n$ be multiples of $6$.
Note that the generalized primitive regular $m$-gonal form $\Delta_{m,\mathbf a}(\mathbf x)$ (locally) represents every non-negative integer $n$ with $n \eq a_1 \pmod{3}$ and $n \eq a_2 \pmod{8}$.
Since there is an integer $n_1 \in [1,23]$ for which $n_1 \eq a_1 \pmod{3}$ and $n_1 \eq a_2 \pmod{8}$, we have that $a_1,a_2 \le n_1 <24$ for the regularity of $\Delta_{m,\mathbf a}(\mathbf x)$.

First suppose that $a_1 \not\eq 0 \pmod{8}$.
Since $n_1 \not=a_1+a_2$, for the generalized regular $m$-gonal form $\Delta_{m,\mathbf a}(\mathbf x)$ (locally) represents $n_1(\le 23)$ and $n_1+24(\le 47)$, $a_3$ and $a_4$ could not exceed $23$ and $47$, respectively.
So we have that $\ord_2(a_3) \le 2$ and $\ord_2(a_4) \le 3$.
If $\ord_2(a_i)=1$ for some $i \not= 2$, then the diagonal quadratic form 
$$\left<a_1,\cdots,a_n\right> \otimes \z_2 = \Delta_{4,\mathbf a}\otimes \z_2$$
would represent every non-unit of $\z_2$, which yields that the generalized $m$-gonal form $\Delta_{m,\mathbf a}(\mathbf x)$ represents every non-negative even integer over $\z_2$.
The generalized regular $m$-gonal form $\Delta_{m,\mathbf a}(\mathbf x)$ would (locally) represent every non-negative integer in
$\{2,8,14,20,\cdots\}$ or $\{4,10,16,22,\cdots\}$ depending on the residue of $a_1$ modulo $3$.
Therefore we have that $a_1 \in \{2,4\}$, $a_3 \in \{6\}$, and $a_4 \in \{6, 12\}$, which yields that the generalized $m$-gonal form $\Delta_{m,\mathbf a}(\mathbf x)$ is $\z_2$-univesal.
For our assumption, we have that $a_i \eq 0 \pmod{4}$ for all $i \not=2$ and then since $\ord_2(a_1)=2$, $\ord_2(a_2) = 0$, $\ord_2(a_3) = 2$, and $\ord_2(a_4) \le 3$,
we may obtain that the generalized $m$-gonal form $\Delta_{m,\mathbf a}(\mathbf x)$ represents every non-negative multiple of $4$ over $\z_2$.
Which yields that the generalized regular $m$-gonal form $\Delta_{m,\mathbf a}(\mathbf x)$ (locally) represents every non-negative integer $n$ which is equivalent with $a_1$ modulo $12$.
For the regularity of $\Delta_{m,\mathbf a}(\mathbf x)$, we obtain that $a_1 \eq 4 \pmod{8}$ is less than $12$, yielding $a_1=4$. 
And we have that $a_2 \eq 3 \pmod{4}$ because otherwise, the generalized $m$-gonal form $\Delta_{m,\mathbf a}(\mathbf x)$ locally represents $1$.
Since the generalized regular $m$-gonal form $\Delta_{m,\mathbf a}(\mathbf x)$ with $a_1=4$, $a_2 \eq 3 \pmod{4}$, and $a_3\eq \cdots \eq a_n \eq 0 \pmod{12}$ (locally) represents $a_1, \ a_1+12, \ a_1+24$, we have that $a_3=12$ and $a_4 \in \{12, 24\}$.
Which yields that the generalized $m$-gonal form $\Delta_{m,\mathbf a}(\mathbf x)$ represents every non-negative multiple of $3$ over $\z_3$ because $\ord_3(a_1)=0$ and $a_2<27$, i.e., $1 \le \ord_3(a_2) \le 2$.
Since the generalized regular $m$-gonal form $\Delta_{m,\mathbf a}(\mathbf x)$ (locally) represents $3$, we have that $a_2=3$.
In sum the results in this case, the generalized regular $m$-gonal form $\Delta_{m,\mathbf a}(\mathbf x)$ have $a_1=4$, $a_2=3$, $a_3=12$, $a_4 \in \{12,24\}$, and $a_5 \eq \cdots \eq a_n \eq 0 \pmod{12}$.
Therefore the generalized regular $m$-gonal form $\Delta_{m,\mathbf a}(\mathbf x)$ (locally) represents every non-negative integer which is equivalent with $0, \ 3, \ 4,$ or $7$ modulo $12$ and that's all because the $2$-adic integers which are represented by 
$$\left<a_1,\cdots,a_n\right> \otimes \z_2 = \Delta_{4,\mathbf a}\otimes \z_2$$
are $a_2+4\z_2 \cup 2\z_2$, the $3$-adic integers which are represented by
$$\left<a_1,\cdots,a_n\right> \otimes \z_3 = \Delta_{4,\mathbf a}\otimes \z_3$$
are $a_1+3\z_3 \cup 3\z_3$, and
$\Delta_{m,\mathbf a}(\mathbf x)$ is $\z_p$-universal for all $ p\not= 2,3$.
Consequently, we may conclude that a generalized primitive regular $m$-gonal form is of the form of (\ref{7.notuni/4m8,1m3}) in this case.

If $a_1\eq 0 \pmod{8}$, then $n_1=a_1+a_2$.
For the generalized regular $m$-gonal form $\Delta_{m,\mathbf a}(\mathbf x)$ with $a_3\eq \cdots \eq a_n \eq 0\pmod{6}$ (locally) represents $n_1+24$ and $n_1+48$, the generalized $m$-gonal form $a_3P_m(x_3)+\cdots+a_nP_m(x_n)$ 
would represent $24$ and $48$.
Which may yield that the generalized regular $m$-gonal form $\Delta_{m,\mathbf a}(\mathbf x)$ (with $a_1<24$ and $a_2\eq 1\pmod{2}$) represents every non-negative multiple of $4$ over $\z_2$,
so we obtain that the generalized regular $m$-gonal form $\Delta_{m,\mathbf a}(\mathbf x)$ (locally) represents every non-negative integer $n$ with $n \eq a_1 \pmod{12}$.
For the regularity of $\Delta_{m,\mathbf a}(\mathbf x)$, we have that $a_1=8$ since $a_1 \le 12$.
For the generalized regular $m$-gonal form $\Delta_{m,\mathbf a}(\mathbf x)$ with $a_1=8, \ a_2 \eq 3 \pmod{6}$, and $a_3\eq \cdots \eq a_n \eq 0 \pmod{6}$ (locally) represents $a_1, \ a_1+12,\ a_1+24$, and $a_1+36$, the generalized $m$-gonal form $a_3P_m(x_3)+\cdots+a_nP_m(x_n)$ represents $12, \ 24,$ and $36$.
If $a_i \eq 2 \pmod{4}$ for some $i$, then the generalized $m$-gonal form $\Delta_{m,\mathbf a}(\mathbf x)$ represents every non-negative even integer over $\z_2$, which leads to a contradiction to the regularity of $\Delta_{m,\mathbf a}(\mathbf x)$ because $\Delta_{m,\mathbf a}(\mathbf x)$ could not represent $2$ which is locally represented by $\Delta_{m,\mathbf a}(\mathbf x)$.
So we have that $a_3 \eq \cdots \eq a_n \eq 0 \pmod{12}$, yielding $a_3=12$, $a_4 \in \{12,24\}$.
And we have that $a_2 \eq 3\pmod{4}$, because otherwise, the generalized $m$-gonal form $\Delta_{m,\mathbf a}(\mathbf x)$ locally represents $5$.
The generalized regular $m$-gonal form $\Delta_{m,\mathbf a}(\mathbf x)$ with $a_1=8$, $a_2\eq 3 \pmod{4}$, $a_3=12$, and $a_4 \in \{12,24\}$ (locally) represents $3$, which yields that $a_2=3$.
In sum the results in this case, the generalized regular $m$-gonal form $\Delta_{m,\mathbf a}(\mathbf x)$ have $a_1=8$, $a_2=3$, $a_3=12$, $a_4 \in \{12,24\}$, and $a_5 \eq \cdots \eq a_n \eq 0 \pmod{12}$.
Therefore the generalized regular $m$-goanl form $\Delta_{m,\mathbf a}(\mathbf x)$ (locally) represents every non-negative integer which is equivalent with $0, \ 3, \ 8,$ or $11$ modulo $12$ and that's all because the $2$-adic integers which are represented by
$$\left<a_1,\cdots,a_n\right> \otimes \z_2 = \Delta_{4,\mathbf a}\otimes \z_2$$
are $a_2+4\z_2 \cup 2\z_2$, the $3$-adic integers which are represented by 
$$\left<a_1,\cdots,a_n\right> \otimes \z_3 = \Delta_{4,\mathbf a}\otimes \z_3$$
are $a_1+3\z_3 \cup 3\z_3$, and 
$\Delta_{m,\mathbf a}(\mathbf x)$ is $\z_p$-universal for all $ p\not= 2,3$.
Consequently, we may conclude that the generalized primitive regular $m$-gonal forms are of the form of (\ref{8.notuni/4m8,1m3}) in this case.


\vskip 0.8em
{\bf Secondly, we assume that there are one unit of $\z_3$ and two units of $\z_2$ in $\{a_1,\cdots,a_n\}$ by admitting a recursion.}

Without loss of generality, let $a_1 \in \z_3^{\times}$ and $3 \le a_2\le \cdots \le a_n$ be multiples of $3$.

First we suppose that $\left<a_1,\cdots,a_n\right>\otimes \z_2 \supset \left<u,u'\right>\otimes \z_2$ for some $u \equiv 1 \text{ and } u'\equiv3 \pmod{4}$.
Since the generalized $m$-gonal form $\Delta_{m,\mathbf a}(\mathbf x)$ represents every non-negative odd integer over $\z_2$, the generalized $m$-gonal form $\Delta_{m,\mathbf a}(\mathbf x)$ would locally represent every integer in $\{1,7,13,19,\cdots\}$ (or resp, $\{5,11,17,23, \cdots\}$) when $a_1 \eq 1\pmod{3}$ (or resp, $a_1 \eq 2 \pmod{3}$).
Hence for the regularity of $\Delta_{m,\mathbf a}(\mathbf x)$, we have that $a_1 \in \{1,2,5\}$ and the generalized $m$-gonal form $a_2P_m(x_2)+\cdots+a_nP_m(x_n)$ with $a_2 \eq \cdots \eq a_n \eq 0 \pmod{3}$ represents $6,12,$ and $18$ (or $3,9,15,$ and $21$).
Which yields that the generalized regular $m$-gonal form $\Delta_{m,\mathbf a}(\mathbf x)$ represents every non-negative multiple of $3$ over $\z_3$ and so (locally) represents every non-negative odd  multiple of $3$, i.e., $3,9,15, \cdots$.
From this, we may induce that the generalized $m$-gonal form $\Delta_{m,\mathbf a}(\mathbf x)$ is $\z_2$-universal.
Therefore in this case, there is no generalized regular $m$-gonal form.

Next we suppose that $\left<a_1,\cdots,a_n\right>\otimes \z_2 \not\supset \left<u,u'\right>\otimes \z_2$ for any $u \equiv 1 \text{ and } u'\equiv3 \pmod{4}$.
In this case, we have that $a_i \not\eq 2 \pmod{4}$ and $a_1' \eq a_2' \pmod{4}$ for two units $a_1' \le a_2'$ of $\z_2$ in $\{a_1,\cdots, a_n\}$.
The generalized $m$-gonal form $\Delta_{m,\mathbf a}(\mathbf x)$ would represent every non-negative integer which is equivalent with $a_1'$ modulo $4$ over $\z_2$.\\
If $a_1 \not\in \z_2^{\times}$, then since the generalized regular $m$-gonal form $\Delta_{m,\mathbf a}(\mathbf x)$ (locally) represents every non-negative integer which is equivalent with $n_1:=a_1+a_1'$ modulo $12$, $a_1+a_1'$ would be less than $12$.
Without loss of generality, let $a_1'=a_2$ and $a_2'=a_3$, then we have that $a_4 \le \cdots \le a_n $ are multiples of $12$.
For the generalized regular $m$-gonal form $\Delta_{m,\mathbf a}(\mathbf x)$ (locally) represents $n_1+12, \ n_1+24, \ n_1+36 , \ n_1+48,$ and $n_1+60$, 
the generalized $m$-gonal form $a_4P_m(x_4)+\cdots+a_nP_m(x_n)$ with $a_4\eq \cdots \eq a_n \eq 0 \pmod{12}$ would represent at least three integers among $12, \ 24, \ 36, \ 48, $ and $60$.
Which may yield that the generalized $m$-gonal form $\Delta_{m,\mathbf a}(\mathbf x)$ represents every $2$-adic integer in $a_2+4\z_2 \cup 2\z_2$ over $\z_2$ and every $3$-adic integer in $a_1+3\z_3 \cup 3 \z_3$ over $\z_3$. 
Hence the generalized regular $m$-gonal form $\Delta_{m,\mathbf a}(\mathbf x)$ would (locally) represent $2$ or $4$, resp, if $a_1 \eq 2 \pmod{3}$ or $a_1 \eq 1 \pmod{3}$, resp.
For our assumption and regularity of $\Delta_{m,\mathbf a}(\mathbf x)$, we have that $a_1=4$ and $a_2 \eq a_3 \eq 3\pmod{4}$.
Since the generalized regular $m$-gonal form $\Delta_{m,\mathbf a}(\mathbf x)$ (locally) represents $3$ and $6$, we have that $a_2=a_3=3$.
In sum the results in this case, the generalized regular $m$-gonal form $\Delta_{m,\mathbf a}(\mathbf x)$ have $a_1=4$, $a_2=3$, $a_3=3$, $a_4=12$, and $a_5 \eq \cdots \eq a_n \eq 0 \pmod{12}$.
Therefore the generalized regular $m$-gonal form $\Delta_{m,\mathbf a}(\mathbf x)$ (locally) represents every non-negative integer which is equivalent with $0, \ 3, \ 4, \ 6, \ 7,$ or $10$ modulo $12$ and that's all because the $2$-adic integers which are represented by
$$\left<a_1,\cdots,a_n\right> \otimes \z_2 = \Delta_{4,\mathbf a}\otimes \z_2$$
are $a_2+4\z_2 \cup 2\z_2$, the $3$-adic integers which are represented by 
$$\left<a_1,\cdots,a_n\right> \otimes \z_3 = \Delta_{4,\mathbf a}\otimes \z_3$$
are $a_1+3\z_3 \cup 3\z_3$, and
$\Delta_{m,\mathbf a}(\mathbf x)$ is $\z_p$-universal for all $ p\not= 2,3$.
Consequently, we may conclude that the generalized primitive regular $m$-gonal forms are of the form of (\ref{9.notuni/4m8,1m3}) in this case. \\
If $a_1  \in \z_2^{\times}$, then the generalized regular $m$-gonal form $\Delta_{m,\mathbf a}(\mathbf x)$ (locally) represents every non-negative integer which is equivalent with $a_1$ modulo $12$.
So $a_1$ would be less than $12$.
Without loss of generality, let $a_2$ be an odd integer and $a_3 \le  \cdots \le a_n $ be multiples of $12$.
For the generalized regular $m$-gonal form $\Delta_{m,\mathbf a}(\mathbf x)$ (locally) represents $a_1+12$, we have that $a_3=12$.
Since the generalized $m$-gonal form $\Delta_{m,\mathbf a}(\mathbf x)$ with $a_1\eq a_2 \pmod{4}$ and $a_3=12$ represents every prime element of $\z_2$ over $\z_2$, the generalized $m$-gonal form $\Delta_{m,\mathbf a}(\mathbf x)$ locally represents $2$ (or resp, $10$) if $a_1 \eq 2\pmod{3}$ (or resp, $a_1 \eq 1\pmod{3}$).
For our assumptions and the regularity of $\Delta_{m,\mathbf a}(\mathbf x)$ with $a_2 \eq \cdots \eq a_n \eq 0 \pmod{3}$, since $\Delta_{m,\mathbf a}(\mathbf x)$ could not represent $2$, we have that $a_1 \eq 1 \pmod{3}$ and $a_1=1$ and $a_2=9$.
Which leads to a contradiction to the regularity of $\Delta_{m,\mathbf a}(\mathbf x)$ because $\Delta_{m,\mathbf a}(\mathbf x)$ could not represent $4$ which is locally represented by $\Delta_{m,\mathbf a}(\mathbf x)$.
Thus in this case, there is no generalized regular $m$-gonal form.

\vskip 0.8em
{\bf Thirdly, we assume that there are one unit of $\z_3$ and three units of $\z_2$ in $\{a_1,\cdots,a_n\}$ by admitting a recursion.}

Without loss of generality, let $a_1\le a_2\le a_3 \in \z_2^{\times}$ and $2 \le a_4\le \cdots \le a_n$ be multiples of $2$.
For our assumption that the generalized $m$-gonal form $\Delta_{m,\mathbf a}(\mathbf x)$ is not $\z_2$-universal, we have that 
\begin{equation}\label{13}a_1\eq a_2 \eq a_3 \pmod{4} \text{ and } a_4 \eq \cdots \eq a_n \eq 0 \pmod{8}.\end{equation}
Since the generalized $m$-gonal form $\Delta_{m,\mathbf a}(\mathbf x)$ represents every prime element in $\z_2$ over $\z_2$, the generalized $m$-gonal form $\Delta_{m,\mathbf a}(\mathbf x)$ would locally represent $2$ or $10$ depending on the residue of the unit of $\z_3$ in $\{a_1,\cdots, a_n\}$.
Since under our assumptions, the generalized $m$-gonal form $\Delta_{m,\mathbf a}(\mathbf x)$ could not represent $2$, the generalized regular $m$-gonal form $\Delta_{m,\mathbf a}(\mathbf x)$ would represent $10$.
For the generalized $m$-gonal form $\Delta_{m,\mathbf a}(\mathbf x)$ with (\ref{13}) represent $10$, $(a_1,a_2,a_3)$ should be $(1,9,a_3)$, $(3,7,a_3)$ or $(3,3,7)$.
The generalized $m$-gonal form $\Delta_{m,\mathbf a}(\mathbf x)$ with $(a_1,a_2,a_3)=(1,9,a_3)$ (or resp, $(a_1,a_2,a_3) \in \{ (3,3,7),(3,7,a_3) \}$) locally represents $13$ (or resp, $19$), which leads to a contradiction to the regularity of $\Delta_{m,\mathbf a}(\mathbf x)$ with (\ref{13}).
Thus, in this case, there is no generalized regular $m$-gonal form.
\vskip 0.8em

{\bf Fourthly, we assume that there are two units of $\z_3$ and one unit of $\z_2$ in $\{a_1,\cdots,a_n\}$ by admitting a recursion.}

Without loss of generality, let $a_1\le a_2 \in \z_3^{\times}$ and $3 \le a_3\le \cdots \le a_n$ be multiples of $3$.
Since the generalized regular $m$-gonal form $\Delta_{m,\mathbf a}(\mathbf x)$ is not $\z_3$-universal, we have that $a_1\eq a_2 \pmod{3}$ from Remark \ref{rmk1}.
For the unit $a_1'$ of $\z_2$ in $\{a_1,\cdots, a_n\}$, the generalized $m$-gonal form $\Delta_{m,\mathbf a}(\mathbf x)$ locally represents every non-negative integer $n$ with $n \eq 2a_1 \pmod{3}$ and $n \eq a_1' \pmod{8}$.
Namely, for the integer $n_1 \in [0,23]$  with $n_1 \eq 2a_1 \pmod{3}$ and $n_1 \eq a_1' \pmod{8}$, the generalized regular $m$-gonal form $\Delta_{m,\mathbf a}(\mathbf x)$ would locally represents every non-negative integer $n$ with $n \eq n_1 \pmod{24}$.
When $(m;a_1)\not=(28;1), (28;2)$ and $(52;1)$, in order for the generalized $m$-gonal form $\Delta_{m,\mathbf a}(\mathbf x)$ with $a_3 \eq \cdots \eq a_n \eq 0 \pmod{3}$ to represent $n_1, \ n_1+24$, and $n_1+48$, there needs at least two prime elements of $\z_3$ in $\{a_3,\cdots ,a_n\}$.
Which leads to a contradiction to our assumption that the generalized $m$-gonal form $\Delta_{m,\mathbf a}(\mathbf x)$ is not $\z_3$-universal.\\
Now we consider the cases of $(m;a_1) \in \{(28;1),(28;2),(52;1)\}$.
If $(m;a_1)=(52;1)$, then since the generalized $m$-gonal form $\Delta_{m,\mathbf a}(\mathbf x)$ with $a_2 \eq 0 \pmod{2}$ and $a_3\eq \cdots \eq a_n \eq 0\pmod{6}$ locally represents $17$, we have that $a_2<17$, i.e., $a_2 \in \{4,10,16\}$ for the regularity of $\Delta_{m,\mathbf a}(\mathbf x)$.
More specifically, the candidates for $(a_2,a_3)$ are $(4,12) , \ (10, 6),$ and $(16,a_3)$.
Since the generalized $m$-gonal form $\Delta_{m,\mathbf a}(\mathbf x)$ with $(a_1,a_2,a_3)=(1,4,12)$ (resp, $(a_1,a_2,a_3)=(1,10,6)$) locally represents $8$ (resp, $2$), for the regularity of $\Delta_{m,\mathbf a}(\mathbf x)$, we have that $a_2=16$.
Since the generalized $m$-gonal form $\Delta_{m,\mathbf a}(\mathbf x)$ with $(a_1,a_2)=(1,16)$ locally represents $25$, the generalized $m$-gonal form $a_3P_m(x_3)+\cdots+a_nP_m(x_n)$ should represent $24$ for the regularity of $\Delta_{m,\mathbf a}(\mathbf x)$.
Because the diagonal quadratic form $\left<a_1,a_2,24\right>\otimes \z_2=\left<1,16,24\right>\otimes \z_2$ represents every $2$-adic integer in $4\z_2^{\times}$, 
the generalized $m$-gonal form $\Delta_{m,\mathbf a}(\mathbf x)$ locally represents $8$ which is a contradiction to the regularity of $\Delta_{m,\mathbf a}(\mathbf x)$ with $(a_1,a_2)=(1,16)$ and $a_3 \eq \cdots \eq a_n \eq 0\pmod{6}$.\\
If $(m;a_1)=(28;1)$, then since the generalized $m$-gonal form $\Delta_{m,\mathbf a}(\mathbf x)$ locally represents $17$, similarly with the case of $(m;a_1)=(52;1)$, for the regularity of the generalized $m$-gonal form $\Delta_{m,\mathbf a}(\mathbf x)$ with $a_2 \eq 0 \pmod{2}$ and $a_3\eq \cdots \eq a_n \eq 0\pmod{6}$, we have that $a_2=16$.
In order for the generalized $m$-gonal form $\Delta_{m,\mathbf a}(\mathbf x)$ with $a_1=1$, $a_2 =16$ and $a_3\eq \cdots \eq a_n \eq 0\pmod{6}$ to represent $1,25,49,73,$ and $97$ which are locally represented by $\Delta_{m,\mathbf a}(\mathbf x)$, $a_3$ and $a_4$ could not exceed $48$ and $96$, respectively, so we have that $\ord_3(a_3)\le 2$ and $\ord_3(a_4)\le 3$.
Which leads to a contradiction to the regularity of $\Delta_{m,\mathbf a}(\mathbf x)$ with $(a_1,a_2)=(1,16)$ and $a_3\eq \cdots \eq a_n \eq 0\pmod{6}$ because the generalized $m$-gonal form $\Delta_{m,\mathbf a}(\mathbf x)$ with $(a_1,a_2)=(1,16)$, $\ord_3(a_3)\le 2$ and $\ord_3(a_4)\le 3$ locally represents $9$.\\
If $(m;a_1)=(28;2)$, then since the generalized regular $m$-gonal form $\Delta_{m,\mathbf a}(\mathbf x)$ with $2=a_1\eq a_2 \pmod{3}$ and $a_3 \eq \cdots \eq a_n \eq 0 \pmod{3}$ (locally) represents the integer $n \in [0,23]$ with $n \eq 1 \pmod{3}$ and $n \eq a_i \pmod{8}$ for the unique odd coefficient $a_i$ in $\{a_1,\cdots,a_n\}$, we have that $a_j \eq -2 \pmod{8}$ for some $j \not=1$ because $\Delta_{m,\mathbf a}(\mathbf x)$ is not $\z_2$-universal.
For the regularity of the $\Delta_{m,\mathbf a}(\mathbf x)$, we have that $\Delta_{m,\mathbf a}(\mathbf x)$ does not represent $1$ over $\z_2$.
Hence the generalized $m$-gonal form $\Delta_{m,\mathbf a}(\mathbf x)$ with $a_1=2$, $a_i \eq 1 \pmod{2}$, and $a_j \eq -2 \pmod{8}$ would represent every non-negative odd integer $n$ with $n \not\eq 1 \pmod{8}$ and even integer $n$ with $n \eq 2 \pmod{4}$ over $\z_2$.
So we obtain that the generalized $m$-gonal form $\Delta_{m,\mathbf a}(\mathbf x)$ locally represents $5,7,$ and $10$.
On the other hand, in order for the generalized $m$-gonal from $\Delta_{m,\mathbf a}(\mathbf x)$ having only one odd coefficient in $\{a_1,\cdots,a_n\}$ with $2=a_1 \eq a_2 \pmod{3}$ and $a_3 \eq \cdots \eq a_n \eq 0\pmod{3}$ to represent $5,7,$ and $10$, $(a_1,a_2,a_3,a_4)$ should be $(2,2,3,6)$ and the generalized $m$-gonal form $\Delta_{m,\mathbf a}(\mathbf x)$ with $(a_1,a_2,a_3,a_4) = (2,2,3,6) $ is $\z_2$-universal.
So we may conclude that in this case, there is no generalized regular $m$-gonal form.

\vskip 0.8em

{\bf Finally, we assume that there are two units of $\z_3$ and two or three units of $\z_2$ in $\{a_1,\cdots,a_n\}$ by admitting a recursion.}

Without loss of generality, let $a_1\le a_2 \in \z_3^{\times}$ and $3 \le a_3\le \cdots \le a_n$ be multiples of $3$.
Since the generalized regular $m$-gonal form $\Delta_{m,\mathbf a}(\mathbf x)$ is not $\z_3$-universal, we have that $a_1\eq a_2 \pmod{3}$ from Remark \ref{rmk1}.
Since there are at least two units of $\z_2$ in $\{a_1,\cdots, a_n\}$, the generalized regular $m$-gonal form $\Delta_{m,\mathbf a}(\mathbf x)$ (locally) represents every non-negative integer $n$ with $n \eq 2a_1 \pmod{3}$ and $n \eq a_i \pmod{4}$ for an odd coefficient $a_i$ in $\{a_1,\cdots,a_n\}$.
Namely, for the integer $n_1 \in [0,11]$  with $n_1 \eq 2a_1 \pmod{3}$ and $n_1 \eq a_i \pmod{4}$, the generalized regular $m$-gonal form $\Delta_{m,\mathbf a}(\mathbf x)$ (locally) represents every non-negative integer $n$ with $n \eq n_1 \pmod{12}$.
When $(m;a_1)\not=(28;1)$, in order for the generalized $m$-gonal form $\Delta_{m,\mathbf a}(\mathbf x)$ with $a_3 \eq \cdots \eq a_n \eq 0 \pmod{3}$ to represent $n_1, \ n_1+12$, and $n_1+24$, there needs at least two prime elements of $\z_3$ in $\{a_3,\cdots ,a_n\}$.
Which implies that the generalized $m$-gonal form $\Delta_{m,\mathbf a}(\mathbf x)$ is $\z_3$-universal.
If $(m;a_1) = (28;1)$, then since the generalized $m$-gonal form $\Delta_{m,\mathbf a}(\mathbf x)$ (which is not $\z_2$-universal) locally represents $5$, we have that $(a_1,a_2)=(1,4)$ for the regularity of $\Delta_{m,\mathbf a}(\mathbf x)$ with $1=a_1 \eq a_2 \pmod{3}$.
The generalized $m$-gonal form $\Delta_{m,\mathbf a}(\mathbf x)$ with $(a_1,a_2)=(1,4)$ locally represents $11$ and $17$ (or resp, $2$) if there is an odd $a_i$ which is equivalent to $3$ modulo $4$ (or resp, otherwise).
The generalized  $m$-gonal form $\Delta_{m,\mathbf a}(\mathbf x)$ with $(a_1,a_2)=(1,4)$ and $a_3 \eq \cdots \eq a_n \eq 0 \pmod{3}$ could not represent $2$.
And in order for the generalized $m$-gonal form $\Delta_{m,\mathbf a}(\mathbf x)$ with $(a_1,a_2)=(1,4)$ and $a_3 \eq \cdots \eq a_n \eq 0 \pmod{3}$, to represent $11$ and $17$, there needs at least two prime elements of $\z_3$ in $\{a_3,\cdots, a_n\}$, which yields that $\Delta_{m,\mathbf a}(\mathbf x)$ is $\z_3$-universal.
Consequently, we may conclude that there is no generalized regular $m$-gonal form in this case.
This completes the proof.
\end{proof}

\begin{rmk}
Through Theorem \ref{locuni}, Theorem \ref{notuni3}, Theorem \ref{notuni2}, and Theorem \ref{thm m41}, we determined every type for generalized primitive regular $m$-gonal form for 
$m \ge 14$ with $m \not\eq0 \pmod{4}$ and $m \ge 28$ with $m \eq 0\pmod{4}$ on the local properties of Lemma \ref{lem33}.
On the other hand, a generalized primitive regular shifted $m$-gonal also satisfy the local properties.
Through similar processings with Theorem \ref{notuni3}, when $m\ge14$ is an odd integer with $m \not\eq 2 \pmod{3}$, one may obtain a regular generalized shifted $m$-gonal form which is not generalized $m$-gonal form
$$\left<1^{(2)},3a_2^{(r_2)},\cdots, 3a_n^{(r_n)}\right>_m$$
which represents every non-negative integer $n$ with $n \not\eq 1 \pmod{3}$.
Like this, one may determine every type for generalized primitive regular shifted $m$-gonal form for $m \ge 14$ with $m \not\eq0 \pmod{4}$ and $m \ge 28$ with $m \eq 0\pmod{4}$, too through modification of Theorem \ref{locuni}, Theorem \ref{notuni3}, Theorem \ref{notuni2} and Theorem \ref{thm m41} or different way.
\end{rmk}

\begin{rmk}
In the end, we consider a regular $m$-gonal form (not generalized $m$-gonal form), i.e., which admits its variables only non-negative integers.
For this, we define {\it $x$-th shifted $m$-gonal number of level $r$} as
\begin{equation}P_m^{(r)}(x):=\frac{m-2}{2}x^2-\frac{m-2-2r}{2}x \end{equation}
for $x \in \N_0$ and a positive integer $r$ with $(r,m-2)=1$.
And we call a weighted sum of shifted $m$-gonal numbers (i.e., which admits only non-negative variables $x_i \in \N_0$)
\begin{equation}\Delta_{m,\mathbf a}^{(\mathbf r)}(\mathbf x):=a_1P_m^{(r_1)}(x_1)+\cdots+a_nP_m^{(r_n)}(x_n)\end{equation} where $\mathbf a \in \N^n$ as {\it shifted $m$-gonal form}.

\begin{prop} \label{prop2}
Let $\Delta_{m,\mathbf a}^{(\mathbf r)}(\mathbf x)$ be a primitive shifted $m$-gonal form.
\begin{itemize}
\item[(1)] $\Delta_{m,\mathbf a}^{(\mathbf r)}(\mathbf x)$ is $\z_p$-universal for any prime $p$ with $(p,2(m-2)) \not= 1$.
\item[(2)] When $m \not\eq 0\pmod{4}$, $\Delta_{m,\mathbf a}^{(\mathbf r)}(\mathbf x)$ is $\z_2$-universal.
\end{itemize}
\end{prop}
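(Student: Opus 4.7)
The argument will parallel that of Proposition \ref{prop1} essentially verbatim, because the notion of $\z_p$-universality concerns solutions in $\z_p^n$ and does not see the $\N_0$-restriction placed on variables in the global definition of a shifted $m$-gonal form. The polynomial $P_m^{(r)}$ itself is unchanged; only its set of globally represented integers changes. I will therefore reduce to single-variable $\z_p$-surjectivity of $P_m^{(r_i)}$ whenever some $a_i$ is a $p$-adic unit (which exists by primitivity), and then set the remaining variables to $0$.

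For part (1), fix a prime $p$ with $p \mid 2(m-2)$, and suppose $a_1 \in \z_p^\times$ after relabeling. It suffices to show that $P_m^{(r_1)}(x_1)=N$ is solvable in $\z_p$ for every $N \in \z_p$. When $p$ is odd (so $p\mid m-2$), reducing modulo $p$ gives
\[
P_m^{(r_1)}(x_1) \;\equiv\; -r_1 x_1 \pmod{p},
\]
because $\tfrac{m-2}{2}\equiv 0$ and $\tfrac{m-2-2r_1}{2}\equiv -r_1\pmod p$; since $(r_1,m-2)=1$, the coefficient $r_1$ is a unit mod $p$, so the reduction is surjective. The derivative $(m-2)x_1-\tfrac{m-2-2r_1}{2}$ at the approximate root reduces to $r_1$, a unit, so Hensel's Lemma (13:9 in \cite{O}) lifts the mod-$p$ solution to $\z_p$.

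For part (2), assume $m \not\equiv 0 \pmod 4$ and take $p=2$ with $a_1 \in \z_2^\times$. I solve $P_m^{(r_1)}(x_1)=N$ via the quadratic formula, obtaining
\[
x_1 \;=\; \frac{(m-2-2r_1)-x_1'}{2(m-2)}, \qquad (x_1')^2 \;=\; (m-2-2r_1)^2 - 8N(m-2).
\]
A short case check shows the radicand is a $\z_2$-square: if $m$ is odd then $m-2-2r_1$ is odd, so the radicand lies in $1+8\z_2$; if $m\equiv 2\pmod 4$ then $(r_1,m-2)=1$ forces $r_1$ odd, so $m-2-2r_1\in 2\z_2^\times$ and the radicand lies in $4(1+8\z_2)$. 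In either case, choosing the sign of $x_1'$ appropriately arranges that $2(m-2)$ divides $(m-2-2r_1)-x_1'$ in $\z_2$, so $x_1\in\z_2$, and the shifted form is $\z_2$-universal.

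The main obstacle is the subtle $2$-adic book-keeping in part (2): Hensel's Lemma does not apply directly at $p=2$ because $P_m^{(r_1)}$ has a double root mod $2$ when $m$ is odd, which is exactly why the hypothesis $m\not\equiv 0\pmod 4$ is needed to guarantee that the discriminant is a $2$-adic square. This is the same case-analysis already carried out in the proof of Proposition \ref{prop1}(2), and it transfers with no change, since nothing in that analysis relies on the generalized-versus-non-generalized distinction.
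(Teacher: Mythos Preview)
Your proposal is correct and takes essentially the same approach as the paper, whose proof simply reads ``This proof is same with the proof of Proposition~\ref{prop1}.'' You correctly observe that $\z_p$-universality concerns solutions in $\z_p^n$ and is therefore insensitive to the $\N_0$-restriction on global variables, and your reproduction of the Hensel/quadratic-formula argument from Proposition~\ref{prop1} (with added $2$-adic detail) matches the paper's intent exactly.
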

\begin{proof}
This proof is same with the proof of Proposition \ref{prop1}.
\end{proof}

For a shifted $m$-gonal form $\Delta_{m,\mathbf a}^{(\mathbf r)}(\mathbf x)$ and an odd prime $p$ with $(p,(m-2))=1$,
we define a set
$$\Lambda_p(\Delta_{m,\mathbf a}^{(\mathbf r)}):=\{\mathbf x \in \N_0^n|\Delta_{m,\mathbf a}^{(\mathbf r)}(\mathbf x+\mathbf y) \eq \Delta_{m,\mathbf a}^{(\mathbf r)}(\mathbf x-\mathbf y) \pmod{p} \text{ for all } \mathbf y \in \N_0^n\}$$
and when $m \eq 0\pmod{4}$, we define a set
$$\Lambda_2(\Delta_{m,\mathbf a}^{(\mathbf r)}):=\{\mathbf x \in \N_0^n|\Delta_{m,\mathbf a}^{(\mathbf r)}(\mathbf x+\mathbf y) \eq \Delta_{m,\mathbf a}^{(\mathbf r)}(\mathbf x-\mathbf y) \pmod{8} \text{ for all } \mathbf y \in \N_0^n\}.$$
Then we have that
$$\Lambda_p(\Delta_{m,\mathbf a}^{(\mathbf r)})=\left\{(\ell(x_1),\cdots,\ell(x_n))|x_i \in \N_0\right\}$$
where $$\ell(x_i)=\begin{cases}px_i+j_i & \text{if } a_i \in \z_p^{\times} \\ x_i & \text{if } a_i \in p \z_p \end{cases}$$ 
for the integer $j_i$ satisfying 
$$\begin{cases}
0 \le j_i \le p-1 \\
j_i \eq \frac{m-2-2r_i}{2(m-2)} \pmod{p}.
\end{cases}$$
Without loss of generality, let $a_1,\cdots,a_t \in \z_p^{\times}$ and $a_{t+1},\cdots,a_n \in p\z_p$, we may see that for $(\ell(x_1),\cdots,\ell(x_n)) \in \Lambda_p(\Delta^{(\mathbf r)}_{m,\mathbf a})$,
\begin{equation}\label{lambdaeq;p''}
\Delta_{m,\mathbf a}^{(\mathbf r)}(\ell(x_1),\cdots,\ell(x_n))=\sum_{i=1}^tp^2a_iP_m^{(r_i')}(x_i)+\sum_{i=t+1}^na_iP_m^{(r_i)}(x_i)+C
\end{equation}
where $r_i'=\frac{(m-2)(p+2j_i)-(m-2-2r_i)}{2p}$ 
and $C=a_1P_m^{(r_1)}(j_1)+\cdots +a_tP_m^{(r_t)}(j_t)$.
Since $r_i'$ is a positive integer which is relatively prime with $m-2$, the quadratic polynomial $\sum_{i=1}^tp^2a_iP_m^{(r_i')}(x_i)+\sum_{i=t+1}^na_iP_m^{(r_i)}(x_i)$ in (\ref{lambdaeq;p''}) is also a shifted $m$-gonal form.
We define the shifted $m$-gonal form in (\ref{lambdaeq;p''}) after 
scaling for some suitable rational integer for its primitivity as
\begin{equation}\label{lambdaeq;p}
\lambda_p\left(\Delta_{m,\mathbf a}^{(\mathbf r)} \right)(\mathbf x):=\frac{1}{p^k}\left(\sum_{i=1}^tp^2a_iP_m^{(r_i')}(x_i)+\sum_{i=t+1}^na_iP_m^{(r_i)}(x_i)\right)
\end{equation}
where $k=\min\{2,\ord_p(a_{t+1}),\cdots,\ord_p(a_{n})\}$.

\vskip1em

And then we may obtain the following lemma similarly with Lemma \ref{lambda is regular}.
\begin{lem} \label{lambda is regular'}
Let $\Delta_{m,\mathbf a}^{(\mathbf r)}(\mathbf x)$ be a primitve regular shifted $m$-gonal form. 
\begin{itemize}
\item[(1)] If $\Delta_{m,\mathbf a}^{(\mathbf r)}(\mathbf x)$ is not $\z_p$-universal for some odd prime $p$, then its $\lambda_p$-transformation
$\lambda_p (\Delta_{m,\mathbf a}^{(\mathbf r)} )(\mathbf x)$ is regular.
\item[(2)] For $m \eq 0 \pmod{4}$, if $\left<a_1,\cdots,a_n\right>\otimes \z_2 \not\supset \left<u,u'\right>\otimes \z_2$ for any $u \eq 1 \pmod{3}$ and $u' \eq 3 \pmod{4}$, then its $\lambda_2$-transformation
$\lambda_2 (\Delta_{m,\mathbf a}^{(\mathbf r)} )(\mathbf x)$ is regular.
\end{itemize}
\end{lem}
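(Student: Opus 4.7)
The plan is to follow the proof of Lemma \ref{lambda is regular} essentially verbatim, with only the minor modification needed to accommodate the restriction of variables to $\N_0$. As before, it suffices to prove that the pre-scaled form $\lambda_p'(\Delta_{m,\mathbf a}^{(\mathbf r)}) := p^k \lambda_p(\Delta_{m,\mathbf a}^{(\mathbf r)})$ is regular. Ordering the coefficients so that $0 = \ord_p(a_1) \le \cdots \le \ord_p(a_n)$, the hypothesis that $\Delta_{m,\mathbf a}^{(\mathbf r)}(\mathbf x)$ is not $\z_p$-universal, combined with Remark \ref{rmk1} (applied to the diagonal quadratic form $\langle a_1, \ldots, a_n \rangle$ and the fact that any ternary unimodular $\z_p$-lattice is $\z_p$-universal for odd $p$), forces the number $t$ of $\z_p$-units among $\{a_1, \ldots, a_n\}$ to satisfy $t \le 2$.

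For $t=1$, I would proceed as follows. Let $n \in \N_0$ be locally represented by $\lambda_p'(\Delta_{m,\mathbf a}^{(\mathbf r)})$. Using the same identity
\[
P_m^{(r_1)}(py + j_1) \;=\; p^2 P_m^{(r_1')}(y) + P_m^{(r_1)}(j_1)
\]
as in the generalized case, I deduce that $n + a_1 P_m^{(r_1)}(j_1)$ is locally represented by $\Delta_{m,\mathbf a}^{(\mathbf r)}$. Invoking the hypothesis that $\Delta_{m,\mathbf a}^{(\mathbf r)}$ is regular in the $\N_0$-sense, there exists $(x_1, \ldots, x_n) \in \N_0^n$ with $\Delta_{m,\mathbf a}^{(\mathbf r)}(\mathbf x) = n + a_1 P_m^{(r_1)}(j_1)$. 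A congruence argument modulo $p$ (using $a_2, \ldots, a_n \in p\z_p$ and $a_1 \in \z_p^\times$) then forces $x_1 \equiv j_1 \pmod p$. The crucial new ingredient, in contrast with the generalized case, is that since $j_1 \in \{0, 1, \ldots, p-1\}$ and $x_1 \in \N_0$, the quotient $y_1 := (x_1 - j_1)/p$ automatically lies in $\N_0$. Substituting $x_1 = py_1 + j_1$ into the representation and cancelling the constant $a_1 P_m^{(r_1)}(j_1)$ from both sides yields $n = \lambda_p'(\Delta_{m,\mathbf a}^{(\mathbf r)})(y_1, x_2, \ldots, x_n)$ with non-negative coordinates, as required.

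The case $t=2$ and part (2) (the case $p=2$ when $m \equiv 0 \pmod 4$) proceed along exactly the same template. In the $t=2$ case, the assumption of non-$\z_p$-universality together with Remark \ref{rmk1} forces $\langle a_1, a_2 \rangle \otimes \z_p$ to be anisotropic; a congruence argument modulo $p$ then simultaneously forces $x_i \equiv j_i \pmod p$ for $i = 1, 2$, and the choice $j_i \in [0, p-1]$ again guarantees $y_i := (x_i - j_i)/p \in \N_0$. For part (2) one performs the analogous case split on $t \in \{1, 2, 3\}$ modulo $8$, using the structural hypothesis on $\langle a_1, \ldots, a_n \rangle \otimes \z_2$ to ensure the congruence argument produces the right residue for each unit coefficient.

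I do not anticipate any genuine obstacle: the adaptation is purely a bookkeeping one, and the only point worth emphasizing is that the restricted range $j_i \in [0, p-1]$ (rather than the symmetric-around-$\frac{m-2-2r_i}{2(m-2)}$ range used in the generalized case) is precisely the ingredient that forces $y_i \in \N_0$ whenever $x_i \in \N_0$. This is the reason the definition of $\Lambda_p$ just above the lemma uses the shifted range; every other step of the proof of Lemma \ref{lambda is regular} is insensitive to whether the underlying variables lie in $\z$ or $\N_0$.
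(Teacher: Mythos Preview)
Your proposal is correct and matches the paper's approach exactly: the paper does not give an independent proof of this lemma but simply states that it follows ``similarly with Lemma \ref{lambda is regular},'' and your write-up supplies precisely that adaptation. Your observation that the choice $j_i \in [0,p-1]$ (in place of the symmetric interval around $\tfrac{m-2-2r_i}{2(m-2)}$) is exactly what forces $y_i = (x_i - j_i)/p \in \N_0$ is the only new point, and it is the reason the paper redefines $\Lambda_p$ with this range just before stating the lemma.
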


Also through a similar processing with Lemma \ref{lem33}, we may obatin the following lemma.

\begin{lem} 
Let $\Delta_{m,\mathbf a}^{(\mathbf r)}(\mathbf x)$ be a primitve regular shifted $m$-gonal form. 
\begin{itemize}
\item[(1)] For $m\ge9$ with $m \not\equiv 0 \pmod{4}$, $\Delta_{m,\mathbf a}^{(\mathbf r)}(\mathbf x)$ of rank $n\ge4$ is $\z_p$-universal for every prime $p \not=3$.
\item[(2)] For $m\ge 12$ with $m \equiv 0 \pmod{4}$, $\Delta_{m,\mathbf a}^{(\mathbf r)}(\mathbf x)$ of rank $n \ge 4$ is $\z_p$-universal for every prime $p \not=2,3$.
\end{itemize}
\end{lem}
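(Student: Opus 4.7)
The plan is to mimic the argument of Lemma \ref{lem33} throughout, replacing the generalized-form ingredients (Proposition \ref{prop1}, Lemma \ref{lambda is regular}, Lemma \ref{other q}) by their restricted analogues (Proposition \ref{prop2} and Lemma \ref{lambda is regular'}). For part (1), let $T = \{p_1, \ldots, p_s\}$ be the set of odd primes $p \nmid (m-2)$ for which $\Delta_{m,\mathbf a}^{(\mathbf r)}$ fails to be $\z_p$-universal. If $T \subseteq \{3\}$ the claim is immediate; otherwise pick $p = p_s \ne 3$ and apply the composite $\widetilde{\lambda_{p_{s-1}}} \circ \cdots \circ \widetilde{\lambda_{p_1}}$ to produce a primitive regular shifted $m$-gonal form $\Delta_{m,\mathbf a'}^{(\mathbf r')}$ that is $\z_q$-universal for every $q \ne p$ but not $\z_p$-universal, and aim for a contradiction by comparing local and global representation counts.

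The one essential modification from Lemma \ref{lem33} is the global-representation bound. In the restricted setting, for each fixed $(m;r)$ and threshold $K$, the number of $x \in \N_0$ with $P_m^{(r)}(x) \le K$ is only $\sqrt{2K/(m-2)} + O(1)$, roughly half the generalized-case count (since negative $x$ no longer contribute). Hence the number of integers in $[0,K]$ globally represented by $\Delta_{m,\mathbf a'}^{(\mathbf r')}$ is bounded by $\prod_{i=1}^{t} \left( \sqrt{2K/(a_i'(m-2))} + 1 \right)$, roughly half the bound used in Lemma \ref{lem33}. Matched against the unchanged local count of $p-1$ or $(p-1)/2$ residues modulo $p$, this halved global bound pushes the decisive inequality to smaller $m$, explaining why the thresholds drop from $m \ge 14$ and $m \ge 28$ to $m \ge 9$ and $m \ge 12$ respectively.

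With this adjustment, the bounding steps of Lemma \ref{lem33} --- first $p_s < m/2$ (resp.\ $p_s < (m+1)/2$), then $p_s < 7$ (resp.\ $p_s < 11$), and finally $p_s = 5$ (resp.\ $p_s \in \{5,7\}$) excluded --- carry over with only numerical adjustments. Part (2) runs in parallel, with the additional bookkeeping that $\widetilde{\lambda}(\Delta_{m,\mathbf a}^{(\mathbf r)})$ may fail $\z_2$-universality while still satisfying (\ref{lambda2 ex'qf}); the coset counting of Remark \ref{rmk 2rep} is unaffected by restricting to non-negative variables since it is purely $2$-adic in nature.

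The main obstacle I anticipate is the final subcase, where one must rule out the handful of remaining candidate tuples $(a_1', a_2'; r_1', r_2')$ (and $(a_3'; r_3')$) that locally exhaust enough residues modulo $p$ while globally missing a small integer. In the restricted setting each $P_m^{(r_i')}$ attains only the single value $r_i'$ in the range $[1, m-3]$, rather than both $r_i'$ and $m-2-r_i'$ as in the generalized case, so the candidate list shrinks but the inequalities analogous to (\ref{T5}) and (\ref{T5'}) must be reworked to confirm that no $(m; a_1', r_1')$ with $m \ge 9$ survives. A parallel finite check handles the $2$-adic obstructions for $p \in \{5,7\}$ in part (2).
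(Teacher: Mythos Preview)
Your proposal is correct and matches the paper's approach: the paper itself gives no detailed proof here, stating only that the lemma follows ``through a similar processing with Lemma \ref{lem33}'', and your sketch correctly identifies the key modification---that restricting to $x_i \in \N_0$ roughly halves the global-representation counts per coordinate, which is what allows the thresholds on $m$ to drop. Your anticipation that the final finite checks (the analogues of (\ref{T5}) and (\ref{T5'})) must be reworked with the shorter candidate lists is also on target.
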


Finally, we may determine every type for regular $m$-gonal form, too as follows.

\begin{thm}
\begin{itemize}
\item[(1)] For $m\ge9$ with $m \not\equiv 0 \pmod{4}$ and $m \eq 2 \pmod{3}$, a primitive regular $m$-gonal form $\Delta_{m,\mathbf a}(\mathbf x)$ of rank $n\ge4$ is universal.
\item[(2)] For $m\ge9$ with $m \not\equiv 0 \pmod{4}$ and $m \not\eq 2 \pmod{3}$, a primitive regular $m$-gonal form $\Delta_{m,\mathbf a}(\mathbf x)$ of rank $n\ge4$ is universal or of the form of (\ref{1.notuni/3}).
\item[(3)] For $m\ge12$ with $m \equiv 0 \pmod{4}$ and $m \eq 2 \pmod{3}$, a primitive regular $m$-gonal form $\Delta_{m,\mathbf a}(\mathbf x)$ of rank $n\ge4$ is universal or of the form of (\ref{1.notuni/4m8}), (\ref{3.notuni/4m8}), or (\ref{4.notuni/4m8}).
\item[(4)] For $m\ge12$ with $m \equiv 0 \pmod{4}$ and $m \not\eq 2 \pmod{3}$, a primitive regular $m$-gonal form $\Delta_{m,\mathbf a}(\mathbf x)$ of rank $n\ge4$ is universal or of the form of (\ref{1.notuni/4m8,1m3}), (\ref{3.notuni/4m8,1m3}), (\ref{5.notuni/4m8,1m3}), (\ref{6.notuni/4m8,1m3}), (\ref{7.notuni/4m8,1m3}), (\ref{8.notuni/4m8,1m3}), or (\ref{9.notuni/4m8,1m3}).

\end{itemize}
\end{thm}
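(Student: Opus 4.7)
The plan is to run the same four-part case analysis as in the proofs of Theorems \ref{locuni}, \ref{notuni3}, \ref{notuni2}, and \ref{thm m41}, substituting Proposition \ref{prop2}, Lemma \ref{lambda is regular'}, and the preceding local-universality lemma for their generalized counterparts. These results have the same statements as Proposition \ref{prop1}, Lemma \ref{lambda is regular}, and Lemma \ref{lem33}, so every step that is purely local (working over $\z_p$) transfers verbatim; in particular, the decomposition (\ref{m4}) relating $\Delta_{m,\mathbf a}$ to the diagonal quadratic form $\Delta_{4,\mathbf a}$ holds over $\z_p$ without reference to the domain of the variables.

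For part (1), the cited local lemma and Proposition \ref{prop2} show that a primitive regular $m$-gonal form of rank $n\ge 4$ with $m \ge 9$, $m \not\eq 0 \pmod{4}$, $m \eq 2 \pmod{3}$ is locally universal, hence by regularity globally universal. For parts (2), (3), and (4), I would imitate the escalating arguments of Theorems \ref{notuni3}, \ref{notuni2}, and \ref{thm m41} by splitting into subcases according to the number of $\z_2$-units and $\z_3$-units among $\{a_1,\ldots,a_n\}$. In each subcase the local constraints (from (\ref{m4}) and the failure of $\z_2$- or $\z_3$-universality), together with the demand that the form globally represent the small integers it locally represents, pin down $(a_1,\ldots,a_4)$ to the same finite candidate list as in the generalized setting. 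The reason the list is unchanged is that the escalation is driven by integers in $[0,m-3]$, and on this interval $\{P_m(x):x\in\N_0\}$ agrees with $\{P_m(x):x\in\z\}$ because $P_m(-x) \ge m-3$ for $x \ge 1$.

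The main obstacle is the sufficiency direction: each listed form in (\ref{1.notuni/3})--(\ref{9.notuni/4m8,1m3}) must be regular when its variables are restricted to $\N_0$, not merely to $\z$. A generalized representation may legitimately use a negative $x_i$, so one must verify that the positive-variable representations alone still cover every locally represented non-negative integer. For each listed type this is handled by noting that the tail $\left<3a_2',\ldots,3a_n'\right>_m$ (respectively the $4a_i'$-tails, or the specific small tails in (\ref{7.notuni/4m8,1m3})--(\ref{9.notuni/4m8,1m3})) is postulated to represent the full arithmetic progression of non-negative integers it is required to, while the leading coordinates need only realize values $P_m(0)=0$ and $P_m(1)=a_1$ (and $P_m(1)$ for the other small units) to recover each locally represented residue class; since $0,1 \in \N_0$, the negative-variable values of $P_m$ are never needed.

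Finally, the range of $m$ extends downward to $m\ge 9$ and $m\ge 12$ (rather than $14$ and $28$) because the improved Lemma stated in the remark gives $\z_p$-universality for all $p\ne 3$ (resp.\ $p\ne 2,3$) already under these weaker hypotheses; the only place this matters is in guaranteeing that the reduction via $\widetilde{\lambda_p}$ produces a form isolated at $\{3\}$ or $\{2,3\}$, after which the escalation proceeds identically.
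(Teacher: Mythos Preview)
Your approach matches the paper's: the paper gives no explicit proof for this theorem, merely stating it after recording the non-generalized analogues of Proposition~\ref{prop1}, Lemma~\ref{lambda is regular}, and Lemma~\ref{lem33}, and writing ``Finally, we may determine every type for regular $m$-gonal form, too as follows.'' So rerunning the proofs of Theorems~\ref{locuni}--\ref{thm m41} with the substitute lemmas is exactly what is intended.

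Two small corrections. First, your ``main obstacle'' is not an obstacle: the theorem asserts only a \emph{necessary} condition for regularity. The clause ``which represents every non-negative integer in $a_1+3\N_0\cup 3\N_0$'' attached to (\ref{1.notuni/3}) is not an extra hypothesis to be verified, but a consequence of regularity together with the local description of what is represented over $\z_3$; the paper never claims that every form of the listed shapes is regular (cf.\ Remark~\ref{rank rmk1} following Theorem~\ref{notuni3}). So there is no sufficiency direction to check. Second, your interval claim is slightly off: since $P_m(-1)=m-3$, the generalized and non-generalized value sets differ precisely at $m-3$. The correct observation is that for $x\in\N_0$ the values of $P_m(x)$ below $m$ are just $\{0,1\}$, so when the escalation asks the form to represent small targets the only contribution from any $a_iP_m(x_i)$ is $0$ or $a_i$ in either setting; where the paper invokes the bound $a_1(m-3)=a_1P_m(-1)$, the non-generalized bound $a_1m=a_1P_m(2)$ is larger, so the tail is required to represent at least as many small integers and the escalation constraints are at least as strong. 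Hence the candidate lists coincide.
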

\end{rmk}

\end{document}